\title[Automorphic bundles on Zip-schemes]{Automorphic vector bundles with global sections on $\GZip^\Zcal$-schemes}
\newtheorem{theorem}[subsubsection]{Theorem}
\newtheorem{lemma}[subsubsection]{Lemma}
\newtheorem{conjecture}[subsubsection]{Conjecture}
\newtheorem{corollary}[subsubsection]{Corollary}
\newtheorem{definition}[subsubsection]{Definition}
\newtheorem{question}[subsubsection]{Question}
\newtheorem{proposition}[subsubsection]{Proposition}
\newtheorem{qAlph}{Question}
\newtheorem{thmAlph}[qAlph]{Theorem}
\newtheorem{conjAlph}[qAlph]{Conjecture}
\theoremstyle{remark}
\newtheorem{rmk}[subsubsection]{Remark}
\numberwithin{equation}{subsection}
 \noindent \texttt{wushijig@gmail.com} \par
\noindent \texttt{jeanstefan.koskivirta@gmail.com}
\newcommand{\GZip}{\mathop{\text{$G$-{\tt Zip}}}\nolimits}
\newcommand{\GtildeZip}{\mathop{\text{$\tilde{G}$-{\tt Zip}}}\nolimits}
\newcommand{\GLnZip}{\mathop{\text{$GL(n)$-{\tt Zip}}}\nolimits}
\newcommand{\GF}{\mathop{\text{$G$-{\tt ZipFlag}}}\nolimits}
\DeclareMathOperator{\Gal}{Gal}
\DeclareMathOperator{\Sbt}{Sbt}
\newskip\procskipamount
\newskip\interskipamount
\newskip\refskipamount
\newcommand{\procskip}{\vskip\procskipamount}
\newcommand{\interskip}{\vskip\interskipamount}
\newcommand{\refskip}{\vskip\refskipamount}
\newcommand{\procbreak}{\par
   \ifdim\lastskip<\procskipamount\removelastskip
   \penalty-100
   \procskip\fi
   \noindent\ignorespaces}
\newcommand{\titlebreak}{\par%
\ifdim\lastskip<\interskipamount\removelastskip%
\penalty10000%
\interskip\fi%
\noindent}%
\newcommand{\interbreak}{\par%
\ifdim\lastskip<\interskipamount\removelastskip%
\penalty-100%
\interskip\fi%
\noindent\ignorespaces}%
\newcommand{\refbreak}{\par%
\ifdim\lastskip<\refskipamount\removelastskip%
\penalty-100%
\refskip\fi%
\noindent\ignorespaces}%
\newcounter{listcounter}
\newcounter{deflistcounter}
\newcounter{equivcounter}
\newskip{\itemsepamount}
\newskip{\topsepamount}
\newenvironment{assertionlist}{%
  \begin{list}
    {\upshape (\arabic{listcounter})}
    {\setlength{\leftmargin}{18pt}
     \setlength{\rightmargin}{0pt}
     \setlength{\itemindent}{0pt}
     \setlength{\labelsep}{5pt}
     \setlength{\labelwidth}{13pt}
     \setlength{\listparindent}{\parindent}
     \setlength{\parsep}{0pt}
     \setlength{\itemsep}{\itemsepamount}
     \setlength{\topsep}{\topsepamount}
     \usecounter{listcounter}}}
  {\end{list}}
\newenvironment{definitionlist}{%
  \begin{list}
    {\upshape (\alph{deflistcounter})}
    {\setlength{\leftmargin}{18pt}
     \setlength{\rightmargin}{0pt}
     \setlength{\itemindent}{0pt}
     \setlength{\labelsep}{5pt}
     \setlength{\labelwidth}{13pt}
     \setlength{\listparindent}{\parindent}
     \setlength{\parsep}{0pt}
     \setlength{\itemsep}{\itemsepamount}
     \setlength{\topsep}{\topsepamount}
     \usecounter{deflistcounter}}}
  {\end{list}}
\newenvironment{equivlist}{%
  \begin{list}
    {\upshape (\roman{equivcounter})}
    {\setlength{\leftmargin}{18pt}
     \setlength{\rightmargin}{0pt}
     \setlength{\itemindent}{0pt}
     \setlength{\labelsep}{5pt}
     \setlength{\labelwidth}{13pt}
     \setlength{\listparindent}{\parindent}
     \setlength{\parsep}{0pt}
     \setlength{\itemsep}{\itemsepamount}
     \setlength{\topsep}{\topsepamount}
     \usecounter{equivcounter}}}
  {\end{list}}
\newcommand{\Acal}{{\mathcal A}}
\newcommand{\Bcal}{{\mathcal B}}
\newcommand{\Mcal}{{\mathcal M}}
\newcommand{\Ncal}{{\mathcal N}}
\newcommand{\Ocal}{{\mathcal O}}
\newcommand{\Pcal}{{\mathcal P}}
\newcommand{\Xcal}{{\mathcal X}}
\newcommand{\Ycal}{{\mathcal Y}}
\newcommand{\Zcal}{{\mathcal Z}}
\renewcommand{\AA}{\mathbf{A}}
\newcommand{\CC}{\mathbf{C}}
\newcommand{\FF}{\mathbf{F}}
\newcommand{\GG}{\mathbf{G}}
\newcommand{\NN}{\mathbf{N}}
\newcommand{\QQ}{\mathbf{Q}}
\newcommand{\RR}{\mathbf{R}}
\newcommand{\XX}{\mathbf{X}}
\newcommand{\ZZ}{\mathbf{Z}}
\newcommand{\ee}{\mathbf{e}}
\DeclareMathOperator{\Pic}{Pic}
\newcommand{\Lscr}{{\mathscr L}}
\newcommand{\Vscr}{{\mathscr V}}
\newcommand{\cent}{{\rm Cent}}
\newcommand{\po}{\mathbf P^1}
\newcommand{\fp}{\mathbf F_p}
\newcommand{\fpbar}{\overline{\mathbf F}_p}
\newcommand{\ei}{\mathbf e _i}
\newcommand{\ej}{\mathbf e_j}
\newcommand{\gx}{(\mathbf G, \mathbf X)}
\newcommand{\ad}{{\rm ad}}
\newcommand{\Th}{{\rm Th.}}
\newcommand{\Ths}{{\rm Ths.}}
\newcommand{\Rmk}{{\rm Rmk.}}
\newcommand{\Cor}{{\rm Cor.}}
\newcommand{\Conj}{{\rm Conj.}}
\newcommand{\Def}{{\rm Def.}}
\newcommand{\Prop}{{\rm Prop.}}
\newcommand{\loccit}{{\em loc.\ cit. }}
\newcommand{\loccitn}{{\em loc.\ cit.}}
\newcommand{\cf}{{\em cf. }}
\newcommand{\ie}{i.e.,\ }
\newcommand{\eg}{e.g.,\ }
\newcommand{\diag}{{\rm diag}}
\newcommand{\tor}{{\rm tor}}
\newcommand{\dR}{{\rm dR}}
\DeclareMathOperator{\Div}{div}
\DeclareMathOperator{\Stab}{Stab}
\DeclareMathOperator{\Res}{Res}
\author{Wushi Goldring, Jean-Stefan Koskivirta}
\date{October 1, 2017}
\begin{document}

\begin{abstract}
A general conjecture is stated on the cone of automorphic vector bundles admitting nonzero global sections on schemes endowed with a smooth, surjective morphism to a stack of $G$-zips of connected-Hodge-type; such schemes should include all Hodge-type Shimura varieties with hyperspecial level. We prove our conjecture for groups of type $A_1^n$, $C_2$,  and $\fp$-split groups of type $A_2$ (this includes all Hilbert-Blumenthal varieties and should also apply to Siegel modular threefolds and Picard modular surfaces). An example is given to show that our conjecture can fail for zip data not of connected-Hodge-type.
\end{abstract}

\maketitle

\section*{Introduction}

This paper develops a particular aspect of our general program launched in \cite{Goldring-Koskivirta-Strata-Hasse}. Recall that the aim of the program is to connect (A) {\em Automorphic Algebraicity}, (B) {\em $G$-Zip Geometricity} and (C) {\em Griffiths-Schmid Algebraicity}. This paper zooms in on (B); the basic question which guides us is: 
\begin{qAlph} \label{q-arbitrary-to-gzip} Let $\Zcal$ be a zip datum with underlying group $G$. Assume $X$ is a scheme, endowed with a smooth surjective morphism $\zeta:X \to\GZip^{\Zcal}$. To what extent is the global geometry of $X$ controlled by the stack $\GZip^{\Zcal}$ and the map $\zeta$ ?
\end{qAlph}

We briefly describe a general framework underlying Question~\ref{q-arbitrary-to-gzip}, before specializing to a concrete instance of it which is studied here.
Let $G$ be a connected, reductive $\fp$-group and $\mu \in X_*(G)$ a cocharacter. By the work of Moonen-Wedhorn for $G=GL(n)$ \cite{Moonen-Wedhorn-Discrete-Invariants} and Pink-Wedhorn-Ziegler for general $G$ \cite{PinkWedhornZiegler-F-Zips-additional-structure,Pink-Wedhorn-Ziegler-zip-data}, the pair $(G,\mu)$ gives rise to a zip datum $\Zcal$ and a stack $\GZip^\Zcal$ (see \S\ref{sec-review}). To give some sense of what this stack is, and how it is related to more classical objects, let us recall two historical sources of motivation for it.

One source of motivation comes from Hodge theory; the other from the theory of Shimura varieties and their Ekedahl-Oort (EO) stratification in characteristic $p>0$. Of course these two sources are not disjoint from one another, but they do help to shed light on two different points of view concerning $G$-Zips and their applications. As explained below, the connection with Hodge theory shows that the theory of $G$-Zips can be applied to a very {\em general} class of schemes in characteristic $p$, in the same way that classical Hodge theory applies (at least) to smooth complex projective varieties. By contrast, the connection with the EO stratification of Shimura varieties, also recalled below, gives particularly rich and {\em special} examples of $G$-Zips and is fruitful for applications to automorphic forms (\cf our previous papers \cite{Goldring-Koskivirta-Strata-Hasse,Goldring-Koskivirta-zip-flags}).

\subsection*{Motivation I: Hodge theory} 
\label{sec-motiv-hodge}

As already observed in the introduction to the Moonen-Wedhorn paper \cite{Moonen-Wedhorn-Discrete-Invariants}, the stack $\GZip^\Zcal$ is a characteristic $p$ analogue of a period domain, or more generally a Mumford-Tate domain \cite{Green-Griffiths-Kerr-Mumford-Tate-Domains-book}. 
Suppose $f:Y\to X$ is a proper, smooth morphism of schemes in characteristic $p$ satisfying the conditions of Deligne-Illusie: $\dim(Y/X)<p$ and $f$ admits a lift $\tilde f: \tilde Y \to \tilde X$ with $\tilde X$ flat over $\ZZ/p^2\ZZ$. Then the Hodge-de Rham spectral sequence for $Y/X$ degenerates at $E_1$ and the conjugate spectral sequence degenerates at $E_2$, giving rise to the Hodge and conjugate filtrations respectively. Given $i\geq 0$, the parabolic subgroups $P$ and $Q$ stabilizing the Hodge and conjugate filtrations of $H^i_{\dR}(Y/X)$ give rise to a zip datum $\Zcal$. Thus $H^i_{\dR}(Y/X)$ is a $GL(n)$-Zip of type $\Zcal$, where $n$ is the rank of $H^i_{\dR}(Y/X)$. It yields a map $\zeta:X \to \GLnZip^\Zcal$  \cite[\S6]{Moonen-Wedhorn-Discrete-Invariants}.   

The map $\zeta$ should be thought of as an analogue of the period map associated to a variation of Hodge structure (VHS) over a smooth, projective $\CC$-scheme. This analogy will be the subject of forthcoming work with Y. Brunebarbe. The analogue of our guiding Question~\ref{q-arbitrary-to-gzip} in Hodge theory is one that has played a central role in algebraic geometry for the past 150 or so years, going back (at least) to the work of Abel and Riemann on periods of abelian integrals: To what extent does a period map control the global geometry of the base of a VHS?

If the Hodge and conjugate filtrations preserve certain tensors in $H^i_{dR}(Y/X)^{\langle\otimes\rangle}$, then the above $GL(n)$-Zip arises from a $G$-Zip, where $G$ is the group stabilizing the tensors. For example, Moonen-Wedhorn explain how, when $Y/X$ is a family of $K3$ surfaces, the primitive part of $H^2_{dR}(Y/X)$ is naturally an $SO(2,19)$-Zip. They use this to provide a unified framework for previous works on stratifications of families of K3 surfaces by Artin, Katsura-van der Geer and Ogus (see \cite{Moonen-Wedhorn-Discrete-Invariants,Ogus-height-strata-K3} and the references therein).  In classical Hodge theory, the above brings to mind the Mumford-Tate group. We hope to return to the question of `Mumford-Tate group for $G$-Zips' in future work.

\subsection*{Motivation II: Shimura varieties} 
\label{sec-motiv-shimura}

Let us turn now to motivation stemming from the theory of Shimura varieties and the EO stratification. Let $X$ be the special fiber of the Kisin-Vasiu model of a Hodge-type Shimura variety at a place of good reduction, attached to a Shimura datum $(\GG,\XX)$ and a hyperspecial level at $p$. Write $G:=G_{\ZZ_p}\times \FF_p$, where $G_{\ZZ_p}$ is a reductive $\ZZ_p$-model of $\GG_{\QQ_p}$. If $\Acal/X$ is a universal abelian scheme corresponding to some symplectic embedding of $\gx$, then $H^1_{\dR}(\Acal/X)$ is naturally a $G$-Zip; the classifying map $\zeta:X\to \GZip^\Zcal$ is smooth by  Zhang \cite{ZhangEOHodge} and surjective by Nie \cite{Nie-Newton-EO} and Kisin-Madapusi-Shin \cite{Kisin-Honda-Tate-theory-Shimura-varieties}. 

The EO stratification of $X$ is given by the fibers of $\zeta$. When $X$ is of PEL-type (resp. Siegel type) this recovers the earlier definition of the EO stratification by Moonen \cite{moonen-gp-schemes} (resp. Ekedahl-Oort \cite{Oort-stratification-moduli-space-abelian-varieties}). Even in these special cases the scheme-theoretic structure of the strata and stratification property is most easily seen via the $G$-Zip approach. 

Specializing Question~\ref{q-arbitrary-to-gzip} to the Shimura variety $X$ gives:

\begin{qAlph} To what extent is the global geometry of the Shimura variety $X$ controlled by the stack $\GZip^{\Zcal}$ and the morphism $\zeta$ \label{q-geom-shimura-from-gzip}?
\end{qAlph}

Since the underlying set of the stack $\GZip^\Zcal$ is just a finite set of points, it may initially appear to the reader that a pair $(\GZip^\Zcal, \zeta)$ will capture little of the geometry of $X$. This would suggest that the answer to both Questions~\ref{q-arbitrary-to-gzip} and~\ref{q-geom-shimura-from-gzip} should be "minimal" and that $(\GZip^\Zcal, \zeta)$ retains much less geometric information than a period map in classical Hodge theory.  

One of the key aims of this paper is to provide evidence to the contrary.
The previous papers \cite{Koskivirta-Wedhorn-Hasse,Koskivirta-compact-hodge,Goldring-Koskivirta-Strata-Hasse,Goldring-Koskivirta-zip-flags} already deduced nontrivial information about the global geometry of $X$ from a study of group-theoretical Hasse invariants on $\GZip^{\Zcal}$ (and the closely related stacks of zip flags). For example, we showed by this method that the EO stratification of $X$ is uniformly principally pure \cite[\Cor~3.1.3]{Goldring-Koskivirta-Strata-Hasse} and that all EO strata of the minimal compactification $X^{\min}$ are affine (\loccitn, \Th~3.3.1).  
\subsection*{The global sections cone}
\label{sec-intro-global-sect}
We return to the general setting of Question~\ref{q-arbitrary-to-gzip}: Set $\Xcal:=\GZip^\Zcal$ and consider a characteristic $p$ scheme $X$ equipped with a morphism $\zeta:X \to \Xcal$. 

 This note is concerned with an example where some global geometry of $X$ may be understood purely in terms of $\Xcal$: the question of which automorphic vector bundles $\Vscr_X(\lambda)$ admit global sections on $X$. Our forthcoming joint work with Stroh and Brunebarbe \cite{Brunebarbe-Goldring-Koskivirta-Stroh-ampleness} will study the closely related question of which $\Vscr_X(\lambda)$ are ample on $X$ and on its partial flag spaces (see \S\ref{sec-flag-space} below and \cite{Goldring-Koskivirta-zip-flags}).

Let $L$ be the Levi subgroup of $G$ given by $\Zcal$  and choose a Borel pair $(B,T)$ appropriately adapted to $\Zcal$ (see \S\ref{review}). A $B \cap L$-dominant character $\lambda \in X^*(T)$ gives rise to a vector bundle $\Vscr_{\Xcal}(\lambda)$ on $\Xcal$ (\S\ref{sec-auto-vector-bundles}). 
 Put $\Vscr_X(\lambda):=\zeta^*(\Vscr_\Xcal(\lambda))$. 
 
 We call the $\Vscr_X(\lambda)$ the \emph{automorphic vector bundles} associated to $\zeta$.
 When $X$ is the special fiber of a Hodge-type Shimura variety, the $\Vscr_X(\lambda)$ recover the usual automorphic vector bundles on $X$.  For a general $X$, it is a priori unclear what, if any, relationship the $\Vscr_X(\lambda)$ bear to automorphic forms. 

Let $C_\Xcal$ (resp. $C_X$) denote the (saturated) cones of $\lambda\in X^*(T)$ such that $\Vscr(n\lambda)$ (resp. $\Vscr_X(n\lambda)$) admits a nonzero global section for some $n\geq 1$ (\S\ref{subsec-cones}). 
The inclusion $ C_\Xcal  \subset C_X $ holds in general simply by pulling back sections. 
Below we propose a conjecture that, under certain hypotheses, the global sections cones of $\Xcal$ and $X$ are equal. This is surprising because one expects bundles on $X$ to admit many more sections than bundles on $\Xcal$; nevertheless our conjecture predicts that the mere existence of sections is to a large extent controlled by $\Xcal$. 

Our approach to the conjecture, as well as one of the hypotheses, will be in terms of the stack of zip flags $\Ycal \to \Xcal$ and the flag space $Y=X \times_\Xcal \Ycal$, both recalled in \S\ref{sec-flag-space}. The stack $\Ycal$ admits a stratification parameterized by the Weyl group of $T$ in $G$; if $\zeta$ is smooth then the same is true of $Y$ by pullback. 

We will say that a reduced scheme $S$ is pseudo-complete if every $h \in H^0(S, \Ocal_S)$ is locally constant. For example, a proper, reduced scheme is pseudo-complete.

\begin{conjAlph}[Conjecture~\ref{conj}]
\label{conj-intro} Let $\zeta:X \to \GZip^\Zcal$.
Assume that:  
\begin{enumerate}[(a)]
\item \label{item-intro-hodgetype} The zip datum $\Zcal$ is of connected-Hodge-type \textnormal{(\Def~\ref{def-conn-hodge-type})}.
\item \label{item-intro-assume-smooth} For all connected components $X^\circ \subset X$, the map $\zeta:X^\circ \to \GZip^\Zcal$ is smooth and surjective.
\item 
\label{item-intro-assume-loc-const} 
All strata closures in $Y$ are pseudo-complete.
\end{enumerate}
Then the global sections cones of $X$ and $\Xcal$ coincide: $ C_\Xcal  =  C_{X} $.\end{conjAlph}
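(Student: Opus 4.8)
Since the inclusion $C_\Xcal \subseteq C_X$ is already observed above (pull sections back along the faithfully flat map $\zeta$), all of the content lies in the reverse inclusion $C_X \subseteq C_\Xcal$, and this is where the three hypotheses must be used. The plan is to descend to the flag space, where automorphic bundles become line bundles, and then transfer a section from $Y$ to $\Ycal$ one stratum at a time.

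\textbf{Step 1: reduction to the flag space.} Let $\pi \colon Y \to X$ and $\varpi \colon \Ycal \to \Xcal$ be the flag space and the stack of zip flags of \S\ref{sec-flag-space}; they are smooth and proper with fibres isomorphic to $P/B$, where $P$ is the parabolic attached to $\Zcal$, and for a $(B\cap L)$-dominant $\lambda$ there is a line bundle $\Lscr_\Ycal(\lambda)$ restricting on each fibre $P/B \cong L/(B\cap L)$ to the usual dominant line bundle. Kempf's vanishing theorem applied fibrewise gives $\varpi_*\Lscr_\Ycal(\lambda)=\Vscr_\Xcal(\lambda)$ and $R^{>0}\varpi_*\Lscr_\Ycal(\lambda)=0$, and likewise $\pi_*\Lscr_Y(\lambda)=\Vscr_X(\lambda)$ with $\Lscr_Y(\lambda)=\zeta_Y^*\Lscr_\Ycal(\lambda)$ for $\zeta_Y:=\zeta\times_\Xcal\mathrm{id}$. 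Hence $H^0(X,\Vscr_X(n\lambda))=H^0(Y,\Lscr_Y(n\lambda))$ and $H^0(\Xcal,\Vscr_\Xcal(n\lambda))=H^0(\Ycal,\Lscr_\Ycal(n\lambda))$ for every $n\ge 1$, so it is enough to prove: if $\Lscr_Y(n\lambda)$ has a nonzero global section for some $n\ge 1$, then $\Lscr_\Ycal(n'\lambda)$ does for some $n'\ge 1$ (enough, since the cones are saturated).

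\textbf{Step 2: the flag stratifications.} The stack $\Ycal$ is irreducible (a quotient of the connected group $G$) and carries its stratification by the Weyl group $W$ of $(G,T)$, with a unique open dense stratum $\Ycal^{\mathrm{ord}}$ (the one lying over the $\mu$-ordinary point of $\Xcal$) and a unique closed stratum; each stratum has a dense point, hence is of the form $[\mathrm{pt}/J_w]$ for an affine algebraic $k$-group $J_w$, and the closure of each codimension-one stratum is a divisor of $\Ycal$ cut out by a partial Hasse invariant, whose weight $\mathrm{hw}_w$ therefore lies in $C_\Xcal$. By hypothesis (b) the base change $\zeta_Y\colon Y\to\Ycal$ is smooth and surjective, hence flat, so pulling back the stratification one gets strata $Y_w=\zeta_Y^{-1}(\Ycal_w)$ of $Y$ with $Y^{\mathrm{ord}}$ open and \emph{dense} in $Y$ (its complement has strictly smaller dimension along every component), and by hypothesis (b) the closures $\overline{Y_w}$ satisfy hypothesis (c). Write $\Ycal^{\mathrm{ord}}=[\mathrm{pt}/J]$; then $\Lscr_\Ycal(n\lambda)|_{\Ycal^{\mathrm{ord}}}$ corresponds to a character $\chi_{n\lambda}\in X^*(J)$, and $Y^{\mathrm{ord}}=[\Ytilde/J]$ where $\Ytilde$ is the base change of the $J$-torsor, a $k$-scheme with $J$-action.

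\textbf{Step 3: transferring a section.} Let $0\ne s\in H^0(Y,\Lscr_Y(n\lambda))$. Since $Y$ is reduced and $Y^{\mathrm{ord}}$ is dense, $s|_{Y^{\mathrm{ord}}}\ne 0$; in the presentation above it is a nonzero $f\in\Ocal(\Ytilde)$ with $h^*f=\chi_{n\lambda}(h)\,f$ for all $h\in J$. The crux is to deduce from this that $\chi_{n\lambda}$ has finite order on $J^\circ$. Granting this, choose a multiple $n'\lambda$ with $\chi_{n'\lambda}$ trivial on $J$, so that $H^0(\Ycal^{\mathrm{ord}},\Lscr_\Ycal(n'\lambda)|_{\Ycal^{\mathrm{ord}}})\ne 0$, and extend its generator across the smaller strata by descending induction, clearing poles along each codimension-one boundary by multiplying by a suitable power of the corresponding partial Hasse invariant; since those weights lie in $C_\Xcal$, this yields a nonzero section of $\Lscr_\Ycal(n''\lambda)$ on $\Ycal$ for some $n''\ge 1$, i.e. $\lambda\in C_\Xcal$.

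\textbf{The main obstacle} is precisely the implication "$f\ne 0$ forces $\chi_{n\lambda}$ of finite order on $J^\circ$''. It is false in general — a split torus has many nontrivial eigenvectors on the coordinate ring of an affine variety — so one must use hypothesis (c): the pseudo-completeness of the strata closures $\overline{Y_w}$, and of $Y=\overline{Y^{\mathrm{ord}}}$ itself, is what constrains $\Ocal(\Ytilde)$ enough to make the eigen-equation impossible unless $\chi_{n\lambda}$ kills $J^\circ$ up to torsion. Hypothesis (a) is needed here as well: it governs the structure of $J^\circ$ and of the assignment $\lambda\mapsto\chi_\lambda$, and it is what makes $C_\Xcal$ equal to $\{\lambda:\chi_\lambda|_{J^\circ}\text{ is of finite order}\}$ rather than something strictly larger. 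The counterexample at the end of the paper, for a zip datum not of connected-Hodge-type, is exactly where this last identification breaks down, which is why (a) cannot be removed.
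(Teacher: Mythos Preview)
The statement you are attempting to prove is a \emph{conjecture} in the paper, not a theorem: the authors do not claim a proof in general, only for the special cases listed in Theorem~\ref{th-intro} (types $A_1^n$, $C_2$, and $\fp$-split $A_2$). So there is no ``paper's own proof'' to compare against; rather, your proposal should be measured against the partial results the paper does establish, and against the counterexample in \S\ref{sec-gsp6} showing why hypothesis~(a) matters.

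Your write-up is honest about its own gap: in the paragraph headed ``The main obstacle'' you correctly identify that the implication ``a nonzero $J$-eigenvector in $\Ocal(\Ytilde)$ forces $\chi_{n\lambda}$ to have finite order on $J^\circ$'' is the entire content of the conjecture, and you do not prove it. Pseudo-completeness of $Y$ itself gives information about $H^0(Y,\Ocal_Y)$, not about $\Ocal(\Ytilde)$ for the $J$-torsor $\Ytilde \to Y^{\mathrm{ord}}$; there is no mechanism in your outline that converts hypothesis~(c) on the strata closures $Y^*_w$ into a constraint on eigenvectors upstairs. So Step~3 is a restatement of the problem, not a solution.

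There is a second, independent error in Step~3. Even granting that $\chi_{n'\lambda}$ is trivial on $J$, your extension argument does not produce a section of $\Lscr_\Ycal(n''\lambda)$. Multiplying a rational section of weight $n'\lambda$ by partial Hasse invariants of weights $\mu_1,\dots,\mu_r$ yields a regular section of weight $n'\lambda+\sum a_i\mu_i$, which is not a multiple of $\lambda$ in general. The clause ``since those weights lie in $C_\Xcal$'' does not repair this: membership of the $\mu_i$ in a cone says nothing about whether $n'\lambda+\sum a_i\mu_i$ is proportional to $\lambda$. What this argument actually proves is the (easy and known) fact that $\{\lambda:\chi_\lambda|_J \text{ torsion}\}+C_{\Sbt}\subset C_\Ycal$, not the desired equality.

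For contrast, the paper's proofs of the special cases proceed quite differently. They do not work through the open stratum and the stabiliser $J$. Instead they argue by induction on the length of $w\in W$, using explicit separating systems of partial Hasse invariants (Definition~\ref{def-separating}) together with case-by-case verified inclusions among the cones $C_{\Ycal,w}$ (Propositions~\ref{prop-codim-1} and~\ref{colength1}, and the change-of-basis computations in \S\ref{sec-Hilbert}). Hypothesis~(c) enters only through the base case (Proposition~\ref{onlyone}) and through Lemma~\ref{lemmainf}, where pseudo-completeness of low-dimensional strata closures is used to show certain $H^0$ groups vanish. This is much more hands-on than your proposed route, and it is precisely because no uniform argument of the kind you sketch is known that the statement remains a conjecture.
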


The following result establishes the conjecture in some special cases. 

\begin{thmAlph}[Theorems \ref{mainthm},~\ref{th-further}]
\label{th-intro}
Suppose that either \begin{enumerate}[(a)]
\item
\label{item-type-a1}
$G$ is of type $A_1^n$ (\ie $G^{\ad}_{\fpbar}\cong PGL(2)_{\fpbar}^n$) and $\Zcal$ is attached to a Borel of $G$,
\item
\label{item-type-c2}
$G$ is of type $C_2$ and $\Zcal$ is proper of connected-Hodge-type \textnormal{(\Def~\ref{def-conn-hodge-type})},
\item 
\label{item-type-a2}
$G$ is $\fp$-split of type $A_2$ and $\Zcal$ is proper of connected-Hodge-type.
\end{enumerate}
Then \textnormal{Conjecture~\ref{conj-intro}} holds for $\Zcal$.
\end{thmAlph}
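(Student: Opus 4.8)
The plan is to reduce the statement to the flag space and then to establish the nontrivial inclusion $C_X\subseteq C_\Xcal$ by an induction along the zip-flag stratification.

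\emph{Step 1: passage to the flag space.} Put $\Ycal:=\GF^\Zcal$ and $Y:=X\times_\Xcal\Ycal$, with $\pi\colon Y\to X$ the induced $P/B$-bundle. For $B\cap L$-dominant $\lambda$, fiberwise Borel--Weil together with Kempf vanishing give $\pi_*\Lscr_Y(\lambda)=\Vscr_X(\lambda)$ and $R^i\pi_*\Lscr_Y(\lambda)=0$ for $i>0$; hence $H^0(Y,\Lscr_Y(n\lambda))=H^0(X,\Vscr_X(n\lambda))$ for all $n\ge1$, and likewise over the stack. Therefore $C_X=C_Y$ and $C_\Xcal=C_\Ycal$, where $C_Y$ and $C_\Ycal$ are the cones of automorphic \emph{line} bundles on the flag space, and it suffices to prove $C_Y\subseteq C_\Ycal$; the reverse inclusion holds by pulling sections back along the smooth surjection $Y\to\Ycal$. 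In case (a) one has $P=B$, so $Y=X$, $\Ycal=\Xcal$, and this step is vacuous.

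\emph{Step 2: the Hasse cone.} For each $w$ in the Weyl group $W$, the strata closure $\overline{\Ycal_w}$ carries a group-theoretical Hasse invariant: a section $h_w$ of an automorphic line bundle of explicit weight $\eta_w$ whose zero locus is exactly $\overline{\Ycal_w}\setminus\Ycal_w$; moreover the cone $C^{\mathrm{Ha}}$ generated by the $\eta_w$ satisfies $C^{\mathrm{Ha}}\subseteq C_\Ycal$. Because $\Zcal$ is of connected-Hodge-type (and proper in cases (b), (c)), the zip datum and the Frobenius action are rigid and $W$ is small --- $(\ZZ/2\ZZ)^n$, the dihedral group of order $8$, or $S_3$ --- so one may compute $C^{\mathrm{Ha}}$ explicitly as a finite intersection of half-spaces $\{\chi:\langle\chi,\gamma_i^\vee\rangle\ge0\}$; this is the content of Theorem~\ref{mainthm}. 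Since $C^{\mathrm{Ha}}\subseteq C_\Ycal\subseteq C_Y$, it now suffices to prove $C_Y\subseteq C^{\mathrm{Ha}}$.

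\emph{Step 3: a global section forces the wall inequalities.} Suppose $\lambda\notin C^{\mathrm{Ha}}$, so that $\langle\lambda,\gamma_i^\vee\rangle<0$ for some $i$, and assume for contradiction that $0\ne s\in H^0(Y,\Lscr_Y(n\lambda))$ for some $n\ge1$. Since $\zeta$ is smooth and surjective on every connected component, the open stratum $Y_{w_0}$ is dense, so $s$ does not vanish on all of it. If every codimension-one boundary divisor lies in the zero locus of $s$, then (passing to a normalization if necessary) $s$ is divisible by $\zeta^*h_{w_0}$; replacing $(s,n\lambda)$ by $(s/\zeta^*h_{w_0},n\lambda-\eta_{w_0})$ and iterating --- which terminates because $\eta_{w_0}$ is regular --- we may assume $s$ restricts to a nonzero section of $\Lscr_Y(\nu)$ on some boundary strata closure, where $\nu=n\lambda-\sum_w a_w\eta_w$ with $a_w\ge0$. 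Continuing the peeling down the stratification and using that every strata closure in $Y$ is pseudo-complete (hypothesis (c) of Conjecture~\ref{conj-intro}), one reaches, for the offending $\gamma_i^\vee$, a pseudo-complete test subvariety $Z_i$ --- a one-dimensional strata closure, or a $\po$-fiber of $\pi$ lying over a pseudo-complete part of the stratification --- along which $\Lscr_Y(\nu)$ has degree a positive multiple of $\langle\nu,\gamma_i^\vee\rangle$. A line bundle admitting a nonzero section on a reduced pseudo-complete scheme has non-negative degree along any pseudo-complete curve it contains; hence $\langle\nu,\gamma_i^\vee\rangle\ge0$, and as $\langle\eta_w,\gamma_i^\vee\rangle\ge0$ for every $w$, this forces $\langle n\lambda,\gamma_i^\vee\rangle\ge0$ --- a contradiction. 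Therefore $C_Y\subseteq C^{\mathrm{Ha}}$, so $C_Y=C_\Ycal=C^{\mathrm{Ha}}$, and pushing forward along $\pi$ yields $C_X=C_\Xcal$.

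\emph{Expected main obstacle.} The heart of the argument is Step 3: one must show that the induction really terminates at genuinely \emph{pseudo-complete} test subvarieties adapted to each wall of $C^{\mathrm{Ha}}$ --- equivalently, that the non-boundary (``horizontal'') part of the zero locus of $s$ cannot be arranged so as to violate an inequality $\langle n\lambda,\gamma_i^\vee\rangle\ge0$. This is where both hypotheses are indispensable: connected-Hodge-type furnishes the partial Hasse invariants with the weights needed to clear the boundary and rigidifies the combinatorics of the strata, while pseudo-completeness of \emph{all} strata closures is precisely what turns the existence of $s$ into numerical inequalities. A secondary, more computational difficulty is Step 2 itself: the explicit shape of $C^{\mathrm{Ha}}$ is within reach only because the Weyl groups here are small and the zip data rigid; for $G$ of higher rank the cone is not known, which is exactly why Conjecture~\ref{conj-intro} remains open beyond types $A_1^n$, $C_2$ and $\fp$-split $A_2$.
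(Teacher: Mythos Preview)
Your Step~1 is fine and matches the paper. Step~2 is roughly right in spirit: in all three cases the paper indeed shows $C_\Ycal$ equals the explicit Schubert/Hasse cone. The real problem is Step~3, where the argument is too optimistic and misses the two ingredients that do the actual work in the paper.

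\textbf{First gap: you cannot always peel.} Your induction tacitly assumes that at each stage, if $\nu\notin C_\Ycal$ then the restriction of $s$ to \emph{some} codimension-one stratum closure vanishes, so that $s$ is divisible by the corresponding partial Hasse invariant. But this requires knowing that $\nu$ lies outside the cone $C_{\Ycal,w_i}$ of that particular lower neighbor $w_i$, \emph{and} that $C_{\Ycal,w_i}=C_{Y,w_i}$ has already been established. The paper isolates this as the hypothesis $\bigcap_i C_{\Ycal,w_i}\subset C_{\Ycal,w}$ (Proposition~\ref{prop-codim-1}(b)) together with a separating system of Hasse invariants. These hypotheses genuinely fail for some strata: in type $C_2$ the stratum $(14)$ has no separating system, and for non-split $A_1^n$ the paper must introduce the notion of \emph{admissible} strata and carry out a delicate change-of-basis computation (Proposition~\ref{admthm}, equation~\eqref{bigeq}) precisely because the naive peeling does not go through. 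Your sentence ``continuing the peeling down the stratification'' hides exactly the difficulty the paper spends \S\ref{sec-Hilbert} resolving.

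\textbf{Second gap: the top step needs $I$-dominance.} In types $C_2$ and $A_2$-split, the longest element $w_0$ has a lower neighbor $w'\notin{}^IW$ for which the paper never proves $C_{\Ycal,w'}=C_{Y,w'}$ (this is the exception in Theorem~\ref{th-further}). The paper bypasses $w'$ entirely by using Proposition~\ref{colength1}: since $Y\to X$ is a $P/B$-bundle, one has automatically $C_Y\subset X^*_{+,I}(T)$, and one checks by hand that $X^*_{+,I}(T)\cap C_{\Ycal,w}\subset C_\Ycal$ for a \emph{single} well-chosen lower neighbor $w$ of $w_0$. Your argument never invokes $I$-dominance, so it cannot avoid the bad neighbor.

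Finally, the endgame of Step~3 (``degree along a pseudo-complete test curve $Z_i$'') is not sound as stated: pseudo-completeness only says global functions are locally constant, it does not give you a degree theory, and your assertion $\langle\eta_w,\gamma_i^\vee\rangle\ge0$ for all $w$ is unproved. The paper's length-one base case (Proposition~\ref{onlyone}) instead argues directly: if $\lambda\notin C_{\Ycal,w}$ then $-\lambda\in C_{\Ycal,w}$, and multiplying a section on $Y$ by the pulled-back section on $\Ycal$ produces a nonzero global function, forcing a contradiction via pseudo-completeness. That is the correct way to cash in hypothesis~(c).
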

 In the three cases of Theorem~\ref{th-intro}, $ C_\Xcal$ is given explicitly in Corollary \ref{cor Hilbert} and Figures~\ref{fig-A2},~\ref{fig-C2} respectively. 

For example, Conjecture~\ref{conj-intro} applies to a proper smooth $k$-scheme $X$ endowed with a smooth, surjective map $X\to \GZip^\Zcal$. It should also apply when $X$ is the special fiber at $p$ of a Shimura variety of Hodge-type with hyperspecial level at $p$ (see \S\ref{sec-shimura}). Specializing to this case, Theorem~\ref{th-intro}\eqref{item-type-a1} applies to Hilbert modular varieties. Modulo a technical assumption on toroidal compactifications, part~\eqref{item-type-c2} applies to Siegel modular threefolds (Shimura varieties of type $GSp(4)$) and part~\eqref{item-type-a2} applies to Picard modular surfaces at a split prime ($GU(2,1)$-Shimura varieties at a split prime). However, as emphasized above, the range of applications of both the conjecture and the theorem is much broader than just Shimura varieties. 

Both $\Xcal, \Ycal$ are stratified (\S\ref{sec-flag-space}) and when $\zeta$ is smooth, so too are $X,Y$. They are then the Zariski closures of their top-dimensional strata. A natural generalization of Conjecture~\ref{conj-intro} is to ask when the global sections cone $C_{\Xcal,w}$ of a stratum $\Xcal_w$ of $\Xcal$ coincides with the cone $C_{X,w}$ of the corresponding stratum $X_w$ of $X$. We find it more natural to study the analogous question on the flag space $Y \to X$, see Question~\ref{q-strata-cones}. The situation for general strata seems more complicated than for $X$ itself, see Remark~\ref{rmk-variants-conj}\eqref{item-fail-strata}. Nevertheless, when $G$ is of type $A_1^n$, we define a notion of "admissible stratum" (\Def~\ref{defadm}) and prove the following:
\begin{thmAlph}[Theorem~\ref{mainthm}] 
\label{th-intro-strata}
Let $G$ be a group of type $A_1^n$ and $\zeta:X \to \GZip^\Zcal$ as in \Th~\ref{th-intro}\eqref{item-type-a1}. Then $ C_{\Xcal,w} = C_{X,w} $ holds for each admissible stratum $\Xcal_w$. Moreover, if $G$ is $\fp$-split, then all strata are admissible.
\end{thmAlph}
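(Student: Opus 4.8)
The plan is to exploit the very explicit structure of zip data attached to a Borel in type $A_1^n$. Here the parabolic $P$ is a Borel $B$ and its Levi $L=T$ is a maximal torus, so the stack of zip flags is already $\Ycal=\Xcal$ and the flag space is $Y=X$, while the Weyl group is $W\cong(\ZZ/2\ZZ)^n$ with strata $\Xcal_w$ indexed by $w\in W$. Over $\fpbar$ one has $G^{\ad}_{\fpbar}\cong PGL(2)^n_{\fpbar}$ and Frobenius $\sigma$ permutes the $n$ simple factors; as is standard (cf. \cite{PinkWedhornZiegler-F-Zips-additional-structure}) the zip datum splits, up to the bookkeeping of the $\sigma$-orbits on $\{1,\dots,n\}$, into a product of $A_1$-type blocks — on an orbit $\Ocal$ of length $\ell$ the corresponding block is governed by the $p^\ell$-power Frobenius — so that $\Xcal\cong\prod_\Ocal\Xcal_\Ocal$. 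A character $\lambda\in X^*(T)$, a stratum $w\in W$, and the bundle $\Vscr_\Xcal(\lambda)$ all decompose compatibly with this product, and so do their pullbacks to $X$. The first step is thus to reduce, via the decomposition, to computing the global sections cone of a single stratum of a single block. Definition~\ref{defadm} is engineered so that \emph{admissibility} of $\Xcal_w$ means precisely that, on each orbit $\Ocal$, the restricted pattern of $w$ is one of those for which the single-block cone is in a controllable shape; when $G$ is $\fp$-split every $\sigma$-orbit is a singleton, so every $w$ is trivially of that shape, and the last sentence of the theorem reduces to unwinding Definition~\ref{defadm} in the split case.

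The inclusion $C_{\Xcal,w}\subseteq C_{X,w}$ is formal. Since $\zeta$ is smooth and surjective, so is the base change $\zeta_w\colon X_w\to\Xcal_w$ along the locally closed immersion $\Xcal_w\hookrightarrow\Xcal$; being faithfully flat, $\zeta_w$ pulls a nonzero global section of $\Vscr_\Xcal(n\lambda)$ on $\Xcal_w$ back to a nonzero global section of $\Vscr_X(n\lambda)$ on $X_w$. Hence the content of the theorem is the reverse inclusion $C_{X,w}\subseteq C_{\Xcal,w}$.

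For the reverse inclusion I would sandwich both cones between two explicit cones $C^{\mathrm{eff}}_w\subseteq C^{\mathrm{nec}}_w$ depending only on $\Zcal$ and $w$: an \emph{effective} cone, for which sections on the stack are written down by hand, and a \emph{necessary-condition} cone, cut out by linear constraints that every section on $X$ must satisfy. For $C^{\mathrm{eff}}_w\subseteq C_{\Xcal,w}$: on a single $A_1$-block, $\Xcal_w$, its closure, and the restrictions of the Hasse invariants and partial Hasse invariants of \cite{Goldring-Koskivirta-Strata-Hasse,Goldring-Koskivirta-zip-flags} are completely explicit, and one produces global sections of $\Vscr_\Xcal(n\lambda)|_{\Xcal_w}$ as monomials in these invariants, then takes exterior tensor products over blocks. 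For $C_{X,w}\subseteq C^{\mathrm{nec}}_w$: a nonzero $h\in H^0(X_w,\Vscr_X(n\lambda))$ is restricted, along $\zeta$, to the preimage of the closed point of $\overline{\Xcal_w}$ and to the preimages of the codimension-one strata in $\overline{\Xcal_w}$; faithful flatness of $\zeta$ transports to $X$ the $T$-representation-theoretic constraint at the fixed point (a local section near it has weight forced by $\mu$ into an explicit cone), while an induction on $\ell(w)$ over the boundary divisors of $\overline{X_w}$ places $\lambda$ into $C^{\mathrm{nec}}_w$. Finally $C^{\mathrm{eff}}_w=C^{\mathrm{nec}}_w$ reduces, through the product decomposition, to a finite rank-one identity on each $\sigma$-orbit-block; admissibility is exactly the hypothesis guaranteeing that these per-block identities hold and assemble, and in the $\fp$-split case it holds for every $w$. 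Chaining $C^{\mathrm{eff}}_w\subseteq C_{\Xcal,w}\subseteq C_{X,w}\subseteq C^{\mathrm{nec}}_w=C^{\mathrm{eff}}_w$ forces all four cones to coincide.

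The main obstacle is the upper bound $C_{X,w}\subseteq C^{\mathrm{nec}}_w$. A priori $\Vscr_X(n\lambda)|_{X_w}$ carries far more sections than $\Vscr_\Xcal(n\lambda)|_{\Xcal_w}$, and since $X$ is not assumed proper one cannot bound $H^0(X_w,\Vscr_X(n\lambda))$ by invoking completeness. The remedy is a divisor-by-divisor analysis of $\overline{X_w}$: each codimension-one stratum of $\overline{X_w}$ is cut out, in the flag-space picture ($Y=X$ here), by the pullback of a partial Hasse invariant of $\Xcal$, whose non-vanishing locus is affine; combined with the pseudo-completeness of the strata closures in $Y$ (hypothesis~\eqref{item-intro-assume-loc-const} of Conjecture~\ref{conj-intro}), every global function on such a locus is locally constant, which pins down the vanishing order of $h$ along the divisor to be exactly that of the matching monomial in Hasse invariants — the same bound as on $\Xcal$. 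Carrying out this analysis uniformly over the possibly many boundary components of $\overline{X_w}$, and checking that the half-space constraints it produces assemble correctly along each $\sigma$-orbit, is precisely where the admissibility hypothesis does its work; once the divisorial bookkeeping closes up, the sandwich collapses to $C_{\Xcal,w}=C_{X,w}$.
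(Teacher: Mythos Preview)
Your proposal correctly identifies the broad architecture: an induction on $\ell(w)$, using partial Hasse invariants to cut out codimension-one substrata and pseudo-completeness (hypothesis~\ref{conj}\eqref{item-assume-loc-const}) to force vanishing. This is indeed the ``basic strategy'' of \S\ref{sec-strategy}, and it \emph{does} suffice in the $\fp$-split case: there every $\sigma$-orbit is a singleton, each stratum admits a separating system, and the intersection condition~\ref{prop-codim-1}\eqref{item-prop2} holds for every $w$, so Corollary~\ref{cor-codim-1} applies directly. Your final sentence about the split case is therefore correct.

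The gap is in the non-split case. Your sandwich $C^{\mathrm{eff}}_w\subset C_{\Xcal,w}\subset C_{X,w}\subset C^{\mathrm{nec}}_w$ together with $C^{\mathrm{eff}}_w=C^{\mathrm{nec}}_w$ is, when unwound, precisely the hypothesis $\bigcap_i C_{\Ycal,w_i}\subset C_{\Ycal,w}$ of Proposition~\ref{prop-codim-1}\eqref{item-prop2}; but as remarked in \S\ref{sec-observe}, this \emph{fails} for most strata once a $\sigma$-orbit has length $>2$. The product decomposition of $\Xcal$ does not help here: $X$ itself has no reason to split compatibly, so ``reducing to a single block'' is unavailable on the $C_{X,w}$ side, and on the $\Xcal$ side the per-block cones simply do not satisfy the needed inclusion. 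Your description of admissibility as ``the hypothesis guaranteeing these per-block identities hold'' is thus off the mark: admissibility (Definition~\ref{defadm}) is a parity condition on the gaps in $S_m$, and its role is not to make the naive induction close but to control \emph{signs} in an explicit change-of-basis computation.

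Concretely, the paper's proof (Proposition~\ref{admthm}) proceeds as follows. Writing the weight $\lambda$ in the basis $\Bcal_S$ of Hasse-invariant weights, one must show $H^0(S,\lambda)=0$ whenever some coordinate $x_j<0$ with $(S,j)$ admissible. Removing a different element $i\in S$ and rewriting $\lambda$ in $\Bcal_{S\setminus\{i\}}$ involves the explicit (and rather unpleasant) transition formula~\eqref{bigeq}; the admissibility hypothesis, via the parity Lemma~\ref{parity}, forces the sign in~\eqref{eqsm} to be such that the new $j$-th coordinate stays negative, so induction applies. When $n_m+s_m$ is even this suffices; when $n_m+s_m$ is odd one needs a further trick, alternately decreasing two coordinates and invoking Lemma~\ref{lemmainf} to reach a contradiction. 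None of this is visible from your outline, and the ``restrict to the closed point'' idea plays no role.
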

Note that $X=Y$ in the context of \Th~\ref{mainthm}. See \Th~\ref{th-further} for related results about the equality of cones of flag strata in case $G$ is of type $C_2$ or split of type $A_2$.

F. Diamond shared with us his conjecture that when $X$ is a Hilbert modular variety, the cone $ C_X $ is equal to that spanned by Goren's partial Hasse invariants \cite{Goren-partial-hasse}. As Diamond later informed us, a related question of determining the 'minimal cone' of mod $p$ Hilbert modular forms had been raised earlier by Andreatta-Goren \cite[Question 15.8]{Andreatta-Goren-low-dimension}. Inspired by Diamond's Conjecture and the observation that Goren's cone can be reinterpreted as the Zip cone $ C_\Xcal $, we were led to study Conjecture~\ref{conj-intro}, first for groups of type $A_1^n$ and then more generally.
 
After we announced the results of this paper and communicated them to Diamond, we received the preprint of Diamond-Kassaei \cite{Diamond-Kassaei}. 
It determines the `minimal cone' of Hilbert modular forms mod $p$. 
As a corollary Diamond-Kassaei deduce a different proof that $ C_\Xcal = C_X $ in the special case when $X$ is the special fiber of a Hilbert modular variety at a place of good reduction (\Cor~1.3 of \loccitn). 

The approach of \loccit uses special properties of Hilbert modular varieties (\eg the results of Tian-Xiao that in the Hilbert case EO strata are $\po$-bundles over quaternionic Shimura varieties). By contrast, our methods only use the map $\zeta$ and its basic properties, which hold for all Hodge-type Shimura varieties and even more general $\GZip^{\Zcal}$-schemes. It remains to be seen whether the Diamond-Kassaei result on the `minimal cone' holds in our more general setting, or whether this finer information is special to Hilbert modular varieties.

\subsection*{Outline} \S\ref{sec-review} recalls the theory of $G$-zips, $G$-zip-flags and automorphic vector bundles in this context. In \S\ref{sec general setting}, we define the global sections cones in $X^*(T)$ and formulate our conjectural generalization of Theorem~\ref{th-intro} to zip data of Hodge type, see \Conj~\ref{conj}. \S\ref{sec-strategy} gives some general results which form the basic strategy for proving Theorem~\ref{th-intro}. The proof of Theorem~\ref{th-intro}\eqref{item-type-a1} is the subject of \S\ref{sec-Hilbert}. Our results on groups of type $A_2,C_2$ and $C_3$ are given in \S\ref{sec-further}.
\section*{Acknowledgments}
We thank Fred Diamond for sharing his conjecture with us and both Diamond and Payman Kassaei for helpful discussions and correspondence. We are grateful to Yohan Brunebarbe and Beno\^{i}t Stroh for our collaboration on related questions concerning ampleness of automorphic bundles and period maps in the setting of $G$-Zips. We thank Torsten Wedhorn for helpful comments on an earlier version of this paper.

W. G. thanks the University of Zurich for providing excellent working conditions and the opportunity to present some of the results of this paper during a visit in the Fall of 2016.

Finally, we thank the referees for their helpful comments.
\section{Review of Zip data, flag spaces and automorphic bundles} \label{sec-review}

\subsection{Zip data (\cite{PinkWedhornZiegler-F-Zips-additional-structure,Pink-Wedhorn-Ziegler-zip-data})} 
\label{review}
Fix an algebraic closure $k$ of $\FF_p$. Let $G$ be a connected reductive $\fp$-group. Denote by $\varphi:G\to G$ the Frobenius morphism. Let $\Zcal:=(G,P,L,Q,M,\varphi)$ be a Frobenius zip datum. Recall that this means $P,Q$ are parabolic subgroups of $G_k$ and $L\subset P$, $M\subset Q$ are Levi subgroups, with the property that $\varphi(L)=M$. We say that $\Zcal$ is a zip datum of Borel-type if $P$ is a Borel subgroup of $G$ (this implies that $Q$ is a Borel too). The zip group $E$ is the subgroup of $P\times Q$ defined by
\begin{equation}
E:=\{(x,y)\in P\times Q, \ \varphi(\overline{x})=\overline{y}\}
\end{equation}
where $\overline{x}\in L$ and $\overline{y}\in M$ denote the Levi components of $x,y$ respectively. Let $G\times G$ act on $G$ by $(a,b)\cdot g:=agb^{-1}$; restriction yields an action of $E$ on $G$. The stack of $G$-zips of type $\Zcal$ is isomorphic to the quotient stack $\GZip^\Zcal \simeq \left[E\backslash G\right]$. We say that $\Zcal$ is {\em proper} if $P$ is a proper parabolic subgroup of $G$.

For convenience, we assume that there exists a Borel pair $(B,T)$ defined over $\FF_p$ such that $B\subset P$. Then there exists an element $z\in W$ such that ${}^zB\subset Q$, and $(B,T,z)$ defines a $W$-frame for $\Zcal$ (\cite[\Def~2.3.1]{Goldring-Koskivirta-zip-flags}).

A cocharacter datum $(G, \mu)$ is a connected, reductive $\fp$-group $G$ together with $\mu \in X_*(G)$. Every such $(G, \mu)$ gives rise to a zip datum $\Zcal_\mu$ (\cite[\S2.2]{Goldring-Koskivirta-zip-flags}). Given a cocharacter datum $(G,\mu)$, one has the associated adjoint datum $(G^\ad, \mu^\ad)$, where $G^\ad$ is the adjoint group of $G$ and $\mu^\ad$ is the composition of $\mu$ with $G \twoheadrightarrow G^\ad$.   

\begin{definition}
\label{def-conn-hodge-type}
Let $(G, \mu)$ be a cocharacter datum. Let $PSp(2g)$ be the split adjoint group of type $C_g$ and $\mu_g \in X_*(PSp(2g))$ a minuscule cocharacter. We say that $(G, \mu)$ is of \underline{connected-Hodge-type} if for some $g \geq 1$, the adjoint datum admits an embedding $(G^\ad, \mu^\ad) \hookrightarrow (PSp(2g), \mu_g)$. A zip datum $\Zcal$ is of connected-Hodge-type if $\Zcal=\Zcal_\mu$ for some $(G,\mu)$ of connected-Hodge-type.
\end{definition}

\subsection{Notation}
\label{sec-notation}
Let $\Phi\subset X^*(T)$ (resp. $\Phi_L$) be the set of $T$-roots in $G$ (resp. $L$). Let $\Phi^+$ (resp. $\Phi^+_L$) be the system of positive roots given by putting $\alpha \in \Phi^+$ (resp. $\alpha \in \Phi^+_L$) when the $(-\alpha)$-root group $U_{-\alpha}$ is contained in $B$ (resp. $B_L:=B\cap L$). Write $\Delta\subset \Phi^+$ (resp. $I \subset \Phi_L^+$) for the subset of simple roots. 

For $\alpha \in \Phi$, let $s_\alpha$ be the corresponding root reflection. Let $W$ (resp. $W_L$) be the Weyl group of $\Phi$ (resp. $\Phi_L$). 
Then $(W,\{s_\alpha| \alpha \in \Delta\})$ is a Coxeter system; denote by $\ell :W\to \NN$ its length function and by $\leq$ the Bruhat-Chevalley order. 
Write $w_0$ for the longest element of $W$. The \emph{lower neighbors} of $w\in W$ are the $w' \in W$ satisfying $w' \leq w$ and $\ell(w')=\ell(w)-1$.
Let ${}^{I} W \subset W$ be the subset of elements $w\in W$ which are minimal in the coset $W_L w$. The $I$-dominant characters of $T$ are denoted $X^*_{+,I}(T)$. 

The Zariski closure of a subscheme or substack $Z$ is denoted $\overline{Z}$; it is always endowed with the reduced structure.

\subsection{Review of flag spaces and their stratification (\cite{Goldring-Koskivirta-Strata-Hasse,Goldring-Koskivirta-zip-flags})} \label{sec-flag-space}
Let $(B,T)$ be an $\fp$-Borel pair of $G$ such that $B\subset P$ (this can be assumed after possibly conjugating $\Zcal$, see \cite[\Rmk~1.3.2(2)]{Goldring-Koskivirta-zip-flags}). Write $\Xcal:=\GZip^\Zcal$. The stack of zip flags $\Ycal:=\GF^{\Zcal}$ was defined in \cite[\S5.1]{Goldring-Koskivirta-Strata-Hasse}; see also \cite[\S3]{Goldring-Koskivirta-zip-flags}. It is isomorphic to $[E'\backslash G]$ where $E'=E\cap (B\times G)$. Recall that $\Ycal$ parametrizes $G$-zips with an additional compatible $B$-torsor. Thus $\Ycal$ is naturally a $P/B$-bundle $\pi:\Ycal \to \Xcal$.

Consider a morphism of stacks $\zeta:X\to \Xcal$. Form the fiber product
\begin{equation} \label{eq-def-flag-space}
\xymatrix@1@M=3pt{
Y \ar[r]^-{\zeta_Y} \ar[d]_-{\pi_{Y/X}} & \Ycal \ar[d]^-{\pi} \\
X \ar[r]_-{\zeta} & \Xcal
}
\end{equation} 
We call $Y$ the (full) flag space of $X$ attached to $B$ (\cite[\S10.3]{Goldring-Koskivirta-Strata-Hasse}). In \cite[\S7.2]{Goldring-Koskivirta-zip-flags}, we also  defined partial flag spaces for intermediate parabolics $B\subset P_0\subset G$, but these are not used here. By \cite[\S4.1]{Goldring-Koskivirta-zip-flags}, there is a zip datum $\Zcal_B:=(G,B,T,{}^z B,T,\varphi)$ and natural smooth morphisms of stacks $\Psi$ and $\beta$ as follows:
\begin{equation}
\label{stratif-mor}
\Ycal \xrightarrow{\Psi} \GZip^{\Zcal_B} \xrightarrow{\beta}[B\backslash G /B].
\end{equation}
The stacks $\Xcal_B:=\GZip^{\Zcal_B}$ and $\Sbt:=[B\backslash G /B]$ are finite; their points are both parametrized by the Weyl group $W$. The stack $\Sbt$ admits the \emph{Schubert stratifiation} by locally closed substacks $\Sbt_w$ for $w \in W$ ordered by the Bruhat-Chevalley order.  The morphism $\beta$ is bijective, but not an isomorphism. By pullback, the fibers of $\Psi$ define a stratification of $\Ycal$ by locally closed substacks $\Ycal_w$, with the same closure relations.

Let $Y_{w}:=\zeta_Y^{-1}(\Ycal_w)$, the corresponding flag stratum in $Y$. Both $\Ycal_w$ and $Y_w$ are endowed with the reduced structure. The Zariski closure $\overline{\Ycal}_{w}$ of $\Ycal_{w}$ is normal (\cite[\S4]{Goldring-Koskivirta-zip-flags}). Let $Y^*_{w}:=\zeta_Y^{-1}(\overline{\Ycal}_{w})$. If $\zeta$ is smooth, then so is $\zeta_Y$ and then $Y_w^*=\overline{Y}_w$.  If $\zeta$ is not smooth,  $Y_w^*$ may not be the Zariski closure of $Y_w$.

Although we shall not need it explicitly in this paper, recall that $\GZip^\Zcal$ also admits a `zip stratification',  whose strata are parameterized by ${}^IW$ (\cite{PinkWedhornZiegler-F-Zips-additional-structure,Pink-Wedhorn-Ziegler-zip-data}). When $G$ is of type $A_1^n$ and $\Zcal$ is of Borel type, $\GZip^\Zcal=\GF^\Zcal$ and the zip stratification agrees with the one of $\GF^\Zcal$ recalled above. 
\subsection{Automorphic vector bundles}
\label{sec-auto-vector-bundles}
All of the automorphic bundles studied in this paper arise from the general associated sheaves construction: If a $k$-group $H$ acts on a $k$-scheme $X$, then every $H$-representation $\rho$ on a $k$-vector space yields a vector bundle $\Vscr(\rho)$ on the quotient stack $[H \backslash X]$, \cf \cite[\S{N.3}]{Goldring-Koskivirta-Strata-Hasse} and \cite[\S5.8]{jantzen-representations}. In particular, every representation of $E$ (resp. $E'$, $B$, $B \times B$)  yields an associated vector bundle on   $\Xcal=[E\backslash G]$ (resp. $\Ycal=[E' \backslash G]$, $P/B$, $[B  \backslash G/B ]$).   

A character $\lambda\in X^*(T)$ gives a $P$-equivariant line bundle $\Lscr_{\lambda}$ on the flag variety $P/B$. The $P$-module $H^0(\lambda):=H^0(P/B, \Lscr_{\lambda})$ gives an $E$-module via the first projection $E \to P$. Denote by $\Vscr_\Xcal(\lambda)$ the associated vector bundle on $\Xcal$.  If $\lambda\in X^*(T)$ is not $I$-dominant, then $\Vscr_\Xcal(\lambda)=0$ by definition.
Given a stack $X$ and a morphism $\zeta:X \to \Xcal$, set $\Vscr_X(\lambda):=\zeta^*(\Vscr(\lambda))$. We call the $\Vscr_\Xcal(\lambda)$  \emph{automorphic vector bundles}.

Let $\Lscr_{\Ycal}(\lambda)$ be the line bundle on $\Ycal$ associated to $\lambda$ via the first projection $E' \to B$. Set $\Lscr_Y(\lambda):=\zeta_Y^{*}(\Lscr_{\Ycal}(\lambda))$. One has the direct image formulas:
\begin{equation}
\label{eq-llambda-vlambda}
(\pi_{Y/X})_*\Lscr_{Y}(\lambda)=\Vscr_X(\lambda).
\end{equation}

\section{The conjecture}\label{sec general setting}
\subsection{Cones} \label{subsec-cones}
Let $G$ be a connected, reductive $\fp$-group. Fix a zip datum $\Zcal:=(G,P,L,Q,M,\varphi)$, with an $\FF_p$-Borel pair $(B,T)$ such that $B\subset P$. Recall that $\Xcal:=\GZip^{\Zcal}$ and $\Ycal=\GF^\Zcal$ denote the associated stacks of $G$-zips and $G$-zip flags. Let $X$ be a stack together with a map $\zeta : X\to \Xcal$. The trivial example $X=\Xcal$ is allowed here. Let $\zeta_Y:Y \to \Ycal$ be the base change of $\zeta$ by $\pi: \Ycal \to \Xcal$. 
\begin{definition} \label{def-global-sections-cone} 
For $w \in W$, the global sections cones of $X$ and $Y_w$ are
\begin{equation}
\label{eq-global-sections-cone-X}
 C_{X} :=\{\lambda \in X^*(T) \ | \  H^0(X, \Vscr_X(n\lambda)) \neq 0 \mbox{ for some }n \geq 1\} 
\end{equation}
\begin{equation}
\label{eq-global-sections-cone-strata}
 C_{Y,w} :=\{\lambda \in X^*(T) \ | \ H^0(Y^*_w, \Lscr_Y(n\lambda)) \neq 0 \mbox{ for some }n \geq 1\} 
\end{equation}
\end{definition}

Put $C_Y=C_{Y,w_0}$. By~\eqref{eq-llambda-vlambda}, one has  $H^0(Y,\Lscr_Y(\lambda))=H^0(X,\Vscr_X(\lambda))$; thus $C_X=C_Y$. If $\zeta$ is surjective, so is $\zeta_{Y}$ and then $C_{\Ycal,w} \subset  C_{Y,w} $ for all $w\in W$.

The main focus of this paper is the following instance of Question~\ref{q-arbitrary-to-gzip}:
\begin{question}
\label{q-strata-cones} 
For which $w\in W$ is $C_{\Ycal,w} = C_{Y,w} $?
\end{question}
\begin{definition}
\label{def-pseudo-complete}
A reduced scheme $Z$ is \underline{pseudo-complete} if every $h\in H^0(Z,\Ocal_Z)$ is locally constant.
\end{definition}  Concerning the cone $ C_X =C_Y=C_{Y,w_0}$, our principal conjecture is:

\begin{conjecture} \label{conj}
Let $X$ be a $k$-scheme and $\zeta: X \to \Xcal$. Assume that:  
\begin{enumerate}[(a)]
\item \label{hodgetype} The zip datum $\Zcal$ is of connected-Hodge-type \textnormal{(\Def~\ref{def-conn-hodge-type})}.
\item \label{item-assume-smooth} For any connected component $X^\circ \subset X$, the map $\zeta:X^\circ \to \Xcal$ is smooth and surjective.
\item \label{item-assume-loc-const} For all $w\in W$, $Y^*_{w}$ is pseudo-complete.
\end{enumerate}
Then the global sections cones of $X$ and $\Xcal$ coincide, that is $C_X= C_{\Xcal} $.\end{conjecture}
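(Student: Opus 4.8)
\emph{Step 1: reductions.} The inclusion $C_\Xcal \subseteq C_X$ is automatic by pulling back sections, so the real content is $C_X \subseteq C_\Xcal$; since $C_X = C_Y$ and $C_\Xcal = C_\Ycal$ (\S\ref{sec general setting}), it is equivalent to show that if $0 \neq s \in H^0(Y,\Lscr_Y(n\lambda))$ for some $n\geq 1$, then $H^0(\Ycal,\Lscr_\Ycal(m\lambda))\neq 0$ for some $m\geq 1$. As every connected component of $\zeta$ is smooth surjective, $\zeta_Y$ is smooth surjective and $Y$ is smooth over $k$; the flag strata $Y_w=\zeta_Y^{-1}(\Ycal_w)$ then stratify $Y$ with the same closure relations as on $\Ycal$, one has $Y^*_w=\overline{Y_w}$, and by hypothesis~(c) every $\overline{Y_w}$ — in particular $Y=\overline{Y_{w_0}}$ — is pseudo-complete.

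\emph{Step 2: the shape of $C_\Ycal$.} The decisive input is a description of the Zip cone $C_\Ycal$ for a datum of connected-Hodge-type. One uses the group-theoretical partial Hasse invariants of \cite{Goldring-Koskivirta-Strata-Hasse,Goldring-Koskivirta-zip-flags}: for $w\in W$ and each lower neighbor $w'$ of $w$ there is $h_{w,w'}\in H^0(\overline{\Ycal_w},\Lscr_\Ycal(\chi_{w,w'}))$ with divisor the codimension-one boundary stratum $\overline{\Ycal_{w'}}\subset\overline{\Ycal_w}$, non-vanishing exactly on $\Ycal_w$, and a power of which extends over $\Ycal$. Since $\overline{\Ycal_{w_0}}=\Ycal$, the Hasse subcone $C^{\mathrm{Ha}}:=\big(\sum\RR_{\geq 0}\,\chi_{w_0,w'}\big)\cap X^*(T)$ (over lower neighbors $w'$ of $w_0$) lies in $C_\Ycal$. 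The claim to prove is the converse, $C_\Ycal=C^{\mathrm{Ha}}$, together with a \emph{rigidity} statement: on a stratum closure $\overline{\Ycal_w}$ every nonzero section of $\Lscr_\Ycal(\nu)$ is, up to a scalar, a monomial in the $h_{w,w'}$ times the restriction of a section not vanishing on $\Ycal_w$ — pushing which down a maximal chain of strata to the minimal stratum $\Ycal_e$ (a $0$-dimensional classifying stack, on which the characters admitting a section form a saturated sublattice $C_{\Ycal_e}\subseteq X^*(T)$) reduces the matter there. Proving this for a general connected-Hodge-type datum is the heart of the conjecture; it amounts to understanding how the $\chi_{w,w'}$ interact over the whole zip-flag stratification in terms of the root combinatorics of $(G^\ad,\mu^\ad)\hookrightarrow(PSp(2g),\mu_g)$. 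For the three types of Theorem~\ref{th-intro}, $C_\Ycal$ is low-dimensional and can be written down explicitly (Corollary~\ref{cor Hilbert}, Figures~\ref{fig-A2},~\ref{fig-C2}).

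\emph{Step 3: transfer from $\Ycal$ to $Y$.} Starting from $Y=\overline{Y_{w_0}}$, run a descending induction on the Bruhat order over stratum closures. For each lower neighbor $w'$ of $w_0$ let $m_{w'}\geq 0$ be the vanishing order of $s$ along the irreducible divisor $\overline{Y_{w'}}=\zeta_Y^{-1}(\overline{\Ycal_{w'}})$ (meaningful since $Y$ is smooth and $\overline{\Ycal_{w'}}$ normal); dividing by $\prod_{w'}\zeta_Y^*(h_{w_0,w'})^{m_{w'}}$ gives a nonzero section of $\Lscr_Y\big(n\lambda-\sum_{w'}m_{w'}\chi_{w_0,w'}\big)$ with no zero along any codimension-one boundary component. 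If this section is moreover nowhere zero on $Y_{w_0}$, pseudo-completeness of $Y$ forces $n\lambda-\sum_{w'}m_{w'}\chi_{w_0,w'}$ into the trivializing sublattice $C_{\Ycal_e}\subseteq C_\Ycal$, and we conclude; otherwise its zero locus meets $Y_{w_0}$, so restrict to an appropriate $\overline{Y_{w'}}$ — smooth surjective over the pseudo-complete $\overline{\Ycal_{w'}}$ — and repeat with $w'$ in place of $w_0$, the rigidity of Step~2 making the successive divisions compatible and keeping the residual character in $C_\Ycal$. The induction ends at $\overline{Y_e}=Y_e$, smooth and surjective over the classifying stack $\Ycal_e$ and pseudo-complete, so that its global sections cone is $C_{\Ycal_e}$. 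Collecting the Hasse characters $\chi_{w,w'}\in C_\Ycal$ peeled off along the chain together with the residual character in $C_{\Ycal_e}\subseteq C_\Ycal$, we get $n\lambda\in C_\Ycal$, hence $\lambda\in C_\Ycal$ as the cone is saturated.

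\emph{Main obstacle.} By far the hardest step is the structural input of Step~2 — $C_\Ycal=C^{\mathrm{Ha}}$ and the rigidity of sections on stratum closures — for an arbitrary connected-Hodge-type datum. This is exactly the ``$G$-zip geometricity'' phenomenon that makes the bare existence of a section on $X$ detectable on the finite stack $\Xcal$, and it is genuinely false without the connected-Hodge-type hypothesis (the example recorded in the abstract). Getting it requires a case analysis of the combinatorial data of $(G,\mu)$, which at present is only manageable for $G$ of type $A_1^n$, type $C_2$, and $\fp$-split type $A_2$ — whence the restriction in Theorem~\ref{th-intro}. A secondary difficulty, internal to Step~3, is compatibility: a codimension-two flag stratum lies in the closures of several codimension-one strata, so the Hasse-division steps along different branches must be reconciled, and it is the pseudo-completeness of \emph{all} stratum closures, not just of $Y$, that keeps this consistent.
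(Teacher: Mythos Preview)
The statement is a \emph{conjecture}; the paper does not prove it in general, only for the special types listed in Theorem~\ref{th-intro}. So there is no ``paper's own proof'' to match, and your proposal should be read as an outline of a possible strategy together with an identification of obstacles.

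Your outline is close in spirit to what the paper actually does in the cases it handles: strip off partial Hasse invariants along codimension-one boundary strata and descend through the Bruhat order (this is the content of Propositions~\ref{prop-codim-1} and~\ref{colength1}), using pseudo-completeness of all stratum closures to terminate the process. So the transfer mechanism in your Step~3 is essentially the paper's method.

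The genuine gap is in Step~2. You isolate as the key structural input the equality $C_\Ycal = C^{\mathrm{Ha}}$, where $C^{\mathrm{Ha}}$ is the cone generated by the weights $\chi_{w_0,w'}$ of partial Hasse invariants on the top stratum --- this is precisely the Schubert cone $C_{\Sbt}$ of Definition~\ref{def-partial-hasse-schubert}. But the paper explicitly records in \S\ref{sec-conclusion} that $C_{\Sbt} \neq C_\Ycal$ already for $G = Sp(6)$ with its Siegel cocharacter, which \emph{is} of connected-Hodge-type. So the structural input you identify as ``the heart of the conjecture'' is not merely difficult: it is false. The equality $C_\Ycal = C_{\Sbt}$ happens to hold in the three cases of Theorem~\ref{th-intro}, and this coincidence is what makes the partial-Hasse-invariant descent go through there, but beyond these cases one must first \emph{determine} $C_\Ycal$ (the paper announces this as the subject of forthcoming work), and any proof of the conjecture will have to cope with sections on $\Ycal$ that are \emph{not} monomials in partial Hasse invariants. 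Your ``rigidity'' hypothesis therefore fails as well, and the descent in Step~3 cannot proceed by Hasse division alone.

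A secondary point: your claim that a power of each $h_{w,w'}$ ``extends over $\Ycal$'' is not established in the references you cite; the Hasse invariants of \cite{Goldring-Koskivirta-Strata-Hasse,Goldring-Koskivirta-zip-flags} live on stratum closures $\overline{\Ycal_w}$, and their extension to all of $\Ycal$ is a separate (and in general open) question.
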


\begin{rmk}
\label{rmk-variants-conj}
As motivation for the assumptions of the conjecture, the following notes how some variants fail to hold.
\begin{enumerate}[(a)] 
\item A multiple $n \geq 1$ as in \Def~\ref{def-global-sections-cone} is necessary. For example, the special fiber of the (compactified) modular curve satisfies the assumptions of~\ref{conj}. In this case, the Hodge line bundle $\omega$ satisfies
$H^0(\Xcal,\omega^n)\neq 0 \Longleftrightarrow n=(p-1)m, m\geq 0$. \item \label{item-fail-strata}
We conjecture that $ C_{\Ycal,w}  \neq  C_{Y,w}$ when $Y$ is the mod $p$ special fiber of a Hilbert modular threefold at a totally inert prime and $w \in W$ has length two. This conjecture would show that the answer to Question~\ref{q-strata-cones} can be "not all".
\item 
\label{item-fail-non-hodge}
In \S\ref{sec-gsp6}, we give an example of a pair $(X,\zeta)$ satisfying assumptions \ref{conj}\eqref{item-assume-smooth}-\eqref{item-assume-loc-const}, but not \eqref{hodgetype}, for which $C_{\Xcal}  \neq  C_{X}$. 
\end{enumerate}
\end{rmk}
\begin{rmk} 
In contrast with $C_X  =C_Y $, when $\Ycal \neq \Xcal$ it seems difficult to relate the cones of flag strata in $Y$ (resp. $\Ycal$) with cones of zip strata in $X$ (resp. $\Xcal$). For this reason, we don't know if it's reasonable to expect a variant of Conjecture~\ref{conj}, where~\eqref{item-assume-loc-const} is replaced by the analogous condition for strata of $X$.   
\end{rmk}
\subsection{Shimura varieties}
\label{sec-shimura}
Let $X$ be the special fiber of a Hodge-type Shimura variety with hyperspecial level at $p$. Let $G$ be the corresponding reductive $\fp$-group and $\Zcal$ the zip datum of $X$. By \cite{ZhangEOHodge}, there is a smooth morphism of stacks
$\zeta:X\to \GZip^\Zcal$.
Let $X^{\tor}$ be a smooth, projective toroidal compactification of $X$ afforded \cite{MadapusiHodgeTor}. In \cite[\S5.1]{Goldring-Koskivirta-Strata-Hasse}, we constructed an extension $\zeta^{\tor}:X^{\tor} \to \GZip^\Zcal$ of $\zeta$ to $X^{\tor}$. 

By definition, both $\zeta$ and $\zeta^{\tor}$ satisfy \ref{conj}\eqref{hodgetype}. A number of works have recently shown that $\zeta$ is surjective on every connected component $X^{\circ}$ of $X$ \cf~\cite{Lee-newton-strata-nonempty,Kisin-Honda-Tate-theory-Shimura-varieties}. Thus $\zeta$ satisfies~\ref{conj}\eqref{item-assume-smooth}. Since $X^\tor$ is reduced and proper, $(X^\tor, \zeta^{\tor})$ satisfies~\ref{conj}\eqref{item-assume-loc-const}.

Therefore, if $\zeta^{\tor}$ is smooth, then Conjecture~\ref{conj} applies to $(X^\tor, \zeta^\tor)$. Moreover, provided the usual hypotheses are satisfied, the classical Koecher principle implies that $ C_X = C_{X^{\tor }}$, so the conjecture for $X^{\tor}$ is equivalent to that for $X$.

The smoothness of $\zeta^\tor$ should follow from the work of Lan-Stroh \cite{Lan-Stroh-stratifications-compactifications}. For the special groups appearing in Theorem~\ref{th-intro}, the smoothness of $\zeta^{\tor}$ may also follow from Boxer's thesis \cite{Boxer-thesis}, once it is suitably reinterpreted in the language of $G$-Zips. We expect that ~\ref{conj}\eqref{item-assume-loc-const} for $X$ itself also follows from a version of Lan-Stroh's Koecher principle for strata  \cite[\Th~2.5.10]{Lan-Stroh-stratifications-compactifications}, but have not checked this. 

In any case, \ref{conj}\eqref{item-intro-assume-loc-const} certainly holds for Hilbert modular varieties $X$ of dimension $>1$ by the classical Koecher principle, because then $X=Y$ is its own flag space and the proper strata of $X$ are proper (they do not intersect the toroidal boundary).

\section{Strategy of proof} \label{sec-strategy}
\subsection{Some general remarks}
\label{sec-observe}
Assume $X$ is a $k$-scheme satisfying~\ref{conj}\eqref{item-intro-assume-smooth}-\eqref{item-intro-assume-loc-const}.
Proposition~\ref{prop-codim-1} and Corollary~\ref{cor-codim-1} below provide a simple strategy to prove the equality of cones $ C_{\Ycal,w}  = C_{Y,w}$ for all $w \in W$. 
This strategy assumes that the stratification of $\Ycal$ has some particularly nice properties. {\em A priori}, it supposes neither that $\Zcal$ is of connected-Hodge-type, nor does it use that $Y$ arises as a fiber product of $X$ and $\Ycal$ over $\Xcal$.

The problem is then that the hypotheses of Proposition~\ref{prop-codim-1} will usually not be satisfied by all strata. 
In Theorem~\ref{th-intro}, the only cases where the hypotheses below are satisfied for all $w \in W$ are $\fp$-split groups of type $A_1^n$ and arbitrary groups of type $A_1 \times A_1$. The work to prove Theorem~\ref{th-intro} in the other cases consists of weakening the hypotheses of Proposition~\ref{prop-codim-1} and using additional knowledge about the cone $ C_Y$. The former leads to the notions of admissibility in \S\ref{sec-result-hilbert}; the latter uses the fact that, since $Y \to X$ is a flag variety bundle, $C_Y \subset X^*_{+,I}(T)$. Proposition~\ref{colength1} gives a  simple but useful extension of this kind. 
\subsection{One-dimensional strata}
\label{sec-one-dim-strata}
The following proposition will serve as the first step of many inductive arguments later on. 
\begin{proposition} Assume $\zeta:X \to \Xcal$ satisfies~\textnormal{\ref{conj}\eqref{item-intro-assume-smooth}-\eqref{item-intro-assume-loc-const}}.
\label{onlyone}
If $w \in W$ and $\ell(w)=1$, then $C_{\Ycal,w} = C_{Y,w} $.
\end{proposition}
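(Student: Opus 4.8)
An element of $W$ of length one is a simple reflection $s_\alpha$ with $\alpha\in\Delta$, so I will write $w=s_\alpha$. The plan is to establish the chain
\[
C_{\Ycal,s_\alpha}\ \subseteq\ C_{Y,s_\alpha}\ \subseteq\ \{\lambda\in X^*(T)\mid \langle\lambda,\alpha_0^\vee\rangle\ge 0\}\ \subseteq\ C_{\Ycal,s_\alpha},
\]
where $\alpha_0^\vee$ is the (a priori Frobenius-twisted) coweight recording the degree of $\Lscr_\Ycal(\lambda)$ along the curve $\overline{\Ycal}_{s_\alpha}$. The first inclusion is general: $\zeta$ is surjective, hence so is $\zeta_Y$, and sections pull back from $\overline{\Ycal}_{s_\alpha}$ to $Y^*_{s_\alpha}=\overline{Y}_{s_\alpha}$ (smoothness of $\zeta_Y$ gives $Y^*_{s_\alpha}=\overline{Y}_{s_\alpha}$). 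For the last inclusion I would use a partial Hasse invariant. Since $\ell(s_\alpha)=1$, the only lower neighbor of $s_\alpha$ is $e$, so $\overline{\Ycal}_{s_\alpha}$ has exactly two strata: the dense open $\Ycal_{s_\alpha}$ and the minimal closed stratum $\Ycal_e$, a reduced Cartier divisor. By the analysis of strata closures in \cite{Goldring-Koskivirta-zip-flags} there should be a section $h_\alpha\in H^0(\overline{\Ycal}_{s_\alpha},\Lscr_\Ycal(\chi_\alpha))$ for an explicit weight $\chi_\alpha$, with vanishing locus exactly $\Ycal_e$ and with $\langle\chi_\alpha,\alpha_0^\vee\rangle=1$; moreover $\Ycal_{s_\alpha}$ and $\Ycal_e$ are classifying stacks of groups that are extensions of finite groups by unipotent ones, so their Picard groups are finite and $\Pic(\overline{\Ycal}_{s_\alpha})$ has rank at most one modulo torsion, rationally generated by $\Lscr_\Ycal(\chi_\alpha)$. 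Hence if $\langle\lambda,\alpha_0^\vee\rangle\ge 0$ then $\Lscr_\Ycal(N\lambda)\cong\Lscr_\Ycal(N\langle\lambda,\alpha_0^\vee\rangle\,\chi_\alpha)$ for $N\gg 0$, which carries the nonzero section $h_\alpha^{\,N\langle\lambda,\alpha_0^\vee\rangle}$, so $\lambda\in C_{\Ycal,s_\alpha}$.

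The middle inclusion $C_{Y,s_\alpha}\subseteq\{\langle\lambda,\alpha_0^\vee\rangle\ge 0\}$ is the crux, and this is where hypotheses~\ref{conj}\eqref{item-intro-assume-smooth}--\eqref{item-intro-assume-loc-const} enter. As $C_{Y,s_\alpha}$ is the union of the analogous cones over the connected components of $X$, I may assume $X$ connected. Suppose $0\ne s\in H^0(\overline{Y}_{s_\alpha},\Lscr_Y(n\lambda))$, and for contradiction that $d:=\langle\lambda,\alpha_0^\vee\rangle<0$. Set $H_\alpha:=\zeta_Y^*h_\alpha$, a section of $\Lscr_Y(\chi_\alpha)$ with vanishing locus the divisor $Y_e=\zeta_Y^{-1}(\Ycal_e)$, nonempty since $\zeta_Y$ is surjective; moreover, as $\zeta_Y$ is flat with irreducible target, every irreducible component of $\overline{Y}_{s_\alpha}$ dominates $\overline{\Ycal}_{s_\alpha}$, so no component lies in $Y_e$ and $Y_e$ meets each of them. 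Then $t:=s\cdot H_\alpha^{-nd}$ (with $-nd>0$) is a nonzero section of $\Lscr_Y(n\lambda-nd\,\chi_\alpha)=\zeta_Y^*\Lscr_\Ycal(n\lambda-nd\,\chi_\alpha)$; the latter has degree $0$ along $\overline{\Ycal}_{s_\alpha}$, hence is torsion there, so a suitable power $t^k$ lies in $H^0(\overline{Y}_{s_\alpha},\Ocal)$ and is nonzero (on a component where $s$ is nonzero so is $t$, that component not lying in $Y_e$). By~\ref{conj}\eqref{item-intro-assume-loc-const}, $\overline{Y}_{s_\alpha}=Y^*_{s_\alpha}$ is pseudo-complete, so $t^k$ is locally constant; being nonzero, it is a nonzero constant on the component carrying it, hence nowhere vanishing there. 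But $H_\alpha$, and therefore $t^k$, vanishes along $Y_e$, which meets that component — a contradiction. Thus $d\ge 0$, and the chain of inclusions closes up.

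I expect the main obstacle to be the two geometric inputs about $\overline{\Ycal}_{s_\alpha}$: the explicit description of the partial Hasse invariant $h_\alpha$ and of its weight $\chi_\alpha$, in particular tracking the Frobenius twist that enters $\alpha_0^\vee$; and the claim that $\Pic(\overline{\Ycal}_{s_\alpha})$ has rank at most one modulo torsion, equivalently that a line bundle of degree $0$ along the minimal curve is torsion. Both should follow from \cite{Goldring-Koskivirta-zip-flags}. A secondary point requiring care is ensuring that surjectivity of $\zeta$ on each connected component of $X$ (hypothesis~\ref{conj}\eqref{item-intro-assume-smooth}) genuinely forces $Y_e$ to meet the component of $\overline{Y}_{s_\alpha}$ carrying a given nonzero section; together with the pseudo-completeness hypothesis~\ref{conj}\eqref{item-intro-assume-loc-const}, which provides that $H^0(\Ocal)$ consists of locally constant functions, this is exactly what keeps $C_{Y,s_\alpha}$ from being strictly larger than $C_{\Ycal,s_\alpha}$.
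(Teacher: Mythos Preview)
Your argument is correct and follows the same underlying strategy as the paper: exploit the one-dimensionality of $\overline{\Ycal}_{s_\alpha}$ to turn a hypothetical section of $\Lscr_Y(n\lambda)$ with $\lambda\notin C_{\Ycal,s_\alpha}$ into a nonzero element of $H^0(\overline{Y}_{s_\alpha},\Ocal)$, then invoke pseudo-completeness to force a contradiction with a nonempty vanishing locus.

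The paper's execution is more direct, however. Rather than introducing the explicit Hasse invariant $h_\alpha$, the degree pairing $\langle\,\cdot\,,\alpha_0^\vee\rangle$, and the ``pass to a power to trivialize a degree-zero bundle'' step, the paper simply observes that for $\ell(w)=1$ one has the dichotomy $\lambda\in C_{\Ycal,w}$ or $-\lambda\in C_{\Ycal,w}$ (this encodes exactly your $\Pic$-rank-one input). So if $\lambda\notin C_{\Ycal,w}$, there is directly a nonzero $h\in H^0(\Ycal^*_w,\Lscr_\Ycal(-m\lambda))$ for some $m\ge 1$; multiplying the given nonzero $f\in H^0(Y^*_w,\Lscr_Y(m\lambda))$ by $\zeta_Y^*(h)$ already yields a nonzero regular function on $Y^*_w$, with no further power needed. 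Your route via $H_\alpha^{-nd}$ and the torsion argument reproduces this, just with more bookkeeping. What your version buys is an explicit description of $C_{\Ycal,s_\alpha}$ as a half-space; what the paper's version buys is that one never has to identify $\chi_\alpha$ or $\alpha_0^\vee$, nor justify the torsion claim for degree-zero bundles on $\overline{\Ycal}_{s_\alpha}$. The concern you flag at the end---that the relevant component of $\overline{Y}_{s_\alpha}$ actually meets $Y_e$---is the same point of care present (implicitly) in the paper's proof as well.
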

\begin{proof}
Let $\lambda \in  C_{Y,w} $ and assume $\lambda \notin  C_{\Ycal, w}$. Since $\ell(w)=1$, we have $-\lambda \in  C_{\Ycal, w}$. Hence for some $m \geq 1$, there exist nonzero $h\in H^0(\Ycal^*_w,\Lscr_{\Ycal}(-m\lambda))$ and $f\in H^0(Y^*_w, \Lscr_Y(m\lambda))$. 

Since $\zeta_Y$ is smooth, $Y^*_w$ is reduced, so there is an irreducible component $Y_w'\subset Y_w$ where $f|_{Y'_w}\neq 0$. Since $h$ is nowhere vanishing on $\Ycal_w$, the pullback $\zeta_Y^*(h)$ is nowhere vanishing on $Y_w$. In particular, $\zeta_Y^*(h)$ is nowhere zero on $Y'_w$. So $\zeta_Y^*(h)f\in H^0(Y'_w,\Ocal_{Y'_w})$ is nonzero too. By \ref{conj}\eqref{item-assume-loc-const}, $\zeta_Y^*(h)f$ is constant. Thus $h$ is nowhere zero on $\Ycal_w$ and $\Lscr_{\Ycal}(m\lambda)|_{\Ycal_w}\simeq \Ocal_{\Ycal_w}$; this contradicts $\lambda \notin  C_{\Ycal,w}$. 
\end{proof}

\subsection{Changing the center of $G$} \label{sec-change-group}

Let $\tilde{G}$ be the simply-connected covering of the derived group of $G$ (in the sense of reductive algebraic groups). Write $\iota:\tilde G \to G$ for the natural map. Pulling back $\Zcal$ to $\tilde G$ along $\iota$ yields a zip datum $\tilde{\Zcal}$ for $\tilde{G}$. Write $\tilde{\Xcal}=\GtildeZip^{\tilde{\Zcal}}$ and $\tilde{\Ycal}$ for the corresponding stack of zip flags. The map $\iota : \tilde{G}\to G$ induces a homeomorphism $\tilde{\Xcal}\to \Xcal$. Consider the fiber product
$$\xymatrix@1@M=5pt{
\tilde{X}\ar[r]^-{\tilde{\zeta}} \ar[d]^{\iota_X}& \tilde{\Xcal}\ar[d]^\iota\\
X\ar[r]^-{\zeta} & \Xcal.
}$$

Put $\tilde{T}=\iota^{-1}(T)$, a maximal torus of $\tilde{G}$. Write $\iota^*:X^*(T) \rightarrow X^*(\tilde T)$ for the restriction map. 

\begin{lemma}
\label{change}
Let $X$ be a stack and $\zeta:X \to \Xcal$ arbitrary ($X=\Xcal$ allowed).
For all $w\in W$, one has 
$ \iota^* C_{Y,w}  = C_{\tilde{Y}, w}$. In particular,
$ C_{\Ycal, w} = C_{Y,w} \Longleftrightarrow  C_{\tilde{\Ycal}, w}= C_{\tilde{Y},w} .$
\end{lemma}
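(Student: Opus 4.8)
The plan is to analyze the relationship between the automorphic bundles on $Y$ and those on $\tilde Y$, and track how the cones transform under the restriction map $\iota^*$. First I would observe that $\iota: \tilde G \to G$ is a central isogeny (the simply-connected cover of the derived group, composed with the inclusion), and in particular it induces an isomorphism on adjoint groups; consequently the parabolics, Levis, Weyl groups and Bruhat order all match up, so the stratifications of $\Ycal$ and $\tilde\Ycal$ are compatible via the homeomorphism $\tilde\Xcal \to \Xcal$ and its lift on flag stacks. The key point is then to identify, for $\lambda \in X^*(T)$, the pullback of the line bundle $\Lscr_{\Ycal}(\lambda)$ along the map $\tilde\Ycal \to \Ycal$ (induced by $\iota$) with $\Lscr_{\tilde\Ycal}(\iota^*\lambda)$. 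This follows because $\Lscr_{\Ycal}(\lambda)$ is built from the character $\lambda$ of $B$ via the projection $E' \to B$, and everything is functorial in the group: pulling back the $E'$-representation along $\tilde E' \to E'$ corresponds to restricting $\lambda$ along $\tilde B \to B$, i.e. to $\iota^*\lambda$. The same compatibility holds after base change to $Y$ and $\tilde Y = Y \times_{\Xcal} \tilde\Xcal$ (equivalently $Y \times_Y \tilde Y$ — note $\tilde X \to X$ is itself a pullback of $\tilde\Xcal \to \Xcal$, hence the flag space of $\tilde X$ is the pullback of $\tilde\Ycal$).

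Next I would establish the containment $\iota^* C_{Y,w} \subseteq C_{\tilde Y, w}$: given $\lambda$ with $H^0(Y^*_w, \Lscr_Y(n\lambda)) \neq 0$ for some $n\geq 1$, pulling back a nonzero section along the (surjective) map $\tilde Y^*_w \to Y^*_w$ gives a nonzero section of $\Lscr_{\tilde Y}(n\iota^*\lambda)$ over $\tilde Y^*_w$, since $\tilde\Xcal \to \Xcal$ is a homeomorphism with the two stacks differing only by a central, hence "invisible on $G$", factor — more carefully, the map $\tilde G \to G$ is faithfully flat, so pullback of sections is injective. For the reverse containment I would argue that any character of $\tilde T$ in the image of $\iota^*$ that lies in $C_{\tilde Y, w}$ must come from a character in $C_{Y,w}$: the kernel of $\iota$ is a central multiplicative subgroup $\mu$, the map $X^*(T) \to X^*(\tilde T)$ has image exactly the characters trivial on $\ker(\tilde T \to T)$... wait — more simply, $\iota^*$ need not be surjective, so the statement is precisely that the cone $C_{\tilde Y, w}$, intersected with $\mathrm{im}(\iota^*)$, equals $\iota^*(C_{Y,w})$; but actually the cleanest route is to show $\Lscr_Y(n\lambda)$ and $\Lscr_{\tilde Y}(n\iota^*\lambda)$ have \emph{the same} spaces of global sections over corresponding strata closures. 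This is because $\iota_X: \tilde X \to X$ is a torsor under the finite diagonalizable group $\ker(\iota)$ (a pullback of the $\ker(\iota)$-gerbe-trivialization $\tilde\Xcal \to \Xcal$), so $H^0(\tilde Y^*_w, \Lscr_{\tilde Y}(n\iota^*\lambda)) = H^0(Y^*_w, (\iota_{Y})_* \Lscr_{\tilde Y}(n\iota^*\lambda))$, and $(\iota_Y)_* \Lscr_{\tilde Y}(n\iota^*\lambda)$ decomposes under the $\ker(\iota)$-action with $\Lscr_Y(n\lambda)$ as the isotypic summand for the character by which $\ker(\iota)$ acts on $\Lscr_{\tilde Y}(n\iota^*\lambda)$. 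Hence $\Lscr_Y(n\lambda)$ has a nonzero section iff $\Lscr_{\tilde Y}(n\iota^*\lambda)$ does (as the relevant isotypic piece must be nonzero), which gives $\iota^* C_{Y,w} = C_{\tilde Y, w}$ — here I am implicitly using that $\iota^*$ is injective on the saturated cone, or rather that the statement is about the \emph{image} cone, so equality of image cones is exactly what this buys.

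Finally, the equivalence $C_{\Ycal,w} = C_{Y,w} \iff C_{\tilde\Ycal, w} = C_{\tilde Y, w}$ follows formally: applying the first assertion of the lemma with $X = \Xcal$ (allowed, as stated) gives $\iota^* C_{\Ycal, w} = C_{\tilde\Ycal, w}$, and combined with $\iota^* C_{Y,w} = C_{\tilde Y, w}$ we get that $C_{\Ycal, w} = C_{Y,w}$ implies $C_{\tilde\Ycal, w} = C_{\tilde Y, w}$. For the converse I would use that $C_{\Ycal, w} \subseteq C_{Y, w}$ always (pullback of sections along the surjective $\zeta_Y$, noted in the text), so it suffices to rule out strict inclusion; if $\lambda \in C_{Y,w} \setminus C_{\Ycal, w}$ then $\iota^*\lambda \in C_{\tilde Y, w}$, and I must show $\iota^*\lambda \notin C_{\tilde\Ycal, w}$ — this uses that $\iota^*$ reflects membership, which again follows from the isotypic-decomposition argument applied to the stacks $\Ycal, \tilde\Ycal$ themselves (a nonzero section of $\Lscr_{\tilde\Ycal}(n\iota^*\lambda)$ forces a nonzero section of $\Lscr_{\Ycal}(n\lambda)$). \textbf{The main obstacle} I anticipate is being careful about the non-surjectivity of $\iota^*: X^*(T) \to X^*(\tilde T)$ and making precise the claim that the cones "match under $\iota^*$" in a way that is not vacuous — the resolution is the observation that $\Lscr_{\tilde Y}(\mu)$ for $\mu = \iota^*\lambda$ only depends on $\mu$, and that the finite torsor structure of $\iota_Y$ lets one pass sections back and forth without loss, so that vanishing/non-vanishing of global sections is genuinely detected at the level of $\tilde G$; the rest is bookkeeping with the compatibility of the stratifications, which is routine given the cited results of \cite{Goldring-Koskivirta-zip-flags} on the normality and closure relations of the $\overline{\Ycal}_w$.
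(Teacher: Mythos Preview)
Your overall strategy matches the paper's, but you supply detail where the paper is terse. The paper's proof is two sentences: pullback of sections gives $\iota^* C_{Y,w}\subset C_{\tilde Y,w}$ (exactly as you argue), and the reverse inclusion is obtained by invoking the descent lemma \cite[Lemma~3.2.2]{Goldring-Koskivirta-Strata-Hasse} as a black box. You instead try to prove that descent directly via an isotypic decomposition under $\ker(\iota)$, which is the right underlying mechanism and is essentially what the cited lemma does.

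There is, however, a genuine gap in your execution. You assert that $\iota_X:\tilde X\to X$ is a $\ker(\iota)$-torsor (calling $\tilde\Xcal\to\Xcal$ a ``$\ker(\iota)$-gerbe-trivialization''), but $\iota:\tilde G\to G$ is in general neither injective nor surjective: it factors as the central isogeny $\tilde G\to G^{\der}$ (with kernel the finite group $\pi_1(G^{\der})$) followed by the closed immersion $G^{\der}\hookrightarrow G$. For $G=GL(n)$ one has $\ker(\iota)=1$ while $\iota$ is far from surjective, so no nontrivial torsor or gerbe structure for $\ker(\iota)$ is available, yet the lemma must still hold. The map $\tilde\Xcal\to\Xcal$ is a homeomorphism that alters automorphism groups; it is better analyzed in two steps. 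Your isotypic-decomposition argument correctly handles the isogeny step $\tilde G\to G^{\der}$. For the step $G^{\der}\hookrightarrow G$ one needs a separate (easier) observation: characters in $\ker(\iota^*)$ factor through $G\to G/G^{\der}$, hence give line bundles on $\Ycal$ that are torsion, so they do not affect the saturated cones. With both steps in hand, your derivation of the ``in particular'' clause is correct as written.
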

\begin{proof} By pullback of sections, $\iota^* C_{Y,w}  \subset C_{\tilde{Y},w} $ for all $w \in W$. The reverse inclusions follow from the descent lemma \cite[3.2.2]{Goldring-Koskivirta-Strata-Hasse}.
\end{proof}
Consequently, the equality of global sections cones for $(X, \zeta)$ depends only on the type of $G$, not on $G$ itself.
\subsection{The basic strategy}
\label{sec-basic-strategy}
\begin{definition}
\label{def-partial-hasse-schubert}
Let $w \in W$ and $\lambda \in X^*(T)$.
\begin{enumerate}[(a)]
\item A \underline{partial Hasse invariant} of $\Lscr_{\Ycal}(\lambda)$ on $\Ycal^*_{w}$ is a section $s \in H^0(\Ycal^*_{w}, \Lscr_{\Ycal}(\lambda))$ which is pulled back from the Schubert stratum $\Sbt^*_w$ \textnormal{(\S\ref{sec-flag-space})}.
\item The \underline{Schubert cone} $C_{\Sbt,w} \subset C_{\Ycal,w}$ of $w$ is the cone of $\lambda \in X^*(T)$ such that $\Lscr_\Ycal(N\lambda)$ admits a partial Hasse invariant on $\Ycal_w$ for some $N\geq 1$.
\end{enumerate}
\end{definition}
Recall that $\Ycal^*_{w}$ and $Y^*_w$ are normal, so we may consider Weil divisors. If $s \in H^0(\Ycal^*_{w}, \Lscr_{\Ycal}(\lambda))$ is a partial Hasse invariant, its divisor will be supported on a (possibly empty) union of codimension one strata closures in $\Ycal^*_{w}$. If $\zeta$ is smooth, the multiplicities in $\Div(\zeta_Y^*(s))$ equal those of $\Div(s)$ .

\begin{definition} \label{def-separating}
Let $w\in W$ and $\{w_i\}_{i=1}^n$ the set of lower neighbors of $w \in W$. A \underline{separating system} of partial Hasse invariants for $\Ycal_{w}$ is a set of partial Hasse invariants $\{s_i\}_{i=1}^n$ with $s_i \in H^0(\Ycal^*_{w}, \Lscr_{\Ycal}(\lambda_i))$ such that $\Div(s_i)=\Ycal^*_{w_i}$.
\end{definition}
 
\begin{rmk} \
\label{rmk-sep-basis}
\begin{enumerate}[(a)] 
\item For groups of type $A_1^n$, there always exists a particularly simple separating system of partial Hasse invariants, see \S\ref{thegroup}. 
\item In general, many strata do not admit a separating system, as the number of lower neighbors of $w \in W$ can exceed the semisimple rank of $G$.
\item If $\Pic(G)=0$, then clearly the element $w_0\in W$ admits a separating system.
\end{enumerate}
\end{rmk}

\begin{proposition}\label{prop-codim-1}
Let $w\in W$ with lower neighbors $\{w_i\}_{i=1}^n$. Assume that:
\begin{enumerate}[(a)]
\item \label{item-prop1} There exists a separating system of partial Hasse invariants for $\Ycal_{w}$.
\item \label{item-prop2} One has $\bigcap_{i=1}^n  C_{\Ycal, w_i}  \subset  C_{\Ycal, w}$.
\item \label{item-prop3} Each $w_i$ satisfies the equality of cones $C_{Y,w_i}  = C_{\Ycal, w_i}$.
\end{enumerate}
Then $w$ satisfies the equality of cones $ C_{Y,w}  = C_{\Ycal, w}$. 
\end{proposition}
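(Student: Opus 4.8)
The plan is to prove the nontrivial inclusion $C_{Y,w}\subseteq C_{\Ycal,w}$, the reverse one holding in general by pulling back sections along the surjection $\zeta_Y$ (as recorded after Definition~\ref{def-global-sections-cone}). Throughout, $\zeta$ — hence $\zeta_Y$ — is smooth and surjective, so $Y^*_w$ is normal and, for any section $s$ pulled back from a Schubert stratum, $\Div(\zeta_Y^*(s))$ has the same multiplicities as $\Div(s)$. Fix a separating system $\{s_i\}_{i=1}^n$ as in~\eqref{item-prop1}, with $s_i\in H^0(\Ycal^*_w,\Lscr_{\Ycal}(\lambda_i))$ and $\Div(s_i)=\Ycal^*_{w_i}$; then $\Div(\zeta_Y^*(s_i))=Y^*_{w_i}$ with multiplicity one along each component, and each $\lambda_i$ already lies in $C_{\Ycal,w}$ (witnessed by $s_i$). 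Recall also that $C_{\Ycal,w}$ is a saturated submonoid of $X^*(T)$: saturation is built into its definition, and it is stable under addition since $\Ycal^*_w$ is irreducible (so products of nonzero sections are nonzero).

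Now take $\lambda\in C_{Y,w}$ and pick $N\geq 1$ and $0\neq f\in H^0(Y^*_w,\Lscr_Y(N\lambda))$. The first step is to clear the boundary divisors: for each $i$, let $a_i\geq 0$ be the minimum, over the irreducible components $Z$ of $Y^*_{w_i}$, of the order of vanishing of $f$ along $Z$, and set
\[
f':=f\cdot\prod_{i=1}^n\zeta_Y^*(s_i)^{-a_i},\qquad \mu:=N\lambda-\sum_{i=1}^n a_i\lambda_i .
\]
Since the divisors $Y^*_{w_i}$ pairwise share no irreducible component — their images $\overline{\Ycal}_{w_i}$ meet in codimension $\geq 2$ and $\zeta_Y$ is smooth — subtracting $\sum_i a_i Y^*_{w_i}$ from $\Div(f)$ keeps it effective, so $f'$ is a nonzero element of $H^0(Y^*_w,\Lscr_Y(\mu))$. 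Furthermore, by the choice of $a_i$, for every $i$ there is a component of $Y^*_{w_i}$ along which $f'$ does not vanish; hence $f'|_{Y^*_{w_i}}$ is a nonzero section of $\Lscr_Y(\mu)$ on $Y^*_{w_i}$.

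The second step invokes the inductive input. The nonzero sections $f'|_{Y^*_{w_i}}$ show $\mu\in C_{Y,w_i}$ for every $i$; by~\eqref{item-prop3} this gives $\mu\in C_{\Ycal,w_i}$ for every $i$, hence $\mu\in\bigcap_{i=1}^n C_{\Ycal,w_i}$, and~\eqref{item-prop2} then yields $\mu\in C_{\Ycal,w}$. Since $C_{\Ycal,w}$ is a saturated submonoid of $X^*(T)$ containing $\mu$ and all the $\lambda_i$, it contains $N\lambda=\mu+\sum_i a_i\lambda_i$, and therefore $\lambda\in C_{\Ycal,w}$. This establishes $C_{Y,w}\subseteq C_{\Ycal,w}$ and completes the argument.

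The main point requiring care is the divisor bookkeeping when $X$ — and hence $Y^*_w$ — is reducible: one must divide $f$ by the pulled-back partial Hasse invariants in such a way that the result still restricts nontrivially to every $Y^*_{w_i}$, which is why one takes the \emph{minimal} vanishing order over the components of $Y^*_{w_i}$ rather than a single multiplicity. Apart from that, the proposition is a formal consequence of its hypotheses; the substance lies in verifying, for the strata occurring in Theorem~\ref{th-intro}, that separating systems exist~\eqref{item-prop1}, that $\bigcap_i C_{\Ycal,w_i}\subseteq C_{\Ycal,w}$~\eqref{item-prop2}, and that the cone equality can be propagated up the stratification~\eqref{item-prop3} — which is the task of the later sections.
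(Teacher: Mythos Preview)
Your proof is correct and takes a genuinely different route from the paper's. The paper argues by contradiction and infinite descent: assuming $\lambda\notin C_{\Ycal,w}$, hypothesis~\eqref{item-prop2} gives an $i$ with $\lambda\notin C_{\Ycal,w_i}=C_{Y,w_i}$, so any $f\in H^0(Y^*_w,\Lscr_Y(N\lambda))$ restricts to zero on $Y^*_{w_i}$ and is therefore divisible by $\zeta_Y^*(s_i)$; since $N\lambda-\lambda_i$ again lies outside $C_{\Ycal,w}$ the step repeats, and pigeonhole forces $f$ to be divisible by arbitrarily high powers of some fixed $\zeta_Y^*(s_j)$, a contradiction. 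Your argument is direct: you strip off the maximal powers of all the $\zeta_Y^*(s_i)$ at once, observe that the residual weight $\mu$ lies in $\bigcap_i C_{Y,w_i}=\bigcap_i C_{\Ycal,w_i}\subset C_{\Ycal,w}$, and then use that $C_{\Ycal,w}$ is a saturated submonoid containing the $\lambda_i$ to conclude $\lambda\in C_{\Ycal,w}$.

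What each approach buys: your version is constructive and avoids the iterated descent, at the cost of two small extra ingredients --- that $C_{\Ycal,w}$ is closed under addition (immediate from irreducibility of $\Ycal^*_w$) and that the $Y^*_{w_i}$ share no irreducible components (immediate from smoothness of $\zeta_Y$ and the fact that distinct $\Ycal^*_{w_i}$ meet in codimension $\geq 2$). The paper's version sidesteps this divisor bookkeeping but needs the pigeonhole/Noetherian step at the end. Both rest on the same normality and multiplicity-one facts recorded after Definition~\ref{def-partial-hasse-schubert}.
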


\begin{proof}
Let $\lambda \in C_{Y,w} $ and assume that $\lambda\notin  C_{\Ycal, w}$. Choose a nonzero $f\in H^0(Y^*_{w}, \Lscr_Y(N\lambda))$ for some $N\geq 1$. Let $\{s_i\in H^0(\Ycal^*_{w}, \Lscr_{\Ycal}(\lambda_i))\}_{i=1}^n$ be a separating system of partial Hasse invariants for $\Ycal_{w}$. By \eqref{item-prop2}, there exists $i\in \{1,...,n\}$ such that $\lambda \notin  C_{\Ycal, w_i}$. By~\eqref{item-prop3}, $\lambda \notin C_{Y,w_i}$. Hence $H^0(Y^*_{w_i}, \Lscr_Y(\lambda))=0$. Multiplication by $\zeta_Y^*(s_i)$ gives an exact sequence 
$0\to H^0(Y^*_{w}, \Lscr_Y(N\lambda-\lambda_i)) \to H^0(Y^*_{w}, \Lscr_Y(N\lambda)) \to H^0(Y^*_{w_i}, \Lscr_Y(N\lambda))$. 
Thus $H^0(Y^*_{w}, \Lscr_Y(N\lambda-\lambda_i)) \simeq H^0(Y^*_{w}, \Lscr_Y(N\lambda))$. In particular $N\lambda -\lambda_i \in C_{Y,w}$.

It is clear that $N\lambda - \lambda_i\notin  C_{\Ycal, w}$; otherwise $\lambda$ would also lie in $ C_{w}$. So we may repeat the same procedure to $N\lambda - \lambda_i$, but with possibly a different $i'\in \{1,...,n\}$. Hence there exists a sequence $(i_d)_{d\geq 1}$ with values in $\{1,...,n\}$ such that $N\lambda-\sum_{d=1}^m \lambda_{i_d} \in C_{Y,w}$ for all $m\geq 1$ and such that multiplication by $\prod_{d=1}^m s_{i_d}$ gives an isomorphism 
\begin{equation}
H^0(Y^*_{w}, \Lscr_Y(N\lambda-\sum_{d=1}^m \lambda_{i_d})) \stackrel{\sim}{\to} H^0(Y^*_{w}, \Lscr_Y(N\lambda)).
\end{equation}
There exists $j\in \{1,...,n\}$ such that $i_d=j$ for infinitely many $d\geq 1$. We have shown that $f$ is divisible by $s^m_j$ for all $m\geq 1$. This implies that $s_j$ is nowhere non-vanishing, contradiction.
\end{proof}

\begin{corollary}
\label{cor-codim-1}
Assume that \textnormal{\Prop~\ref{prop-codim-1}\eqref{item-prop1}-\eqref{item-prop2}} hold for all $w \in W$. Then $ C_{Y, w}  = C_{\Ycal,w} $ for all $w \in W$. In particular, $ C_Y =  C_{\Ycal}$.
\end{corollary}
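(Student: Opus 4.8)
The plan is a short induction on the length $\ell(w)$, with Proposition~\ref{prop-codim-1} carrying the inductive step and requiring no new ideas. First I would treat the base case $\ell(w)=0$: the unique such element is $w=e$, which has no lower neighbors, so the indexing family $\{w_i\}_{i=1}^n$ of Proposition~\ref{prop-codim-1} is empty. Then hypothesis~\eqref{item-prop1} holds vacuously, and hypothesis~\eqref{item-prop2}, being the assertion $\bigcap_{i\in\emptyset}C_{\Ycal,w_i}=X^*(T)\subseteq C_{\Ycal,e}$, forces $C_{\Ycal,e}=X^*(T)$. Since $\zeta$ --- hence $\zeta_Y$ --- is surjective, the inclusions $C_{\Ycal,e}\subseteq C_{Y,e}\subseteq X^*(T)$ of~\S\ref{subsec-cones} then collapse to $C_{Y,e}=C_{\Ycal,e}$. (Equivalently, the proof of Proposition~\ref{prop-codim-1} is simply vacuous for $w=e$, since there is then no $\lambda\in C_{Y,e}$ lying outside $C_{\Ycal,e}$ from which to derive a contradiction.)

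For the inductive step I would fix $w\in W$ with $\ell(w)=d\geq 1$ and assume $C_{Y,w'}=C_{\Ycal,w'}$ for every $w'\in W$ with $\ell(w')<d$. Each lower neighbor $w_i$ of $w$ satisfies $\ell(w_i)=d-1<d$, so the inductive hypothesis supplies precisely condition~\eqref{item-prop3} of Proposition~\ref{prop-codim-1}, while conditions~\eqref{item-prop1}--\eqref{item-prop2} hold for $w$ by the standing assumption of the Corollary. Proposition~\ref{prop-codim-1} then gives $C_{Y,w}=C_{\Ycal,w}$, which completes the induction and establishes $C_{Y,w}=C_{\Ycal,w}$ for all $w\in W$. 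To conclude $C_Y=C_{\Ycal}$ I would take $w=w_0$ and invoke the identities $C_X=C_Y=C_{Y,w_0}$ from~\S\ref{subsec-cones}, applied both to the given $\zeta$ and to the trivial map $X=\Xcal$, the latter yielding $C_{\Xcal}=C_{\Ycal}=C_{\Ycal,w_0}$.

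I do not anticipate any genuine obstacle at this level: the Corollary is a purely formal consequence of Proposition~\ref{prop-codim-1} by induction on the (bounded) length function, the only point worth spelling out being the empty-index-set bookkeeping in the base case. All the real difficulty lies upstream, in verifying Proposition~\ref{prop-codim-1}\eqref{item-prop1}--\eqref{item-prop2} for the groups at hand --- namely the existence of separating systems of partial Hasse invariants and the cone inclusion $\bigcap_i C_{\Ycal,w_i}\subseteq C_{\Ycal,w}$ --- which, as noted in~\S\ref{sec-observe}, fails for many strata, and is exactly what forces the weaker machinery (admissible strata, together with the constraint $C_Y\subseteq X^*_{+,I}(T)$) used to push Theorem~\ref{th-intro} beyond the cases of $\fp$-split type $A_1^n$ and type $A_1\times A_1$.
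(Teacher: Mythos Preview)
Your proposal is correct and is precisely the implicit argument the paper has in mind: the corollary is stated without proof because it follows immediately from Proposition~\ref{prop-codim-1} by induction on $\ell(w)$, with condition~\eqref{item-prop3} supplied at each step by the inductive hypothesis. Your treatment of the base case $w=e$ via the empty-index convention is the right bookkeeping, and the rest is exactly as you describe.
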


\begin{proposition}\label{colength1}
Let $w\in W$ be a lower neighbor of $w_0$.  Assume that
\begin{enumerate}[(a)]
\item \label{item-col1} The Picard group of $G$ is trivial.
\item \label{item-col2} $X_{+,I}(T)\cap  C_{w}  \subset  C $.
\item \label{item-col3} One has $C_{Y,w}  =  C_{\Ycal, w} $.
\end{enumerate}
Then one has the equality of cones $C_{Y}  = C_{\Ycal} $.
\end{proposition}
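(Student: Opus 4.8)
The plan is to prove the nontrivial inclusion $C_Y\subseteq C_\Ycal$; the reverse inclusion holds for free since $\zeta_Y$ is surjective. I would first set up the two ingredients that do all the work. On the one hand, because $\pi_{Y/X}\colon Y\to X$ is a flag variety bundle, the direct image formula~\eqref{eq-llambda-vlambda} together with the vanishing of $\Vscr_\Xcal(\lambda)$ for non-$I$-dominant $\lambda$ (\S\ref{sec-auto-vector-bundles}) gives $C_Y\subseteq X^*_{+,I}(T)$. On the other hand, by~\eqref{item-col1} the element $w_0$ admits a separating system of partial Hasse invariants (see Remark~\ref{rmk-sep-basis}); since $w$ is a lower neighbor of $w_0$, one member of that system is a section $s\in H^0(\Ycal,\Lscr_\Ycal(\eta))$, for a suitable $\eta\in X^*(T)$, with $\Div(s)=\overline{\Ycal}_w$. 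As $\zeta$, hence $\zeta_Y$, is smooth (so $Y=Y^*_{w_0}$ is smooth and $\Div$ is preserved under pullback), $\zeta_Y^*(s)\in H^0(Y,\Lscr_Y(\eta))$ cuts out the reduced Cartier divisor $Y^*_w$; moreover the connected components of $Y$ are integral, being $P/B$-bundles over the connected components of $X$, which are themselves integral since they are smooth over $k$.

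Now assume for contradiction that $\lambda\in C_Y$ but $\lambda\notin C_\Ycal$. By the first ingredient $\lambda\in X^*_{+,I}(T)$, so hypothesis~\eqref{item-col2} forces $\lambda\notin C_{\Ycal,w}$ (else $\lambda\in X^*_{+,I}(T)\cap C_{\Ycal,w}\subseteq C_\Ycal$), whence by~\eqref{item-col3} also $\lambda\notin C_{Y,w}$, i.e.\ $H^0(Y^*_w,\Lscr_Y(n\lambda))=0$ for all $n\geq 1$. Choose $0\neq f\in H^0(Y,\Lscr_Y(N\lambda))$ for some $N\geq 1$. Its restriction to $Y^*_w$ vanishes, so in the exact sequence given by multiplication by $\zeta_Y^*(s)$,
\begin{equation*}
0\to H^0(Y,\Lscr_Y(N\lambda-\eta))\to H^0(Y,\Lscr_Y(N\lambda))\to H^0(Y^*_w,\Lscr_Y(N\lambda)),
\end{equation*}
the last term is $0$ and $f=\zeta_Y^*(s)\,f_1$ with $0\neq f_1\in H^0(Y,\Lscr_Y(N\lambda-\eta))$.

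The crucial point is that the same $s$ can be used again: from $f_1\neq 0$ we get $N\lambda-\eta\in C_Y\subseteq X^*_{+,I}(T)$, and $N\lambda-\eta\notin C_\Ycal$, for otherwise --- using that $\eta\in C_\Ycal$ (realized by $s$) and that $C_\Ycal$ is an additive saturated cone, $\overline{\Ycal}_{w_0}=\Ycal$ being integral --- we would get $N\lambda\in C_\Ycal$, hence $\lambda\in C_\Ycal$. So~\eqref{item-col2} and~\eqref{item-col3} again yield $N\lambda-\eta\notin C_{Y,w}$, and repeating the argument shows $f_1=\zeta_Y^*(s)\,f_2$; by induction $f$ is divisible by $\zeta_Y^*(s)^m$ for every $m\geq 1$. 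To conclude, restrict to a connected component $Y^\circ$ of $Y$ on which $f$ is nonzero. Since $\zeta$ is smooth and surjective on the corresponding component of $X$, the map $Y^\circ\to\Ycal$ is surjective, so $Y^*_w\cap Y^\circ$ is a nonempty closed subset of $Y^\circ$, proper because $\ell(w)<\ell(w_0)$ forces $\overline{\Ycal}_w\subsetneq\Ycal$. Hence $\Div(\zeta_Y^*(s)|_{Y^\circ})$ is a nonzero effective divisor on the integral scheme $Y^\circ$, yet $\Div(f|_{Y^\circ})\geq m\,\Div(\zeta_Y^*(s)|_{Y^\circ})$ for all $m$ --- absurd. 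This contradiction gives $\lambda\in C_\Ycal$.

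I expect the only delicate point to be the iteration in the third paragraph: one must check that $N\lambda-m\eta$ stays $I$-dominant at every stage, since it is precisely this (together with~\eqref{item-col2}) that lets a single lower-neighboring stratum control the whole argument --- exactly what is \emph{not} available in Proposition~\ref{prop-codim-1}, where one is instead forced to assume the equality of cones for all lower neighbors of $w$. The remaining steps (the direct-image input, the identity $\Div(\zeta_Y^*(s))=Y^*_w$ for smooth $\zeta_Y$, and the infinite-divisibility contradiction) are routine variants of the proof of Proposition~\ref{prop-codim-1}.
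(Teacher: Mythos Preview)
Your proof is correct and follows essentially the same route as the paper: use $C_Y\subset X^*_{+,I}(T)$ together with hypotheses~\eqref{item-col2}--\eqref{item-col3} to force $\lambda\notin C_{Y,w}$, divide by the partial Hasse invariant cutting out $\Ycal^*_w$ (available by~\eqref{item-col1}), check that the quotient still fails to lie in $C_\Ycal$, and iterate to an infinite-divisibility contradiction. You supply more detail than the paper on the iteration (verifying $I$-dominance persists) and on the endgame (restricting to an integral connected component meeting $Y^*_w$), but the architecture is identical.
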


\begin{proof}
Let $\lambda \in  C_{Y} $ and assume that $\lambda\notin  C_{\Ycal}$.  Fix a nonzero $f\in H^0(Y,\Lscr_Y(N\lambda))$ for some $N\geq 1$. Since $ C_{Y} \subset X_{+,I}(T)$, we deduce that $\lambda \notin C_{\Ycal, w} =C_{Y,w} $. By \eqref{item-col1}, we can find $\mu\in X^*(T)$ and a partial Hasse invariant $s\in H^0(\Ycal^*_w,\Lscr_{\Ycal}(\mu))$ such that $\Div(s)=\Ycal^*_w$. Since $N\lambda\notin C_{Y,w}$, we have $H^0(Y^*_w,\Lscr_Y(N\lambda))=0$. Hence $f$ restricts to zero along $Y^{*}_w$, and thus $f$ is divisible by $s':=\zeta_{Y}^{*}(s)$ ; there exists $g\in H^0(Y,\Lscr_Y(N\lambda-\mu))$ such that $f=s'g$. We have shown that $N\lambda-\mu\in C_{Y}$, hence $\lambda-\frac{\mu}{N}\in C_Y $.

It is clear that $\lambda-\frac{\mu}{N}\notin  C_{\Ycal} $, because otherwise $\lambda$ would also lie in $C_{\Ycal} $. Repeating this argument, we deduce that $f$ is divisible by $s'^{m}$ for all $m\geq 1$, which is a contradiction, as in the proof of \Prop~\ref{prop-codim-1}.
\end{proof}

\section{Example 1 : Groups of type $A_1^n$} 

\label{sec-Hilbert}
In this section, we study Question~\ref{q-strata-cones} for $\fp$-groups $G$ of type $A_1^n$. As a corollary, we deduce results about Hilbert modular varieties.

We prove Conjecture \ref{conj} when $G$ is of type $A_1^n$ (and $\Zcal$ of Borel-type). Therefore, in all this section one has $X=Y$. If $G$ is $\fp$-split or if $G$ splits over the quadratic extension $\FF_{p^2}$, then we show that $ C_{\Ycal, w}  =  C_{Y,w}$ holds for all $w\in W$, which gives a complete answer to Question \ref{q-strata-cones}.

In the general case, we define a set of admissible strata for which one has $C_{\Ycal,w} = C_{Y,w} $. However, we conjecture that not all $w$ satisfy this equality of cones.

\subsection{Notation}
\label{thegroup}
Let $n\geq 1$ be an integer; let $n=n_1+\cdots +n_r$,  be a partition of $n$ with $n_i\geq 1$ for all $i=1,...,r$. Consider the $\FF_p$-reductive group $G$ defined by
\begin{equation}
G:=G_1\times ...\times G_r, \quad G_i:=\Res_{\FF_{p^{n_i}}/ \FF_p}(SL_{2,\FF_{p^{n_i}}})
\end{equation}
Define $N_m=\sum_{i=1}^{m} n_i$ for all $1\leq m\leq r$ and $N_0:=0$. Denote again by $\sigma$ the permutation of $\{1,...,n\}$ defined as a product $\sigma=c_1\cdots c_r$ where $c_i$ is the $n_i$-cycle $c_i=(N_i \ (N_i-1) \cdots (N_{i-1}+1))$ for $i=1,...,r$. There is an isomorphism
\begin{equation}\label{Gk isom}
G_k\simeq SL_{2,k}^n
\end{equation}
such that the action of $\sigma \in \Gal(k/\FF_p)$ on $G(k)\simeq SL_2(k)^n$ is given by
\begin{equation}
{}^\sigma (x_1,...,x_n):=(\varphi(x_{\sigma(1)}),\varphi(x_{\sigma(2)}),...,\varphi(x_{\sigma(n)})).
\end{equation}

Let $T\subset SL_{2,k}$ be the diagonal torus. We identify $X^*(T)=\ZZ$ by sending $m\in \ZZ$ to the character $\diag(x,x^{-1})\mapsto x^m$.
Define $\widetilde{T}:=T\times...\times T \subset G_k$ and identify similarly $X^*(\widetilde{T})=\ZZ^n$. Let $B\subset SL_{2,k}$ be the Borel subgroup of lower-triangular matrices, and define $\widetilde{B}:=B\times...\times B\subset G$. Denote by $\widetilde{B}_-$ the opposite Borel. The Weyl group of $G$ is $W=S_2\times ... \times S_2$.

Let $\Zcal$ be the Borel-type zip datum $(G,\widetilde{B},\widetilde{T},\widetilde{B}_-,\widetilde{T},\varphi)$. Denote by $\Xcal$ the corresponding stack of $G$-zips. Fix a map $\zeta:X\to \Xcal$ satisfying the assumptions of Conjecture~\ref{conj}. Define a Zariski open subset $U\subset SL_2$ as the non-vanishing locus of the function
\begin{equation}
h:SL_{2,k}\to \AA_k^1, \quad h:\left(\begin{matrix}
a&b\\
c&d
\end{matrix} \right)\mapsto a.
\end{equation}
Denote by $Z\subset SL_{2,k}$ the zero locus of $h$ (note that $Z$ is a reduced subscheme). Identify the elements of $W$ with subsets $S\subset\{1,...,n\}$ by the map
\begin{equation}
W\to \Pcal(\{1,...,n\}), \quad \tau=(\tau_1,...,\tau_n) \mapsto \{i\in \{1,...,n\}: \tau_i=1\}.
\end{equation}
For a subset $S\subset \{1,...,n\}$, write
\begin{equation}\label{Sdisjoint}
S=S_1\sqcup...\sqcup S_r, \quad S_i:=S\cap \{N_{i-1}+1,...,N_i\}.
\end{equation}
The zip stratum corresponding to a subset $S\subset \{1,...,n\}$ is defined by:
\begin{equation}
G_S:=\prod_{i=1}^n G_{S,i}
\end{equation}
where $G_{S,i}:=U$ if $i\in S$ and $G_{S,i}:=Z$ if $i\notin S$. For a subset $S\subset \{1,...,n\}$, denote by $\Xcal_S:=\left[E\backslash G_S\right]\subset \GZip^\Zcal$ and $X_S\subset X$ the corresponding locally closed subsets, endowed with the reduced structure, and define similarly $\Xcal^*_S$ and $X^*_S$ as their respective Zariski closures.

Write $C_S$ and $C_{X,S}$ for the cones corresponding to the zip stratum $G_S$, as defined in section \ref{subsec-cones}. Denote by $e_1,...,e_n\in \ZZ^n$ the natural basis of $\ZZ^n$. For any subset $S\subset \{1,...,n\}$, we define a $\QQ$-basis $\Bcal_S=(\beta_{1,S},...,\beta_{n,S})$ of $\QQ^n$ by:
\begin{equation}
\beta_{i,S}:=\begin{cases}
e_i-pe_{\sigma(i)}& \textrm{ if } i\in S, \\
-e_i-pe_{\sigma(i)} & \textrm{ if } i\notin S.
\end{cases}
\end{equation}
The cone $ C_S$ is the set of characters $\lambda\in X^*(\widetilde{T})$ such that
\begin{equation}\label{lambda dec}
\lambda = \sum_{i\in S} a_i \beta_{i,S}
\end{equation}
where $a_i\in \NN$ for all $i\in S$ and $a_i\in \ZZ$ for all $i\notin S$.

\subsection{The result}
\label{sec-result-hilbert}
Let $d\in \ZZ_{\geq 1}$ be an integer. For any subset $R\subset \ZZ$ consisting of $d$ consecutive integers, the map $\phi_R:R \to \ZZ/d\ZZ$, $k\mapsto \overline{k}$ is a bijection. Let $\ZZ/d\ZZ$ act on itself by addition. Then $\phi_R$ yields a natural action of $\ZZ/d\ZZ$ on the following objects:
\begin{enumerate}[(a)]
\item The set $R$ itself.
\item The powerset $\Pcal(R)$.
\item The set of pairs $(S,j)$ where $S\subset R$ and $j\in S$.
\end{enumerate}

\begin{definition}\label{defadm} \
Let $d\geq1$ be an integer and $R\subset \ZZ$ a subset consisting of $d$ consecutive numbers.
\begin{enumerate}[(1)]
\item A normalized admissible pair of $R$ is a pair $(S,x_r)$ such that $S\subset R$ is of the form $S=\{x_1,...,x_r\}$ with $x_1<...<x_r$ and $x_{i+1}-x_i$ odd for all $1\leq i \leq r-1$.
\item An admissible pair of $R$ is a pair $(S,j)$ that is in the $\ZZ/d\ZZ$-orbit of a normalized admissible pair for $R$.
\item A $G$-admissible pair is a pair $(S,j)$ such that $j\in S_m$ for some $1\leq m\leq r$ (notation as in (\ref{Sdisjoint})) and $(S_m,j)$ is an admissible pair of $\{N_{m-1}+1,...,N_m\}$.
\item A subset $S\subset \{1,...,n\}$ is $G$-admissible if the pair $(S,j)$ is $G$-admissible for all $j\in S$.
\end{enumerate}
\end{definition}

\begin{rmk} The following give examples of $G$-admissible subsets of $\{1,2, \ldots , n\}$:
\begin{enumerate}[(a)] 
\item 
\label{item-adm-singleton}
The singleton $\{i\}$ for all $i\in \{1,...,n\}$;
\item
\label{item-adm-whole-set}
The set $\{1,...,n\}$ itself;
\item
\label{item-adm-frob-trivial}
If $\sigma(i)=i$ for all $i\in \{1,...,n\}$ (equivalently if $r=n$), then every subset $S$ of $\{1,2, \ldots, n\}$ is $G$-admissible.
\end{enumerate}
The cases~\eqref{item-adm-singleton},~\eqref{item-adm-whole-set},~\eqref{item-adm-frob-trivial} correspond respectively to one-dimensional strata, the top-dimensional stratum and the case that $G$ is $\fp$-split.
\end{rmk}

\begin{theorem}\label{mainthm}
Let $\zeta:X\to \Xcal$ be a map satisfying \textnormal{\ref{conj}\eqref{hodgetype}-\eqref{item-assume-loc-const}}. For each $G$-admissible $S\subset \{1,\ldots,n\}$, one has $C_{Y,S} = C_{\Ycal, S}  $. In particular, $C_X  = C_\Xcal$.
\end{theorem}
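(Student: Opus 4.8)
The plan is to run an induction on $\ell(w)$ (equivalently $|S|$), using Proposition~\ref{prop-codim-1} as the inductive engine, and to handle the combinatorial subtlety of which strata satisfy its hypotheses via the notion of $G$-admissibility. First I would set up the separating system: for a group of type $A_1^n$ with $\Zcal$ of Borel-type one has $X=Y$ and, by Remark~\ref{rmk-sep-basis}(a), every stratum $\Ycal_S$ admits an explicit separating system of partial Hasse invariants $\{s_i\}$, one for each lower neighbor $S\setminus\{i\}$ with $i\in S$; concretely $s_i$ should be (a power of) the pullback of the function $h$ on the $i$-th factor, whose divisor is $\Ycal^*_{S\setminus\{i\}}$. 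So hypothesis~\ref{prop-codim-1}\eqref{item-prop1} holds for \emph{every} $S$. The base case $|S|=1$ is exactly Proposition~\ref{onlyone} (or the $\ell(w)=1$ case), and the one-element subsets are $G$-admissible, so the induction starts.

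The inductive step: suppose $S$ is $G$-admissible and all its lower neighbors $S\setminus\{i\}$, $i\in S$, are $G$-admissible as well (this closure property under passing to lower neighbors is the key combinatorial fact I would need to check from Definition~\ref{defadm} — removing an element of an admissible pair keeps the remaining consecutive-gap-parity condition intact, after re-normalizing by the $\ZZ/d\ZZ$-action on each block $S_m$). By the inductive hypothesis, $C_{Y,S\setminus\{i\}}=C_{\Ycal,S\setminus\{i\}}$ for all $i\in S$, which is~\ref{prop-codim-1}\eqref{item-prop3}. It then remains to verify~\ref{prop-codim-1}\eqref{item-prop2}, namely $\bigcap_{i\in S} C_{\Ycal,S\setminus\{i\}}\subset C_{\Ycal,S}$. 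This is a purely combinatorial statement about the explicit cones $C_S$ described at the end of \S\ref{thegroup} in terms of the bases $\Bcal_S$: using Lemma~\ref{change} I may pass to $\tilde G=SL_2^{\,(\cdot)}$ and work with the concrete description~\eqref{lambda dec}. Here is where $G$-admissibility is designed to make the inclusion true — one shows that if $\lambda$ lies in every $C_{S\setminus\{i\}}$ then, writing $\lambda$ in the basis $\Bcal_S$, all the ``free'' coordinates indexed by $S$ are forced to be $\geq 0$, by propagating the sign conditions around the $\sigma$-cycles block by block; the parity condition $x_{i+1}-x_i$ odd in Definition~\ref{defadm}(1) is exactly what guarantees the sign does not flip inconsistently as one traverses a cycle. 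Granting this, Proposition~\ref{prop-codim-1} gives $C_{Y,S}=C_{\Ycal,S}$, completing the induction over all $G$-admissible $S$.

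Finally, for the last sentence $C_X=C_\Xcal$: since $X=Y$ and $C_X=C_Y=C_{Y,w_0}$, and the full set $\{1,\dots,n\}$ (i.e. $w=w_0$) is $G$-admissible by Remark (b), the equality $C_{Y,\{1,\dots,n\}}=C_{\Ycal,\{1,\dots,n\}}$ is the special case just proved, whence $C_X=C_\Xcal$.

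\textbf{Main obstacle.} The genuine work is the combinatorial inclusion~\ref{prop-codim-1}\eqref{item-prop2} for admissible $S$: verifying $\bigcap_{i\in S} C_{S\setminus\{i\}}\subset C_S$ by an explicit sign-propagation argument in the basis $\Bcal_S$, and confirming that admissibility is preserved under passing to lower neighbors so that the induction is well-founded. A secondary (but expected-to-be-routine) point is checking that the divisor of each candidate partial Hasse invariant is precisely the single stratum closure $\Ycal^*_{S\setminus\{i\}}$ with multiplicity one, so that Proposition~\ref{prop-codim-1}'s separating-system hypothesis is literally met; this uses that $\zeta$ is smooth so divisor multiplicities are preserved under pullback.
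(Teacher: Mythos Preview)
Your proposal has a genuine gap: the claimed closure property that ``removing an element of an admissible $S$ keeps it admissible'' is false in general. If $(S,j)$ is a normalized admissible pair with $S=\{x_1,\dots,x_r\}$ and consecutive gaps $x_{i+1}-x_i$ all odd, then removing a middle element $x_i$ makes the new gap $x_{i+1}-x_{i-1}$ \emph{even}; so $S\setminus\{x_i\}$ need not be $G$-admissible, and you cannot invoke the inductive hypothesis on all lower neighbors. This already breaks the well-foundedness of your induction, and with it the verification of hypothesis~\ref{prop-codim-1}\eqref{item-prop3}. The combinatorial inclusion~\ref{prop-codim-1}\eqref{item-prop2} you identify as the ``main obstacle'' is likewise not established in the paper and is not obviously true; indeed Remark~\ref{rmk-variants-conj}\eqref{item-fail-strata} suggests the cone equality can fail for non-admissible strata, so one cannot expect the neighbor-cone intersection to behave well for arbitrary lower neighbors.

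The paper takes a different route. It does not apply Proposition~\ref{prop-codim-1} directly to the $A_1^n$ case. Instead it proves the sharper vanishing statement Proposition~\ref{admthm}: if $(S,j)$ is a $G$-admissible \emph{pair} and the $j$-th coordinate of $\lambda$ in $\Bcal_S$ is negative, then $H^0(S,\lambda)=0$. The induction is on $|S|$, but at each step one removes only \emph{specific} elements (an element outside the block $S_m$, or the extremal elements $\alpha_1$ or $\alpha_{s_m}=j$ of $S_m$) chosen so that the resulting pair $(S\setminus\{\cdot\},j)$ remains admissible; this is controlled by the explicit change-of-basis formula~\eqref{bigeq} and the parity Lemma~\ref{parity}. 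A separate case split on the parity of $n_m+s_m$ and the auxiliary Lemma~\ref{lemmainf} (vanishing after sufficiently many shifts) are needed to close the argument. Theorem~\ref{mainthm} then follows immediately: if $\lambda\notin C_{\Ycal,S}$ some coordinate $x_i$ is negative, $(S,i)$ is admissible by definition of $G$-admissible \emph{set}, and Proposition~\ref{admthm} gives $H^0(S,N\lambda)=0$ for all $N$.
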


Theorem \ref{mainthm} will be proved at the end of \S\ref{sec-Hilbert}, as a corollary of Proposition \ref{admthm} below. 
As an application, let $F$ be a totally real number field and let $X$ be the special fiber of a Hilbert modular variety attached to $F$. Using Lemma \ref{change}, we may reduce to Theorem \ref{mainthm}. Hence:

\begin{corollary}\label{cor Hilbert} If $[F:\QQ]>1$, then \textnormal{Conjecture \ref{conj}} holds for $X$. Explicitly:
\begin{equation}
 C_X = C_\Xcal =\left\{ \sum_{i=1}^n a_i(e_i-pe_{\sigma(i)}), \ a_i\in \NN \right\}.
\end{equation}
\end{corollary}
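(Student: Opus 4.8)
The plan is to reduce the statement to Theorem~\ref{mainthm} and then to unwind the explicit description of the Zip cone $C_\Xcal$ from \S\ref{thegroup}. First I would recall that for a Hilbert modular variety $X$ attached to a totally real field $F$ with $[F:\QQ]=n>1$, the associated $\fp$-group $G$ (coming from $\GG=\Res_{F/\QQ}GL_2$, intersected with the relevant level structure) has adjoint group of type $A_1^n$, and the Shimura datum is of Hodge type, so the zip datum $\Zcal$ is of connected-Hodge-type; this verifies \ref{conj}\eqref{hodgetype}. The map $\zeta:X\to\GZip^\Zcal$ is smooth by \cite{ZhangEOHodge} and surjective on every connected component by \cite{Lee-newton-strata-nonempty,Kisin-Honda-Tate-theory-Shimura-varieties}, giving \ref{conj}\eqref{item-assume-smooth}. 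For \ref{conj}\eqref{item-assume-loc-const}, since $n>1$ the classical Koecher principle applies: as noted at the end of \S\ref{sec-shimura}, $X$ is its own flag space ($\Zcal$ is of Borel type, so $X=Y$), and each proper flag stratum $Y^*_w$ is proper because it does not meet the toroidal boundary, hence pseudo-complete. Thus all hypotheses of Theorem~\ref{mainthm} are in force.

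Next I would apply Lemma~\ref{change} to replace $G$ by a convenient group of the same type: the equality of cones $C_X=C_\Xcal$ depends only on the type of $G$, not on $G$ itself, so we may assume $G=G_1\times\cdots\times G_r$ with $G_i=\Res_{\FF_{p^{n_i}}/\FF_p}(SL_{2,\FF_{p^{n_i}}})$ as in \S\ref{thegroup}, where $n=n_1+\cdots+n_r$ is the partition of $n$ into the lengths of the Frobenius orbits on the set of embeddings $F\hookrightarrow\overline\QQ_p$ (equivalently, on the $n$ factors of $G_{\overline\FF_p}^{\ad}$). With this model, $\sigma$ is the permutation described in \S\ref{thegroup} and $X^*(\widetilde T)=\ZZ^n$.

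Now invoke Theorem~\ref{mainthm}: the full set $S=\{1,\dots,n\}$ is $G$-admissible (Remark following Definition~\ref{defadm}\eqref{item-adm-whole-set}), and it indexes the top-dimensional stratum, so $C_{Y,w_0}=C_{\Ycal,w_0}$; since $C_X=C_Y=C_{Y,w_0}$ and likewise $C_\Xcal=C_{\Ycal,w_0}$, this gives $C_X=C_\Xcal$. Finally, for the explicit formula, I would specialize the description of $C_S$ from the end of \S\ref{thegroup} to $S=\{1,\dots,n\}$: in that case the basis $\Bcal_S$ has $\beta_{i,S}=e_i-pe_{\sigma(i)}$ for every $i$, and the cone $C_S$ consists of the $\lambda=\sum_{i\in S}a_i\beta_{i,S}$ with all $a_i\in\NN$ (there are no indices $i\notin S$), which is exactly $\left\{\sum_{i=1}^n a_i(e_i-pe_{\sigma(i)}),\ a_i\in\NN\right\}$.

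The one genuine point requiring care — and the step I expect to be the main obstacle — is the identification of the abstract group $G$ attached to the Hilbert modular variety with the explicit model of \S\ref{thegroup}, in particular matching the arithmetic of $F$ at $p$ (the decomposition of $p$ into primes of $F$, with residue degrees $n_i$) with the Frobenius-orbit structure encoded by $\sigma$, and checking that passing to $\widetilde G$ and applying Lemma~\ref{change} legitimately reduces to this model. Everything else is either a direct citation (smoothness, surjectivity, Koecher) or a substitution into the already-established formula for $C_S$.
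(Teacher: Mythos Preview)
Your proposal is correct and follows essentially the same approach as the paper: reduce via Lemma~\ref{change} to the explicit simply-connected model of \S\ref{thegroup}, then apply Theorem~\ref{mainthm} with $S=\{1,\dots,n\}$ and read off the cone from the description of $C_S$. The paper's own argument is a single sentence (``Using Lemma~\ref{change}, we may reduce to Theorem~\ref{mainthm}''), with the verification of hypotheses \ref{conj}\eqref{item-assume-smooth}--\eqref{item-assume-loc-const} deferred to \S\ref{sec-shimura} and the remarks following the corollary; your more explicit unpacking of these points is fine but not additional content.
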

\begin{rmk} When $X$ is a modular curve (so that $F=\QQ$), $X$ fails to satisfy Koecher's principle and Corollary \ref{cor Hilbert} is false, because $X$ is affine and the Hodge line bundle is both ample and anti-ample on $X$. However, Corollary \ref{cor Hilbert} trivially holds for the compactified modular curve $X^{\tor}$.
\end{rmk}
\begin{rmk} When $[F:\QQ]>1$, all EO strata of $X$ of codimension $> 0$ are proper. Hence, in this case, assumption~\ref{conj}\eqref{item-assume-loc-const} already follows from the classical Koecher principle for $X$ (independently of \cite{Lan-Stroh-stratifications-compactifications}).  
\end{rmk}
\subsection{Proof of \textnormal{Theorem \ref{mainthm}}}
Assume $\lambda\in \QQ^n$ is a quasi-character expressed in the basis $\Bcal_S$. Choose an element $j\in S$, and consider the subset $S\setminus \{j\}$, and the corresponding basis $\Bcal_{S\setminus \{j\}}$ of $\QQ^n$. We want to decompose $\lambda$ in the basis $\Bcal_{S\setminus \{j\}}$.

For this, it suffices to determine the decomposition of the vector $\beta_{j,S}=e_j-pe_{\sigma(j)}$ in $\Bcal_{S\setminus \{j\}}$. Write $s_i:=|S_i|$ for $i=1,...r$ and $s:=|S|=\sum_{i=1}^r s_i$. Let $m\in \{1,...,r\}$ such that $j\in S_m$. Let $a_1,...,a_n\in \QQ$ be the unique rational numbers such that
\begin{equation}
\beta_{j,S}=\sum_{i=1}^n a_i \beta_{i,S\setminus \{j\}}.
\end{equation}

One sees immediately that $a_i=0$ for $j\notin \{N_{m-1}+1,...,N_m\}$. For $1\leq a \leq b \leq n$, we define $\gamma(a,b,S) \in \{\pm 1\}$ by the formula:
\begin{equation}
\gamma(a,b,S):=(-1)^{|\{x\in S, \ a\leq x\leq b\}|}.
\end{equation}
For $d\geq 1$, $x=(x_1,...,x_d) \in \QQ^d$ and $y=(y_1,...,y_d)\in \QQ^d$, define $x*y$ as the vector $(x_1y_1,...,x_d y_d)$. Then we have the following formula:

\begin{equation}\label{bigeq}
\delta \left(\begin{matrix}
a_{N_{m-1}+1} \\ a_{N_{m-1}+2} \\ \vdots \\ a_{j-2} \\ a_{j-1} \\ a_j \\ a_{j+1} \\ a_{j+2} \\ \vdots \\ a_{N_m}
\end{matrix} \right)=
\left(\begin{matrix}
(-1)^{s_m+N_m+j} \\ (-1)^{s_m+N_m+j+1} \\ \vdots \\ (-1)^{n_m+s_m-1} \\ (-1)^{n_m+s_m} \\ 1 \\ -1 \\ 1 \\ \vdots \\ (-1)^{N_m+j}
\end{matrix} \right)* \left(\begin{matrix}
\gamma(N_{m-1}+1,j-1,S) \\ \gamma(N_{m-1}+2,j-1,S) \\ \vdots \\ \gamma(j-2,j-1,S) \\ \gamma(j-1,j-1,S) \\ 1 \\ 1 \\ \gamma(j+1,j+1,S) \\ \vdots \\ \gamma(j+1,N_m-1,S)
\end{matrix} \right)* \left(\begin{matrix}
2p^{j-N_{m-1}-1} \\ 2p^{j-N_{m-1}-2} \\ \vdots \\ 2p^{2} \\2 p \\ p^{n_m}+(-1)^{s_m+n_m+1} \\ 2p^{n_m-1} \\ 2p^{n_m-2} \\ \vdots \\ 2p^{j-N_{m-1}}
\end{matrix} \right) \end{equation}
where $\delta=p^{n_m}+(-1)^{s_m+n_m}$.

For the rest of the proof, we abbreviate $H^0(Y^*_S, \Lscr_Y(\lambda)):=H^0(S,\lambda)$.
\begin{lemma}\label{lemmainf}
Let $S\neq \emptyset$ be a subset and $i\in S$. Let $\lambda \in \ZZ^n$ with coordinates $(x_1,...,x_n)$ in $\Bcal_S$. Then there exists $M$ (depending on $S$ and $\lambda$) such that for $m\geq M$, one has
$H^0(S,\lambda-m \beta_{i,S})=0$.
\end{lemma}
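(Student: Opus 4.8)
The plan is to argue by contradiction, using induction on $|S|$ together with the partial Hasse invariant attached to the lower neighbor $S\setminus\{i\}$. As a preliminary reduction, note that $\beta_{i,S}$ is the weight of a partial Hasse invariant $s_i$ on $\Ycal^*_S$ whose divisor is $\Ycal^*_{S\setminus\{i\}}$ (the "simple separating system" of \S\ref{thegroup}), so $\beta_{i,S}\in C_{\Sbt,S}\subseteq C_{\Ycal,S}\subseteq C_{Y,S}$; hence multiplication by $\zeta_Y^*(s_i)$ embeds $H^0(S,\lambda-m\beta_{i,S})$ into $H^0(S,\lambda-(m-1)\beta_{i,S})$, so the set of $m$ with $H^0(S,\lambda-m\beta_{i,S})\neq 0$ is downward closed, and it suffices to show it is bounded above. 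We may also assume $\lambda\in C_{Y,S}$ (otherwise $\lambda-m\beta_{i,S}\notin C_{Y,S}$ for \emph{all} $m\geq 0$, since $C_{Y,S}+\beta_{i,S}\subseteq C_{Y,S}$, and there is nothing to prove) and, replacing $Y^*_S$ by one of its irreducible components carrying the sections in question, that $Y^*_S$ is integral (and normal, since the strata closures are normal).

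For the base case $|S|=1$, say $S=\{i\}$, Proposition~\ref{onlyone} gives $C_{Y,\{i\}}=C_{\Ycal,\{i\}}=C_{\{i\}}$, and the explicit description of $C_{\{i\}}$ from \S\ref{thegroup} shows that $\lambda-m\beta_{i,\{i\}}$ has $i$-th coordinate $x_i-m$ in $\Bcal_{\{i\}}$, which is negative for $m>x_i$; hence $H^0(\{i\},\lambda-m\beta_{i,\{i\}})=0$ for $m>x_i$.

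For the inductive step, suppose $0\neq f_m\in H^0(S,\lambda-m\beta_{i,S})$ for $m$ in an unbounded set. Set $\tau_m:=f_m\cdot\zeta_Y^*(s_i)^m\in H^0(Y^*_S,\Lscr_Y(\lambda))$; this is nonzero because $\zeta_Y^*(s_i)$ is not a zero divisor on the reduced $Y^*_S$, and it vanishes along $Y^*_{S\setminus\{i\}}$ to order $\geq m$. Let $N_m$ be the exact order to which $\tau_m$ is divisible by $\zeta_Y^*(s_i)$; then $N_m\to\infty$, while $\theta_m:=\tau_m/\zeta_Y^*(s_i)^{N_m}\in H^0(Y^*_S,\Lscr_Y(\lambda-N_m\beta_{i,S}))$ is \emph{not} divisible by $\zeta_Y^*(s_i)$, so (using normality of $Y^*_S$ and that $\Div(\zeta_Y^*(s_i))=Y^*_{S\setminus\{i\}}$ is reduced) its restriction to $Y^*_{S\setminus\{i\}}$ is nonzero. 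On the other hand, expressing the weight $\lambda-N_m\beta_{i,S}$ in the basis $\Bcal_{S\setminus\{i\}}$ by means of formula~\eqref{bigeq}, one sees that its coordinate in the "$i$-th direction" is a fixed quantity minus $N_m c_i$ with $c_i>0$, hence tends to $-\infty$; applying the inductive hypothesis for the smaller stratum $S\setminus\{i\}$ then yields $H^0(Y^*_{S\setminus\{i\}},\Lscr_Y(\lambda-N_m\beta_{i,S}))=0$ for $m$ large, contradicting $\theta_m|_{Y^*_{S\setminus\{i\}}}\neq 0$.

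The delicate point — and the part I expect to be the main obstacle — is setting up the inductive hypothesis so that it genuinely applies on $Y^*_{S\setminus\{i\}}$: after the base change \eqref{bigeq} the weight there is not simply a multiple of one basis vector $\beta_{i',S\setminus\{i\}}$, so one must carry through the induction a version of the statement that permits the weight to drift along the directions produced by \eqref{bigeq}, which requires a careful sign analysis of the coefficients in that formula (and likewise for the other lower neighbors $S\setminus\{j\}$, $j\neq i$, where by contrast the basis vectors $\beta_{k,S}$ with $k\neq j$ are unchanged and the hypothesis transports verbatim). Because $Y^*_S$ is only pseudo-complete rather than proper, one cannot shortcut this bookkeeping by appealing to finite-dimensionality of $H^0(Y^*_S,\Lscr_Y(\lambda))$; the argument instead leans on the exact divisibility order by $\zeta_Y^*(s_i)$ and on normality of the strata closures, with pseudo-completeness entering only through Proposition~\ref{onlyone} in the base case.
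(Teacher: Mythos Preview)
Your inductive step has a genuine gap that you yourself flag but do not close. By choosing the Hasse invariant $s_i$ and restricting to the lower neighbor $Y^*_{S\setminus\{i\}}$, you land on a stratum that no longer contains $i$, so the inductive hypothesis of the lemma (which requires the distinguished index to lie in the subset) is simply unavailable there. As you note, the change of basis \eqref{bigeq} spreads $\beta_{i,S}$ over all directions of $\Bcal_{S\setminus\{i\}}$, and the family of weights $\lambda-N_m\beta_{i,S}$ is not a ray $\mu-m'\beta_{i',S\setminus\{i\}}$ for any single $i'\in S\setminus\{i\}$. Strengthening the inductive statement to accommodate this multi-directional ``drift'' (with the sign analysis you allude to) amounts to proving a version of Proposition~\ref{admthm} before the present lemma, which is circular since Proposition~\ref{admthm} invokes Lemma~\ref{lemmainf}.

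The paper sidesteps this entirely by the maneuver you mention parenthetically and then abandon: pick $j\in S\setminus\{i\}$ and use the Hasse invariant $h_j$ with divisor $\Ycal^*_{S\setminus\{j\}}$. Since $i\in S\setminus\{j\}$ and $\beta_{i,S}=\beta_{i,S\setminus\{j\}}$, the inductive hypothesis applies verbatim on $Y^*_{S\setminus\{j\}}$, giving $H^0(S\setminus\{j\},\lambda-m\beta_{i,S})=0$ for $m\gg0$. The multiplication-by-$\zeta_Y^*(h_j)$ exact sequence then yields $H^0(S,\lambda-m\beta_{i,S})\simeq H^0(S,\lambda-m\beta_{i,S}-\beta_{j,S})$ for such $m$; iterating, one drives the $j$-th $\Bcal_S$-coordinate negative \emph{without ever changing basis}. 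Doing this in turn for every $j\in S\setminus\{i\}$ (the $i$-th coordinate $x_i-m$ being already negative for large $m$) reduces to a weight with all $S$-coordinates negative in $\Bcal_S$, and the paper concludes via assumption~\ref{conj}\eqref{item-assume-loc-const}. Your base case through Proposition~\ref{onlyone} is fine and agrees with the paper's.
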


\begin{proof}
By induction on $s=|S|$. Let $m\in \{1,...,r\}$ such that $i\in S_m$. By~\ref{conj}\eqref{item-assume-loc-const}, the result is clear if $s=1$. So assume $s>1$. Also, we may assume $S_m=\{x_1,...,x_{s_m}\}$ with $x_1<...<x_{s_m}$ and $i=x_1$. Let $j\in S-\{i\}$. By induction, there exists $M(\lambda,j)\geq 1$ such that
$H^0(S-\{j\},\lambda-m \beta_{i,S})=0$
for $m\geq M(\lambda,j)$. 

Consider the unique (up to scalar) non-zero $h_j\in H^0(\Ycal^*_S, \beta_{j,S})$. The vanishing locus of $h_j$ is exactly $\Ycal^*_{S-\{j\}}$, and that of $\zeta^*(h_j)$ is $X^*_{S-\{j\}}$ by smoothness of $\zeta$. Multiplication by $\zeta^*(h_j)$ yields a short exact sequence of sheaves
$$0\to \Lscr_Y(\lambda-m\beta_{i,S}-\beta_{j,S})|_{Y^*_S}\to \Lscr_Y(\lambda-m\beta_{i,S})|_{Y^*_S}\to \Lscr_Y(\lambda-m\beta_{i,S})|_{Y^*_{S-\{j\}}}\to 0$$
and a long exact sequence of cohomology:
$$0\to H^0(S,\lambda-m\beta_{i,S}-\beta_{j,S}) \to  H^0(S,\lambda-m\beta_{i,S})\to H^0(S-\{j\},\lambda-m\beta_{i,S})\to...$$
Hence for $m\geq M(\lambda,j)$, one has an isomorphism
\begin{equation}
H^0(S,\lambda-m\beta_{i,S}-\beta_{j,S}) \simeq  H^0(S,\lambda-m\beta_{i,S})
\end{equation}
Now there exists an integer $M(\lambda-\beta_{j,S},j)\geq M(\lambda,j)$ such that 
\begin{equation}
H^0(S-\{j\},\lambda-\beta_{S,j}-m \beta_{i,S})=0
\end{equation}
for $m\geq M(\lambda-\beta_{j,S},j)$. Applying the exact sequence above for this character shows that
$$H^0(S,\lambda-m\beta_{i,S}-2\beta_{j,S}) \simeq  H^0(S,\lambda-m\beta_{i,S}-\beta_{j,S})\simeq  H^0(X^*_S,\lambda-m\beta_{i,S})$$
for $m\geq M(\lambda-\beta_{j,S},j)$. Continuing this way, it is clear that we can find $M'(\lambda,j)\geq 1$ such that for $m\geq M'(\lambda,j)$, there exists $\lambda'$ with coordinates $(x'_1,...,x'_n)$ in $\Bcal_S$ such that $x'_j<0$ and
$
H^0(S,\lambda-m\beta_{i,S})\simeq H^0(S,\lambda'-m\beta_{i,S})$.
Hence for large $m$, there exists $\mu$ with coordinates $(y_1,...,y_n)$ in $\Bcal_S$ such that $y_j<0$ for all $j\in S$ and $H^0(S,\lambda-m\beta_{i,S})\simeq H^0(S,\mu)$. By~\ref{conj}~\eqref{item-assume-loc-const}, this space is zero.
\end{proof}

\begin{lemma}\label{parity}
Let $d\geq1$ be an integer and $R\subset \ZZ$ a subset consisting of $d$ consecutive numbers. Let $(S,j)$ be a normalized admissible pair for $R$. Write $S=\{\alpha_1,...,\alpha_s\}$ with $s=|S|$ and $\alpha_1<...<\alpha_s$. Then the integer $\alpha_1+\alpha_s+s$ is odd.
\end{lemma}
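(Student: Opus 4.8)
The plan is to reduce Lemma~\ref{parity} to a one-line parity computation using only the consecutive-difference condition in the definition of a normalized admissible pair. First I would expand the telescoping sum
\[
\alpha_s - \alpha_1 = \sum_{i=1}^{s-1}(\alpha_{i+1}-\alpha_i).
\]
By definition of a normalized admissible pair, each difference $\alpha_{i+1}-\alpha_i$ is odd, so the right-hand side is a sum of $s-1$ odd integers; hence $\alpha_s-\alpha_1\equiv s-1\pmod 2$.

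Next I would observe that $\alpha_1+\alpha_s$ and $\alpha_s-\alpha_1$ differ by $2\alpha_1$, hence have the same parity, so $\alpha_1+\alpha_s\equiv s-1\pmod 2$. Adding $s$ yields $\alpha_1+\alpha_s+s\equiv 2s-1\equiv 1\pmod 2$, i.e. $\alpha_1+\alpha_s+s$ is odd, which is the assertion. The degenerate case $s=1$ requires no separate argument: the claim then reads that $2\alpha_1+1$ is odd, consistent with $2s-1=1$.

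There is no genuine obstacle here; the only point requiring a little care is bookkeeping which quantity the oddness of the differences propagates to — first to $\alpha_s-\alpha_1$, then, via the even shift $2\alpha_1$, to $\alpha_1+\alpha_s$. It is worth noting that the conclusion is insensitive to $d$, to the ambient window $R$, and to the distinguished index $j=x_r$: only the parity pattern forced by the consecutive-difference condition enters. This is precisely the parity input needed to control the signs in the reindexing formula~\eqref{bigeq}.
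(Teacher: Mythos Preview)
Your proof is correct and follows essentially the same approach as the paper: telescope $\alpha_s-\alpha_1$ into a sum of $s-1$ odd differences to get $\alpha_s-\alpha_1\equiv s-1\pmod 2$, then convert to the stated parity of $\alpha_1+\alpha_s+s$. The paper stops at the congruence $\alpha_s-\alpha_1\equiv s-1$ and leaves the last reformulation implicit, but the argument is the same.
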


\begin{proof}
Since $(S,j)$ is a normalized admissible pair for $R$, one has $j=\alpha_s$ and $\alpha_{i+1}-\alpha_i$ is odd for all $i=1,...,s-1$. Hence
\begin{equation}
\alpha_s-\alpha_1=\sum_{i=1}^{s-1}(\alpha_{i+1}-\alpha_i)
\end{equation}
has the same parity as $s-1$.
\end{proof}

\begin{proposition}\label{admthm}
Let $(S,j)$ be a $G$-admissible pair. Let $\lambda\in \ZZ^n$ with coordinates $(x_1,...,x_n)$ in $\Bcal_S$. If $x_j<0$, then $H^0(S,\lambda)=0$.
\end{proposition}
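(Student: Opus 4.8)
The plan is to argue by induction on \(|S|\), reducing to the flag strata of \(S\setminus\{j\}\) via the partial Hasse invariant \(h_j\), and feeding in the explicit change-of-basis formula~\eqref{bigeq} together with the parity statement of Lemma~\ref{parity}.

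First I would dispose of the base case \(|S|=1\): the corresponding \(w\in W\) then has \(\ell(w)=1\), so Proposition~\ref{onlyone} gives \(C_{Y,S}=C_{\Ycal,S}\); since every element of \(C_{\Ycal,S}=C_S\) has nonnegative \(j\)-th coordinate in \(\Bcal_S\) by the description of \(C_S\) in \S\ref{thegroup}, the hypothesis \(x_j<0\) forces \(\lambda\notin C_{Y,S}\), hence \(H^0(S,\lambda)=0\). Next I would normalize once and for all, using the cyclic \(\ZZ/n_m\ZZ\)-symmetry of the \(m\)-th block (induced by the Frobenius of \(\FF_{p^{n_m}}/\FF_p\), which carries \(\Zcal\), \(\Bcal_S\), the Hasse invariants and the cones to the corresponding data for the rotated pair), so that \((S_m,j)\) is in normalized form: \(S_m=\{x_1<\dots<x_{s_m}\}\) with \(j=x_{s_m}\) and all \(x_{i+1}-x_i\) odd.

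For the inductive step, let \(0\neq f\in H^0(S,\lambda)\) with \(x_j<0\) and \(|S|\geq 2\). The section \(h_j\in H^0(\Ycal^*_S,\Lscr_{\Ycal}(\beta_{j,S}))\) has divisor \(\Ycal^*_{S\setminus\{j\}}\), and \(\zeta_Y^*h_j\) has divisor \(Y^*_{S\setminus\{j\}}\) by smoothness of \(\zeta_Y\); dividing out its maximal power gives \(f=(\zeta_Y^*h_j)^c f'\) with \(f'\) not identically zero on \(Y^*_{S\setminus\{j\}}\). Replacing \(\lambda\) by \(\lambda-c\beta_{j,S}\) (whose \(\Bcal_S\)-coordinates are \((x_1,\dots,x_n)\) except that the \(j\)-th becomes \(x_j-c<0\)), we may assume \(f|_{Y^*_{S\setminus\{j\}}}\neq 0\), i.e.\ \(H^0(S\setminus\{j\},\lambda)\neq 0\). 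Since \(\beta_{i,S}=\beta_{i,S\setminus\{j\}}\) for \(i\neq j\), formula~\eqref{bigeq} gives \(\beta_{j,S}=\sum_i a_i\beta_{i,S\setminus\{j\}}\) explicitly; each \(a_i\) is a nonzero rational whose sign is governed by the two \(\{\pm1\}\)-vectors of~\eqref{bigeq} and whose magnitude involves positive powers of \(p\) and the positive quantity \(\delta=p^{n_m}+(-1)^{s_m+n_m}\). Hence the \(\Bcal_{S\setminus\{j\}}\)-coordinates of \(\lambda\) are \(x_i+x_j a_i\) for \(i\neq j\) and \(x_j a_j\) for \(i=j\).

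The combinatorial heart is to produce a \(G\)-admissible pair for \(S\setminus\{j\}\) at whose distinguished index the coordinate of \(\lambda\) in \(\Bcal_{S\setminus\{j\}}\) is negative; the inductive hypothesis then gives \(H^0(S\setminus\{j\},\lambda)=0\), a contradiction. The obvious candidate is \((S\setminus\{j\},x_{s_m-1})\), which is again normalized admissible since the remaining gaps are unchanged, and I expect one must run this recursively --- deleting the extremal elements \(x_{s_m},x_{s_m-1},\dots\), and symmetrically the minimal element, both of which preserve admissibility --- and assemble the resulting chain of sign inequalities. Here Lemma~\ref{parity}, i.e.\ the oddness of \(\alpha_1+\alpha_s+s\), is what fixes the signs of the entries of~\eqref{bigeq} so that all the inequalities are consistent and force the \(j\)-th coordinate to be nonnegative, contradicting \(x_j<0\). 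To keep the divisor-stripping step from looping forever, Lemma~\ref{lemmainf} provides that \(H^0(S,\lambda-c\beta_{j,S})=0\) for \(c\gg 0\), so \(f\) cannot be divisible by arbitrarily high powers of \(\zeta_Y^*h_j\). I expect the main obstacle to be exactly this sign bookkeeping around~\eqref{bigeq}: checking that the odd-gap condition in the definition of \(G\)-admissibility is precisely what is needed for the negativity of the distinguished coordinate to propagate down the induction, whereas the geometric inputs --- the Hasse-invariant exact sequences, normality of \(Y^*_S\), and pseudo-completeness from \ref{conj}\eqref{item-assume-loc-const} --- are routine given the results recalled above.
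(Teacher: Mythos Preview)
Your outline has the right ingredients but the inductive step is set up in the wrong direction, and this is where it breaks. You strip powers of $h_j$ and then try to apply induction to the pair $(S\setminus\{j\},\alpha_{s_m-1})$ (writing $S_m=\{\alpha_1<\dots<\alpha_{s_m}\}$, $j=\alpha_{s_m}$). But the $\alpha_{s_m-1}$-coordinate of $\lambda'=\lambda-c\beta_{j,S}$ in $\Bcal_{S\setminus\{j\}}$ is $x_{\alpha_{s_m-1}}+(x_j-c)\,a_{\alpha_{s_m-1}}$, and nothing forces this to be negative: $x_{\alpha_{s_m-1}}$ may be an arbitrarily large positive integer, and $c$ is determined by $f$, not chosen by you. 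So the inductive hypothesis is not available and no contradiction appears. Your sentence that Lemma~\ref{parity} ``forces the $j$-th coordinate to be nonnegative, contradicting $x_j<0$'' is also backwards: $x_j<0$ is the standing hypothesis, not the thing to be contradicted.

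The paper removes elements \emph{other than} $j$, precisely so that the distinguished index stays $j$ and its negativity can be tracked through the basis change. For $i\in S\setminus S_m$ the $j$-coordinate is literally unchanged, so induction on $(S\setminus\{i\},j)$ reduces all such $x_i$ below zero. Then one removes $\alpha_1$: equation~\eqref{eqsm}, whose sign is fixed by Lemma~\ref{parity}, shows that whenever $x_{\alpha_1}\geq 0$ the new $j$-coordinate $y_j$ in $\Bcal_{S\setminus\{\alpha_1\}}$ is still negative; induction on $(S\setminus\{\alpha_1\},j)$ then gives $H^0(S\setminus\{\alpha_1\},\lambda)=0$, hence $H^0(S,\lambda)\cong H^0(S,\lambda-\beta_{\alpha_1,S})$, and iterating drives $x_{\alpha_1}$ negative. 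At that point there is a genuine dichotomy on the parity of $n_m+s_m$ which your sketch does not identify: if it is even, every $(S,\alpha_i)$ is admissible and one cycles through the $\alpha_i$ until all $x_{\alpha_i}<0$; if it is odd, one must \emph{alternate} between reducing $x_{\alpha_1}$ and $x_j$, using the size comparison $|x_{\alpha_1}|\lessgtr|x_j|$ together with the bound $2p^{n_m-1}/(p^{n_m}-1)<1$ to decide which of $(S\setminus\{\alpha_1\},j)$ or $(S\setminus\{j\},\alpha_1)$ currently has a negative distinguished coordinate, and finish via Lemma~\ref{lemmainf}. This parity split and the alternating reduction are the combinatorial heart of the proof and are missing from your proposal.
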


\begin{proof}
We prove the result by induction on $|S|$. Let $m\in \{1,...,r\}$ such that $j\in S_m$.\medskip

\noindent\textbf{1) Reduction to the case when $x_i<0$ for all $i\in S \setminus S_m$.}

Assume therefore $S\neq S_m$ and let $i\in S\setminus S_m$. The pair $(S-\{i\},j)$ is again $G$-admissible. Let $(y_1,...,y_n)$ be the coordinates of $\lambda$ in the basis $\Bcal_{S-\{i\}}$. Since $i\notin S_m$, one has $y_j=x_j<0$, so we deduce by induction that $H^0(S-\{i\},\lambda)=0$. This implies that
$^0(S,\lambda)\simeq H^0(S,\lambda-\beta_{S,i})$.

Applying the same argument successively, one eventually shows that $H^0(S,\lambda)\simeq H^0(S,\lambda')$ for some $\lambda'$ whose coordinates in $\Bcal_S$ are $(x'_1,...,x'_n)$ such that $x'_i<0$ for all $i\in S\setminus S_m$ and $x'_j<0$. Hence we may assume that this holds for $\lambda$ from the start. In particular, if $S_m=\{j\}$, then one already deduces  that $H^0(S,\lambda)=0$ using \ref{conj}~\eqref{item-assume-loc-const}. Therefore, we assume from now on that $s_m>1$ and $x_i<0$ for all $i\in S \setminus S_m$.

Using the Galois action, We may assume that $S_m=\{\alpha_1,...,\alpha_{s_m}\}$, $j=\alpha_{s_m}$ and $\alpha_1<...<\alpha_{s_m}$. The pair $(S-\{\alpha_1\},j)$ is again admissible. Let $(y_1,...,y_n)$ be the coordinates of $\lambda$ in the basis $\Bcal_{S-\{\alpha_1\}}$. The relevant coordinates are $y_{\alpha_2}, ... , y_{\alpha_{s_m}}$. For all $i>1$, one has:
\begin{equation}\label{eq}
y_{\alpha_i}=x_{\alpha_i}+(-1)^{\alpha_i+\alpha_1+i}x_{\alpha_1}\frac{2p^{n_m+\alpha_1-\alpha_i}}{p^{n_m}+(-1)^{s_m+n_m}}
\end{equation}
Take $i=s_m$ (so $\alpha_i=j$) in equation \eqref{eq}. Then Lemma \ref{parity} shows that the integer $\alpha_{s_m}+\alpha_1+s_m$ appearing in the formula above is always odd. Hence the formula for $i=s_m$ reads:
\begin{equation}\label{eqsm}
y_{j}=x_{j}-x_{\alpha_1}\frac{2p^{n_m+\alpha_1-j}}{p^{n_m}+(-1)^{s_m+n_m}}
\end{equation}
This fact will be used in the following. \medskip

\noindent\textbf{2) Reduction to the case when $x_{\alpha_1}<0$.}

If $x_{\alpha_1}\geq 0$ then (\ref{eqsm}) shows that $y_{j}<0$. Since $(S-\{\alpha_1\},j)$ is admissible, we have by induction $H^0(S-\{\alpha_1\},\lambda)=0$. Hence
$
H^0(S,\lambda)=H^0(S,\lambda-\beta_{\alpha_1,S}).$

We can apply this argument to reduce $x_{\alpha_1}$ by one as long as it is non-negative, and the process stops when $x_{\alpha_1}$ reaches the value $-1$.\medskip

\noindent\textbf{3) The case when $n_m+s_m$ even.}

The "jump" from $\alpha_{s_m}$ to $\alpha_1$ has parity $n_m+\alpha_1+\alpha_{s_m}$, which is the same as the parity of $n_m+s_m+1$ by Lemma \ref{parity}. In this case, it is therefore odd. Hence the pair $(S,\alpha_i)$ is also admissible for all $i=1,...,s_m$.

In particular, we may apply the first step to the pair $(S,\alpha_1)$. It then implies that we can reduce to the case when $x_{\alpha_2}$ is negative. By repeating this process for all $\alpha_i$, $i=1,...,s_m$, we obtain $H^0(S,\lambda)=H^0(S,\lambda')$ for some $\lambda'\in \ZZ^n$ with coordinates $(x'_1,...,x'_n)$ in the basis $\Bcal_S$ and $x'_{i}<0$ for all $i\in \{1,...,n\}$. Hence $H^0(S,\lambda)=0$.\medskip

\noindent\textbf{4) The case when $n_m+s_m$ odd.}

In this case, the pair $(S-\{\alpha_1\},\alpha_i)$ is $G$-admissible for all $i=2,...,s_m$ and the pair $(S-\{j\},\alpha_i)$ is $G$-admissible for all $i=1,...,s_m-1$.

Let $(z_1,...,z_n)$ be the change of basis of $\lambda$ to $\Bcal_{S-\{j\}}$. Then
\begin{equation}
z_{\alpha_1}=x_{\alpha_1}+x_{j}\frac{2p^{j-\alpha_1}}{p^{n_m}-1}
\end{equation}
Recall that $x_j$ and $x_{\alpha_1}$ are both negative. Also, recall formula (\ref{eqsm}) above:
\begin{equation}
y_{j}=x_{j}-x_{\alpha_1}\frac{2p^{n_m+\alpha_1-j}}{p^{n_m}-1}
\end{equation}
where $(y_1,...,y_n)$ denote the coordinates of $\lambda$ in the basis $\Bcal_{S-\{\alpha_1\}}$. Note that since we assumed $p>2$, we have
\begin{equation}
\frac{2p^{n_m-1}}{p^{n_m}-1}<1
\end{equation}
because $n_m>1$. Hence one has the following implications
\begin{align}
|x_{\alpha_1}|\leq |x_{j}| &\Longrightarrow y_{j}<0 \\
|x_j|\leq |x_{\alpha_1}| &\Longrightarrow z_{\alpha_1}<0.
\end{align}
We reduce alternately the value of $x_{\alpha_1}$ and $x_{j}$. We may assume for example that $|x_{\alpha_1}|\leq |x_{j}|$. In this case, we have $y_{j}<0$. Applying induction to the admissible subset $(S-\{\alpha_1\},j)$ gives $H^0(S-\{s_1\},\lambda)=0$. This implies that $H^0(S,\lambda)$ does not change (up to isomorphism) when we replace $x_{\alpha_1}$ by $x_{\alpha_1}-1$. We may repeat this argument until $x_{\alpha_1}$ reaches $x_{j}-1$. Then we have $|x_{j}|\leq |x_{\alpha_1}|$, so $z_{\alpha_1}<0$. Analogously we can replace $x_{j}$ by $x_{j}-1$. In this way, we can reduce alternately the values of $x_{\alpha_1}$ and $x_{j}$ to arbitrarily negative integers without changing $H^0(S,\lambda)$ up to isomorphism. Applying Lemma \ref{lemmainf}, we have $H^0(S,\lambda)=0$.
\end{proof}

\begin{proof}[Proof of Theorem \ref{mainthm}]
Fix a $G$-admissible subset $S\subset \{1,...,n\}$. We prove that $ C_{\Ycal,S}  =  C_{Y,S}$. The inclusion $ C_{\Ycal,S} \subset  C_{Y,S} $ is clear. Conversely, let $\lambda \notin C_{\Ycal,S}$. Then there exists $i\in S$ such that $x_i<0$, where $(x_1,...,x_n)$ denote the coordinates of $\lambda$ in the basis $\Bcal_S$. By definition, the pair $(S,i)$ is $G$-admissible. It follows from Proposition \ref{admthm} that $H^0(S,N\lambda)=0$ for all $N\geq 1$. Hence $\lambda \notin  C_{Y,S}$. 
\end{proof}

\section{Further examples and counter-examples: Types $A_2$, $C_2$ and $C_3$}
\label{sec-further}
\subsection{The results}
\label{sec-results-further}
\begin{theorem} 
\label{th-further}
Suppose $\Zcal$ is a proper zip datum of connected-Hodge type whose underlying group $G$ is either of type $C_2$ or $\fp$-split of type $A_2$. Then \textnormal{Conjecture~\ref{conj}} holds for $\Zcal$. More precisely, $C_{Y,w}=C_{\Ycal,w}$ for all $w \in W$, except possibly in each case when $w'$ is the unique lower neighbor of $w_0$ satisfying $w' \notin {}^IW$.
\end{theorem}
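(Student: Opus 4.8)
The plan is to establish $C_{Y,w}=C_{\Ycal,w}$ by induction on the Bruhat--Chevalley order on $W$, combining Propositions~\ref{onlyone},~\ref{prop-codim-1} and~\ref{colength1} exactly in the pattern laid out in \S\ref{sec-basic-strategy}. As a first move I would apply Lemma~\ref{change} to replace $G$ by the simply connected group of its type --- that is, $Sp_4$ when $G$ has type $C_2$ and $SL_3$ when $G$ is $\fp$-split of type $A_2$. This is legitimate because the equality of cones depends only on the type of $G$, and it yields $\Pic(G)=0$, which is needed to invoke Proposition~\ref{colength1} and which (Remark~\ref{rmk-sep-basis}) guarantees that $w_0$ admits a separating system of partial Hasse invariants. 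Since $\Zcal$ is proper of connected-Hodge type, its parabolic $P$ has Levi $L$ of type $A_1$; hence $I$ is a single simple root, $W_L$ has order two, and $\iw$ has $|W|/2$ elements. With $|W|=8$ for type $C_2$ and $|W|=6$ for type $A_2$, the element $w_0$ has exactly two lower neighbors, of which precisely one lies outside $\iw$: this is the stratum $w'$ that the method below does not reach.

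Next I would climb the order. For $w$ with $\ell(w)\le 1$, Proposition~\ref{onlyone} gives the equality at once (the case $w=1$ being elementary). For each remaining $w$, processed in order of increasing length, I would invoke Proposition~\ref{prop-codim-1}: this requires a separating system of partial Hasse invariants for $\Ycal_w$ together with the polyhedral containment $\bigcap_i C_{\Ycal,w_i}\subset C_{\Ycal,w}$ over the lower neighbors $\{w_i\}$ of $w$, the remaining hypothesis --- equality of cones for the $w_i$ --- being furnished by the induction. The partial Hasse invariants are the sections of $\Lscr_\Ycal(\lambda)$ pulled back along $\Ycal\xrightarrow{\Psi}\GZip^{\Zcal_B}\xrightarrow{\beta}\Sbt$ from the Schubert stack; for $Sp_4$ and $SL_3$ I would write down their weights explicitly from the rank-two root datum, and with them the Schubert cones $C_{\Sbt,w}$ and the full cones $C_{\Ycal,w}$ --- recovering the description of $C_\Xcal$ recorded in Figures~\ref{fig-A2} and~\ref{fig-C2}. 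Running through the eight (resp.\ six) elements of $W$, one checks that the hypotheses of Proposition~\ref{prop-codim-1} hold for every $w$ except the single stratum $w'$, where either the number of lower neighbors exceeds the semisimple rank or the cone containment fails. This yields $C_{Y,w}=C_{\Ycal,w}$ for all $w\neq w'$ of length $<\ell(w_0)$, and in particular for the other lower neighbor $w_1\in\iw$ of $w_0$.

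Finally I would treat $w_0$. Proposition~\ref{prop-codim-1} is unavailable for $w_0$, since its third hypothesis would require the (unproved) equality for $w'$; instead I would apply Proposition~\ref{colength1} to the lower neighbor $w_1$. Its hypothesis (a) holds by the reduction to the simply connected group, hypothesis (c) holds by the previous paragraph, and hypothesis (b) --- the containment $X_{+,I}(T)\cap C_{\Ycal,w_1}\subset C_\Ycal$ --- is read off the explicit descriptions of $C_\Ycal=C_\Xcal$ and of $C_{\Ycal,w_1}$ obtained above. This gives $C_Y=C_\Ycal$, equivalently $C_X=C_\Xcal$ by~\eqref{eq-llambda-vlambda}; combined with the previous paragraph, $C_{Y,w}=C_{\Ycal,w}$ holds for all $w\in W$ with the sole possible exception of $w'$, and Conjecture~\ref{conj} holds for $\Zcal$.

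The main obstacle is the explicit root-theoretic bookkeeping of the second step: for each $w$ one must exhibit a separating system of partial Hasse invariants and verify the polyhedral containment $\bigcap_i C_{\Ycal,w_i}\subset C_{\Ycal,w}$, a finite but delicate computation. For split $A_2$ it is short; for $C_2$ the larger Weyl group and the presence of two root lengths make the case analysis heavier, and it is precisely there that $w'$ falls outside the scope of Proposition~\ref{prop-codim-1}. The secondary computational input is pinning down $C_\Xcal$ exactly --- which is what checking hypothesis (b) of Proposition~\ref{colength1} comes down to --- and this again proceeds through the partial Hasse invariants and is what the figures record.
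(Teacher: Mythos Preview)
Your proposal is correct and follows essentially the same route as the paper: reduce via Lemma~\ref{change} to a group with trivial Picard group, handle the length-one elements by Proposition~\ref{onlyone}, climb inductively through the intermediate strata using Proposition~\ref{prop-codim-1} with the separating systems and cone inclusions read off from Figures~\ref{fig-A2} and~\ref{fig-C2} (skipping the exceptional $w'$), and treat $w_0$ by Proposition~\ref{colength1} applied to the lower neighbor $w_1\in{}^IW$. The only cosmetic differences are that the paper uses $GL(3)$ rather than $SL_3$ in the $A_2$ case, and that for $w'=(14)$ in type $C_2$ the obstruction is not that the number of lower neighbors exceeds the rank (it is $2=2$) but rather that the available partial Hasse invariants have divisors $2\Ycal^*_{w_i}$ and the cone containment of Proposition~\ref{prop-codim-1}(b) fails.
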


For all $w \in W$, the $C_{\Ycal,w}$ are given explicitly in Figures~\ref{fig-A2},\ref{fig-C2}. By the theorem, this gives an explicit description of $C_Y=C_X$. In terms of \S\ref{sec-cn}-\S\ref{sec-a2}, the exceptional strata in Theorem~\ref{th-further} are given respectively by $w=(14)$ and $w=(123)$.  The results recalled in \S\ref{sec-shimura} imply:
\begin{corollary} Let $\gx$ be a Shimura datum with $\GG=GSp(4)$ or $\GG$ a unitary group associated to an imaginary quadratic field in which $p$ splits and $\GG_\RR=GU(2,1)$. 
Let $X$ be the special fiber of the associated Shimura variety at a level hyperspecial at $p$. 
If $X$ satisfies~\textnormal{\ref{conj}\eqref{item-assume-loc-const}}, then~\textnormal{Conjecture~\ref{conj}} holds for $X$. 
\end{corollary}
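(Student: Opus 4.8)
The plan is to deduce the statement directly from Theorem~\ref{th-further} once its hypotheses have been checked; the substantive content lies entirely in that theorem, so the proof is a matter of verification. First I would recall the input of \S\ref{sec-shimura}, valid for any Hodge-type Shimura variety with hyperspecial level at $p$: by \cite{ZhangEOHodge} there is a smooth morphism $\zeta\colon X\to\GZip^\Zcal$, and by the non-emptiness theorems for Newton (hence Ekedahl--Oort) strata \cite{Lee-newton-strata-nonempty,Kisin-Honda-Tate-theory-Shimura-varieties} it is surjective on each connected component; thus $(X,\zeta)$ satisfies~\ref{conj}\eqref{item-assume-smooth}, while~\ref{conj}\eqref{item-assume-loc-const} is assumed in the statement. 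It then suffices to verify that the zip datum $\Zcal$ attached to $X$ is \emph{proper}, of \emph{connected-Hodge type}, and has underlying group $G$ of type $C_2$ (resp.\ $\fp$-split of type $A_2$), for then Theorem~\ref{th-further} applies and gives $C_X=C_{\Xcal}$.

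For $\GG=GSp(4)$: the group is split over $\QQ$, so $G:=G_{\ZZ_p}\times\FF_p$ is split of type $C_2$; the Hodge cocharacter $\mu$ is minuscule and non-central, so the parabolic $P$ in $\Zcal$ is a proper parabolic and $\Zcal$ is proper; and the adjoint datum is $(PSp(4),\mu_2)$ itself, so \ref{conj}\eqref{hodgetype} holds with $g=2$ and the identity embedding (\Def~\ref{def-conn-hodge-type}). For $\GG_\RR=GU(2,1)$ with $p$ split in the imaginary quadratic field $E$: then $E\otimes_\QQ\QQ_p\cong\QQ_p\times\QQ_p$, so $\GG_{\QQ_p}$ is split with derived group $SL_3$ and $G$ is $\fp$-split of type $A_2$ (if one wishes to isolate the type-$A_2$ part, Lemma~\ref{change} allows replacing $G$ by $\Gtilde=SL_{3,\FF_p}$, since the cones depend only on the type of $G$); again $\mu$ is minuscule and non-central, so $\Zcal$ is proper; and \ref{conj}\eqref{hodgetype} holds because the Shimura datum is of PEL type $A$, whose standard symplectic realization yields the embedding of adjoint cocharacter data into some $(PSp(2g),\mu_g)$ required by \Def~\ref{def-conn-hodge-type} (this is the input underlying Theorem~\ref{th-intro}\eqref{item-type-a2}).

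Granting these verifications, $\Zcal$ is a proper zip datum of connected-Hodge type whose group $G$ is of type $C_2$ or $\fp$-split of type $A_2$, and $(X,\zeta)$ satisfies \ref{conj}\eqref{item-assume-smooth}--\eqref{item-assume-loc-const}; hence Theorem~\ref{th-further} applies and yields $C_{Y,w}=C_{\Ycal,w}$ for all $w\in W$, in particular (taking $w=w_0$ and using $C_X=C_Y=C_{Y,w_0}$ and $C_{\Xcal}=C_{\Ycal}=C_{\Ycal,w_0}$) the equality $C_X=C_{\Xcal}$, which is Conjecture~\ref{conj} for $X$. The step requiring the most care is the verification of \ref{conj}\eqref{hodgetype} in the $GU(2,1)$ case, namely exhibiting the embedding of the adjoint cocharacter datum into $(PSp(2g),\mu_g)$ and matching it with the zip-datum formalism of \S\ref{sec-review}; everything else is either recalled in \S\ref{sec-shimura} or an immediate appeal to Theorem~\ref{th-further} and Lemma~\ref{change}.
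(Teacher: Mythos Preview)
Your approach is correct and essentially identical to the paper's: the paper simply writes ``The results recalled in \S\ref{sec-shimura} imply'' before the Corollary, and you have spelled out exactly those verifications (smoothness and surjectivity of $\zeta$ from \cite{ZhangEOHodge} and \cite{Lee-newton-strata-nonempty,Kisin-Honda-Tate-theory-Shimura-varieties}, the group-type identification, and connected-Hodge-type) before invoking Theorem~\ref{th-further}. One small slip: Theorem~\ref{th-further} does \emph{not} claim $C_{Y,w}=C_{\Ycal,w}$ for all $w\in W$---there is one possible exception---but since you only need $w=w_0$ (which is not that exceptional element), this does not affect the argument.
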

By contrast, the following gives a counterexample to Conjecture~\ref{conj} when the zip datum is not of connected-Hodge-type.
\begin{proposition}
\label{prop-counterexample}
Let $X$ be the special fiber of the Siegel Shimura variety of type $GSp(6)$ at a level which is hyperspecial at $p$. The inclusion $C_{\Xcal_B} \subset C_Y$ is strict for the pair $(Y, \Psi \circ \zeta_Y)$ \textnormal{(\S\ref{sec-flag-space})}.
\end{proposition}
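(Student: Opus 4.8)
The plan is as follows. Write $G$ for $GSp(6)$, let $\Zcal$ be the Siegel zip datum (attached to the minuscule cocharacter), and set $\Xcal=\GZip^\Zcal$, $\Ycal=\GF^\Zcal$, $Y=X\times_\Xcal\Ycal$, and $\Psi\colon\Ycal\to\Xcal_B:=\GZip^{\Zcal_B}$ the smooth surjective morphism of \eqref{stratif-mor}. Since $\zeta$ is smooth and surjective (Zhang \cite{ZhangEOHodge}; Nie \cite{Nie-Newton-EO}, Kisin--Madapusi--Shin \cite{Kisin-Honda-Tate-theory-Shimura-varieties}), so is $\Psi\circ\zeta_Y$, whence $C_{\Xcal_B}\subseteq C_Y$ by pullback of sections, while $C_Y=C_X$ by \eqref{eq-llambda-vlambda}. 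So it suffices to produce a character $\lambda_0\in C_X\setminus C_{\Xcal_B}$; by Lemma~\ref{change} the relevant cones may be computed after replacing $G$ by $Sp(6)$, so $X^*(T)\cong\ZZ^3$ with dominant cone $X^*_{+,I}(T)=\{a_1\geq a_2\geq a_3\}$ and Hodge character $\eta=(1,1,1)$.

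First I would determine $C_{\Xcal_B}$ explicitly. As $\Zcal_B$ is of Borel type and $\Pic(Sp(6))=0$, the stack $\Xcal_B$ is normal with $\Pic(\Xcal_B)\cong X^*(T)$, and its only codimension-one strata closures are $\overline{\Xcal}_{B,w_0s_i}$ for the three simple reflections $s_i$ of $C_3$; each carries its tautological partial Hasse invariant $h_i\in H^0(\Xcal_B,\Lscr_{\Xcal_B}(\lambda_i))$ with $\Div(h_i)=\overline{\Xcal}_{B,w_0s_i}$. Any nonzero global section of $\Lscr_{\Xcal_B}(\lambda)$ is nowhere vanishing on the dense open stratum $\Xcal_{B,w_0}$ and so has effective divisor supported on the $\overline{\Xcal}_{B,w_0s_i}$; hence $C_{\Xcal_B}$ is the saturated cone $\mathrm{Cone}_\QQ(\lambda_1,\lambda_2,\lambda_3)$. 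Writing the $\lambda_i$ in coordinates is then a finite calculation in the Weyl group of $C_3$, depending only on $p$ and on the $W$-frame element $z$ of $\Zcal_B$. The expected outcome is that $\mathrm{Cone}_\QQ(\lambda_1,\lambda_2,\lambda_3)$ is a \emph{proper} subcone of $X^*_{+,I}(T)$ and, crucially, that $\eta$ does \emph{not} lie in its interior — it should sit on a facet, being up to a positive scalar a combination of partial Hasse invariant weights arising from the non-ordinary locus.

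It remains to exhibit $\lambda_0$. If the computation gives $\eta\notin C_{\Xcal_B}$, then $\lambda_0=\eta$ works at once, since $\eta\in C_X$ (a power of the Hodge bundle carries the Hasse invariant). Otherwise $\eta$ lies on a facet $F$ of $C_{\Xcal_B}$ and I would take $\lambda_0=\rho+N\eta$ with $\rho$ a fixed dominant character pointing out of $C_{\Xcal_B}$ across $F$ and $N$ large: for $N\gg0$ this $\lambda_0$ is dominant, lies outside $C_{\Xcal_B}$ (it approaches the ray of $\eta$ from the far side of $F$), and lies in $C_X$ because on a smooth projective toroidal compactification $X^{\tor}$ the Hodge bundle $\omega=\Vscr_X(\eta)$ is semiample and big (its sections define the contraction $X^{\tor}\to X^{\min}$), so by Koecher's principle and the projection formula $H^0(X^{\tor},\Vscr_X(\rho)\otimes\omega^{\otimes N})\neq0$ for $N$ large, whence $\rho+N\eta\in C_X=C_{X^{\tor}}$. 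An alternative that also covers every good $p$ uniformly is to take for $\lambda_0$ the weight of a well-chosen classical vector-valued genus-$3$ Siegel modular form with primitive integral $q$-expansion and reduce it modulo $p$.

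The main obstacle is the interplay of these two steps: one must carry out the explicit determination of the rays $\lambda_1,\lambda_2,\lambda_3$ and verify that $\eta$ is not interior to $C_{\Xcal_B}$ — without this the twist $\rho+N\eta$ cannot escape $C_{\Xcal_B}$ at all — and one must supply the effectivity of $\lambda_0$ on $X$ by genuinely geometric input (positivity of $\omega$ together with Koecher, or an explicit modular form), since any section pulled back from $\Xcal_B$ automatically has weight in $C_{\Xcal_B}$ and hence cannot witness the strictness of the inclusion.
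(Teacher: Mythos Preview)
Your plan is essentially the paper's: both compute $C_{\Xcal_B}$ as the simplicial cone spanned by the three partial Hasse invariant weights, observe that the Hodge character $\eta_\omega$ sits on its boundary (indeed on an extreme ray, proportional to $v_3=-(p-1,p-1,p-1)$), and exhibit the witness as $\lambda_0+n\eta_\omega$ for a fixed $I$-dominant regular $\lambda_0$ lying just outside the cone. The paper carries this out explicitly: it writes the three facet inequalities~\eqref{eq-sp6-ineq}, takes $\iota^*\lambda_0=-(p+1,p^2,p^2+1)$, and checks directly that all $\lambda_n=\lambda_0+n\eta_\omega$ violate the second inequality. (Note the sign: in the paper's conventions $\iota^*\eta_\omega=(-1,-1,-1)$, not $(1,1,1)$.)

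The one point where the paper is genuinely sharper than your sketch is the positivity step. You argue on $X^{\tor}$ via ``$\omega$ semiample and big,'' but semiample and big alone does not give $H^0(X^{\tor},\Vscr_X(\rho)\otimes\omega^{\otimes N})\neq 0$ for large $N$; you would need to invoke that $\omega$ is pulled back from an \emph{ample} bundle on $X^{\min}$ and then apply the projection formula plus Serre. The paper avoids compactifications entirely: Moret-Bailly gives $\omega$ ample on the open $X=\Acal_{3,K}$, the $I$-dominant regular $\lambda_0$ makes $\Lscr_Y(\lambda_0)$ relatively ample for $Y\to X$, and the standard lemma on $f^*\Mcal^a\otimes\Ncal$ then shows $\Lscr_Y(\lambda_n)$ is ample on $Y$ for $n\gg 0$, hence has sections. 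This is both cleaner and gives the stronger conclusion that $\Lscr_Y(\lambda_n)$ is actually ample.
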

\subsection{Groups of type $C_n$}
\label{sec-cn}
For $n\geq 1$, let $G$ be an $\fp$-group of type $C_n$ and $\Zcal_\mu=(G,P,L,Q,M,\varphi)$ a zip datum of connected-Hodge-type. Identify the root system of $(T,G)$ with $(\QQ^n, \Phi)$, where \begin{equation} 
\label{eq-roots-sp}
\Phi=\{\pm\ei\pm\ej | 1\leq i \neq j \leq n\} \cup \{\pm 2\ei| 1 \leq i \leq n\}.\end{equation}
Then $W \cong \{\sigma \in S_{2n}|\sigma(a)+\sigma(2n+1-a)=2n+1 \mbox{ for all } 1\leq a \leq 2n\}$.
Fix $\Delta=\{\ee_i-\ee_{i+1}| 1 \leq i \leq n-1\} \cup \{2\ee_n\}$. 

Since $\Zcal_\mu$ is of connected-Hodge-type, $\mu^\ad \in X_*(G^{\ad})$ is minuscule.  The unique $\Delta$-dominant, minuscule cocharacter of $G$ is the fundamental coweight corresponding to $2\ee_n$. Since $L=\cent_G(\mu)$, the type of $L$ is $I=\Delta \setminus \{2\ee_n\}$.  

For $G=Sp(2n)$, we identify $X^*(T) \cong \ZZ^n$ compatibly with~\eqref{eq-roots-sp}.
\subsection{Groups of type $A_2$.}
\label{sec-a2}
Let $G$ be an $\fp$-split group of type $A_2$ and and $\Zcal_\mu=(G,P,L,Q,M,\varphi)$ a zip datum of connected-Hodge-type. Identify the root system of $(T,G)$ with $(\QQ^n, \Phi)$, where $\Phi=\{\pm(\ei-\ej) | 1\leq i < j \leq 3\}$. Then $W \cong S_3$. The two choices $I=\{\ee_1-\ee_2\}$ and $I=\{\ee_2-\ee_3\}$ correspond to isomorphic zip data. Choose $I=\{\ee_1-\ee_2\}$.
For $G=GL(3)$, identify $X^*(T) \cong \ZZ^3$ compatibly with $\Phi$. It suffices to consider characters of the form $(a_1,a_2,0) \in \ZZ^3$; denote such by $(a_1,a_2)$.
\subsection{Cone Diagrams} Recall the sub-cone $C_{\Sbt, w} \subset C_{\Ycal,w}$ for all $w \in W$ (\Def~\ref{def-partial-hasse-schubert}).
In Figures~\ref{fig-A2},\ref{fig-C2},  the equations for $C_{\Sbt, w}$ appear beside each $w \in W$. In Figure~\ref{fig-A2}, $(a_1,a_2)$ stands for $(a_1,a_2,0)$. A line connecting $w$ to a lower element $w'$ means that $w'$ is a lower neighbor of $w$. Furthermore, the line joining $w$ and $w'$ is labeled by  $\lambda \in \ZZ^2$ to indicate that there exists $h\in H^0(\Ycal^*_w, \Lscr_{\Ycal}(\lambda))$ whose vanishing locus is exactly $\Ycal_{w'}$. 

Among the two cases $A_2$-split and $C_2$, every stratum except $\Ycal_{(14)}$ admits a separating system of partial Hasse invariants (\Def~\ref{def-separating}); $\Ycal^*_{(14)}$ has sections $h_1$ and $h_2$ such that $\Div(h_1)=2\Ycal^*_{(1324)}$ and $\Div(h_2)=2\Ycal^*_{(1243)}$.

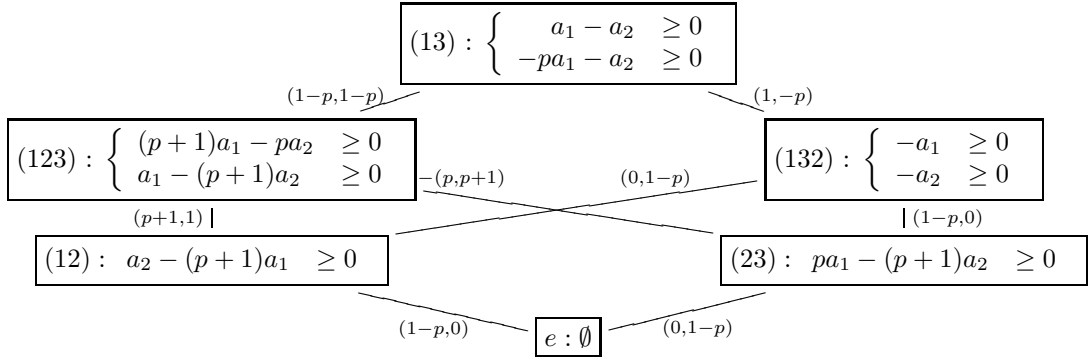
\begin{figure}[h] 
\caption{Strata cones and partial Hasse invariants for type $A_2$-split}
\label{fig-A2}
\centerline{
$\xymatrixcolsep{-1pc}\xymatrixrowsep{.6pc}\xymatrix{  &
\framebox{(13) : $\left\{ \begin{array}{rc} a_1-a_2 & \geq 0 \\ -pa_1 -a_2& \geq 0
\end{array}\right.$ }  
\ar@{-}[rd]^(.6){(1,-p)} 
\ar@{-}[ld]_(.6){(1-p,1-p)}  
&
\\ 
\framebox{(123) : $\left\{ \begin{array}{lc} (p+1)a_1 -pa_2& \geq 0 \\ a_1-(p+1)a_2   & \geq 0
\end{array}\right.$ } 
\ar@{-}[rrd]^(.35){-(p,p+1)}
\ar@{-}[d]_-{(p+1,1)}
&
&
\framebox{(132) : $\left\{ \begin{array}{rc} -a_1& \geq 0 \\ 
-a_2 & \geq 0
\end{array}\right.$ }
\ar@{-}[lld]_(.35){(0,1-p)}
\ar@{-}[d]^-{(1-p,0)}
\\  
\framebox{$(12):  \begin{array}{rc} a_2 -(p+1)a_1 & \geq 0 
\end{array}$
}
\ar@{-}[rd]_-{(1-p,0)}
&
&
\framebox{$(23) :  \begin{array}{rc} pa_1-(p+1)a_2 & \geq 0
\end{array}$
}
\ar@{-}[ld]^-{(0,1-p)}
\\
& 
\framebox{$e :\emptyset$}
&
} $}
\end{figure}
\begin{figure}[h]     
\caption{Strata cones and partial Hasse invariants for type $C_2$}
\label{fig-C2}
 \centerline{
$\xymatrixcolsep{-1pc}\xymatrixrowsep{.65pc}\xymatrix{  &
\framebox{(14)(23) : $\left\{ \begin{array}{rc} a_1-a_2 & \geq 0 \\ -pa_1 -a_2& \geq 0
\end{array}\right.$ }  
\ar@{-}[rd]^(.55){(1,-p)} 
\ar@{-}[ld]_(.55){-(p-1,p-1)}  
&
\\ 
\framebox{(14) : $\left\{ \begin{array}{rc} (p+1)a_1 -(p-1)a_2& \geq 0 \\ -(p-1)a_1-(p+1)a_2 & \geq 0
\end{array}\right.$ } 
\ar@{-}[rrd]
\ar@{-}[d]
& 
&
\framebox{(13)(24) : $\left\{ \begin{array}{rc} -a_2& \geq 0 \\ -a_1 & \geq 0
\end{array}\right.$ } 
\ar@{-}[lld]_(.35){-(0,p-1)}
\ar@{-}[d]^{-(p-1,0)}
\\
\framebox{(1342) : $\left\{ \begin{array}{lc} -(p+1)a_1 -(p-1)a_2& \geq 0 \\ -a_1   & \geq 0
\end{array}\right.$ } 
\ar@{-}[rrd]^(.4){-(0,p+1)}
\ar@{-}[d]_-{(-(p-1), p+1)}
&
&
\framebox{(1243) : $\left\{ \begin{array}{rc} (p-1)a_1 -(p+1)a_2& \geq 0 \\ 
-a_2 & \geq 0
\end{array}\right.$ }
\ar@{-}[lld]_(.4){-(p+1,p-1)}
\ar@{-}[d]^-{-(p+1,0)}
\\  
\framebox{$(12)(34) :  \begin{array}{rc} a_1 -a_2 & \geq 0 
\end{array}$
}
\ar@{-}[rd]_-{-(0,p+1)}
&
&
\framebox{$(23) :  \begin{array}{rc} -pa_1+a_2 & \geq 0
\end{array}$
}
\ar@{-}[ld]^-{(-p,1)}
\\
& 
\framebox{$e :\emptyset$}
&
} $}
\end{figure}
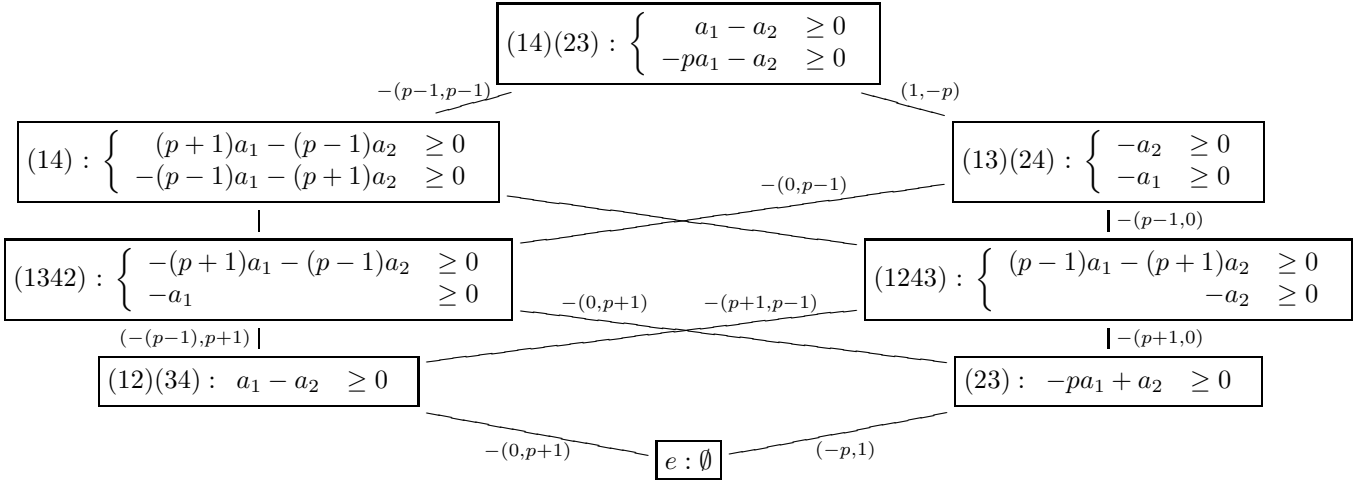

\subsection{Proof of Theorem~\ref{th-further}}
We prove the case $C_2$; the case $A_2$ is completely analogous except it is one step shorter, so it is left to the reader. The inclusions between cones used in the proof below are checked by consulting Figure~\ref{fig-C2}; in case $A_2$ use Figure~\ref{fig-A2}. Note that both $G=Sp(2n)$ and $G=GL(n)$ satisfy $\Pic(G)=0$.
\begin{proof}[Proof of \textnormal{Theorem~\ref{th-further}}] By Lemma~\ref{change}, it suffices to treat $G=Sp(4)$. We proceed in four steps: First, since $(12)(34)$ and $(23)$ have length one, they satisfy the equality of cones by \Prop~\ref{onlyone}. 
Second, since $ C_{(12)(34)} \cap  C_{(23)} \subset C_{(1342)} \cap C_{(1243)}$, both $(1342)$ and $(1243)$ satisfy the equality of cones by \Prop~\ref{prop-codim-1}. Third, as $ C_{(1342)} \cap  C_{(1243)} \subset C_{(13)(24)} $, the equality of cones also holds for $C_{(13)(24)}$, again by \Prop~\ref{prop-codim-1}.

Finally, one has $X_{+,I}^*(T) \cap  C_{(13)(24)} \subset  C_\Xcal $, so the result for $(14)(23)$ follows from the third step and \Prop~\ref{colength1}.
\end{proof}

\subsection{The counter-example}
\label{sec-gsp6}
Let $G=GSp(6)$. Then $\tilde{G}=Sp(6)$ and $\iota:Sp(6) \to GSp(6)$ is the inclusion. Let $X=\Acal_{3, K}$ be the moduli of prime-to-$p$ polarized abelian threefolds in characteristic $p$ with level $K$, assumed hyperspecial at $p$. It is endowed with a smooth morphism $\zeta:X\to \GZip^{\Zcal}$ (\S\ref{sec-shimura}). Let $\omega$ be the Hodge line bundle of $\Acal_{3,K}$. With our identifications, the Hodge character $\eta_{\omega}$ (such that $\Vscr(\eta_{\omega})=\omega$) satisfies $\iota^*(\eta_{\omega})=(-1,-1,-1)$.

Proposition~\ref{prop-counterexample} is an immediate consequence of the following:
\begin{lemma} 
\label{lemma-sp6-example} Let $(\lambda_n)_{n\geq 0} \subset X^*(T)$ be a sequence of characters satisfying \[\iota^*(\lambda_n)=-(p+1+n,p^2+n,p^2+1+n) .\] Then one has: \begin{enumerate}[(a)]
\item \label{item-ex-no-zip-section} $H^0(\Ycal, \Vscr(m\lambda_n))=0$ for all $n,m \geq 1$. In other words, $\lambda_n \not \in  C_\Ycal $ for all $n\geq 1$. 
\item \label{item-ex-ample} For sufficiently large $n$, the line bundle $\Lscr_Y(\lambda_n)$ is ample on $Y$.
\item \label{item-ex-yes-section} In particular, for sufficiently large $n \geq 1$, $\lambda_n\in  C_{Y} $.
\end{enumerate} 
\end{lemma}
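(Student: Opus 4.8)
\emph{Overview of the strategy.} The three assertions are proved in turn: (c) is immediate from (b) once one knows $Y$ is a nonempty scheme, and (b) is a routine relative-ampleness argument, so the real content is (a). For (b) I will fix the lifts $\lambda_n:=\lambda_0+n\eta_\omega$, where $\lambda_0\in X^*(T)$ is a chosen lift of $-(p+1,p^2,p^2+1)$; this is harmless since (a) only depends on $\iota^*\lambda_n$, which the hypothesis pins down (and recall $\iota^*\eta_\omega=-(1,1,1)$).

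\emph{Proof of (a).} The plan is to pass to the simply connected group and identify the relevant cone combinatorially. By \Lem~\ref{change} applied to $\iota:Sp(6)\to GSp(6)$, one has $\lambda_n\in C_\Ycal$ iff $\iota^*(\lambda_n)\in C_{\tilde\Ycal}$, where $\tilde\Ycal=\GtildeZip^{\tilde\Zcal}$ with $\tilde G=Sp(6)$, which is $\fp$-split of type $C_3$; and $C_{\tilde\Ycal}=C_{\tilde\Xcal}$ by~\eqref{eq-llambda-vlambda} at the level of stacks. Likewise $C_{\Xcal_B}\subseteq C_\Ycal$ and $C_{\Xcal_B}\subseteq C_Y$ by pulling sections back along the smooth surjective maps $\Psi$ and $\Psi\circ\zeta_Y$. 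So it suffices to show $\iota^*(\lambda_n)\notin C_{\tilde\Xcal}$ for all $n\geq0$. Now $\Pic(Sp(6))=0$, so by \Rmk~\ref{rmk-sep-basis}(c) the top stratum of $\tilde\Ycal$ admits a separating system of partial Hasse invariants $\{s_\alpha\}_{\alpha\in\Delta}$ with $\Div(s_\alpha)=\overline{\tilde\Ycal}_{w_0s_\alpha}$ and weights $\lambda_\alpha\in X^*(\tilde T)$. Since $\tilde\Ycal$ is a smooth stack whose prime divisors are exactly the $\overline{\tilde\Ycal}_{w_0s_\alpha}$, and $\Ocal(\overline{\tilde\Ycal}_{w_0s_\alpha})=\Lscr_{\tilde\Ycal}(\lambda_\alpha)$, the divisor of any nonzero section of $\Lscr_{\tilde\Ycal}(N\mu)$ is an effective $\NN$-combination of the $\overline{\tilde\Ycal}_{w_0s_\alpha}$, forcing $N\mu\in\sum_\alpha\NN\lambda_\alpha$; hence $C_{\tilde\Xcal}=C_{\tilde\Ycal}=\sum_{\alpha\in\Delta}\QQ_{\geq0}\lambda_\alpha$, i.e.\ the Schubert cone $C_{\Sbt,w_0}$. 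It then remains to (i) compute the three weights $\lambda_{\ee_1-\ee_2},\lambda_{\ee_2-\ee_3},\lambda_{2\ee_3}$ explicitly in the $X^*(\tilde T)\cong\ZZ^3$ of \S\ref{sec-cn}, using the partial-Hasse-invariant recipe of \cite[\S4]{Goldring-Koskivirta-zip-flags} for the Borel-type datum on split $Sp(6)$ (equivalently, Goren's partial Hasse invariants on the Siegel threefold), and (ii) check by linear algebra that $\iota^*(\lambda_n)=-(p+1+n,\,p^2+n,\,p^2+1+n)$ is never a $\QQ_{\geq0}$-combination of them. For (ii) the key point is that the shift direction $\iota^*(\eta_\omega)=-(1,1,1)$ lies on the bounding hyperplane of the facet of $C_{\Sbt,w_0}$ that $\iota^*(\lambda_0)$ narrowly violates: writing $\ell\geq0$ for the corresponding defining inequality, one verifies $\ell(1,1,1)=0$ and $\ell(\iota^*\lambda_0)<0$, so $\ell(\iota^*\lambda_n)=\ell(\iota^*\lambda_0)<0$ for all $n$. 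This is what dictates the precise shape of the $\lambda_n$, in particular the $p^2$'s.

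\emph{Proof of (b) and (c).} Since $\eta_\omega\in X^*(L)$, the fibre square~\eqref{eq-def-flag-space} gives $\Lscr_Y(\eta_\omega)=\pi_{Y/X}^*\Vscr_X(\eta_\omega)=\pi_{Y/X}^*\omega$, so $\Lscr_Y(\lambda_n)=\Lscr_Y(\lambda_0)\otimes\pi_{Y/X}^*\omega^{\otimes n}$. The restriction of $\Lscr_Y(\lambda_0)$ to a fibre of $\pi_{Y/X}$ is the line bundle attached to $\iota^*\lambda_0$ on $L/B_L\cong GL(3)/B$, which is ample because $\iota^*\lambda_0$ is regular and $I$-dominant ($\langle\iota^*\lambda_0,(\ee_1-\ee_2)^\vee\rangle=p^2-p-1>0$ and $\langle\iota^*\lambda_0,(\ee_2-\ee_3)^\vee\rangle=1>0$ for $p\geq2$); thus $\Lscr_Y(\lambda_0)$ is $\pi_{Y/X}$-ample. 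As $\pi_{Y/X}:Y\to X=\Acal_{3,K}$ is projective, $X$ is quasi-projective, and $\omega$ is ample on $X$ — it is the restriction of the ample Hodge bundle on the projective minimal compactification $\Acal_{3,K}^{\min}$, and a very ample sheaf restricts to a very ample one along a locally closed immersion — the standard twisting lemma for relatively ample sheaves (e.g.\ EGA~II, 4.6.13) shows $\Lscr_Y(\lambda_n)=\Lscr_Y(\lambda_0)\otimes\pi_{Y/X}^*\omega^{\otimes n}$ is ample on $Y$ for $n\gg0$, proving (b). For (c): $Y$ is a nonempty scheme ($\pi_{Y/X}$ is surjective onto $X\neq\emptyset$), so for $n\gg0$ some power $\Lscr_Y(m\lambda_n)$ is very ample, hence globally generated, hence has a nonzero section, giving $\lambda_n\in C_Y$. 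Combined with (a), which gives $\lambda_n\notin C_{\Xcal_B}$, this yields the strict inclusion $C_{\Xcal_B}\subsetneq C_Y$ of \Prop~\ref{prop-counterexample}.

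\emph{The main obstacle.} The genuinely nontrivial step is (a)(i)--(ii): the explicit determination of the three codimension-one partial Hasse weights for the Borel-type datum on split $Sp(6)$, and the verification that the entire ray $\{\iota^*\lambda_n\}_{n\geq0}$ stays outside their cone — in particular pinning down which facet of $C_{\Sbt,w_0}$ contains the direction $\iota^*(\eta_\omega)$. Everything else (the reductions in (a) and all of (b)–(c)) is formal.
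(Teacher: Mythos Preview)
Your strategy coincides with the paper's in all three parts. For (b)–(c) it is essentially verbatim: the paper also argues that $\iota^*\lambda_0$ is $I$-dominant regular, hence $\Lscr_Y(\lambda_0)$ is relatively ample for $\pi_{Y/X}$, that $\omega$ is ample on $\Acal_{3,K}$ (Moret-Bailly), and then invokes the twisting lemma. For (a), the paper likewise reduces to $Sp(6)$ via \Lem~\ref{change} and identifies the cone as generated by three explicit vectors $v_1=(1,0,-p)$, $v_2=(1,-(p-1),-p)$, $v_3=-(p-1,p-1,p-1)$, the pullbacks along $\Psi$ of the fundamental weights; inverting this $3\times3$ matrix yields three defining inequalities, and the paper checks that $\iota^*\lambda_n$ fails the second one, $-pk_1+(p+1)k_2-k_3\geq0$, for all $n$ --- precisely because its coefficient vector annihilates $(1,1,1)$, as you correctly anticipated.

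Two remarks. First, your Picard-group argument that $C_{\tilde\Ycal}$ equals the cone spanned by the separating weights is a nice self-contained justification; the paper instead just cites \cite[\S5]{Goldring-Koskivirta-zip-flags}. Second, the only real gap in your write-up is that you stop at ``it remains to compute the three weights and check the inequality'': the paper actually carries this out, and since that computation \emph{is} the content of (a), your proof is an outline rather than a complete argument until those numbers are written down. (Minor slip: Goren's partial Hasse invariants are for Hilbert modular varieties, not the Siegel threefold.)
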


\begin{proof}
By Lemma~\ref{change}, part~\eqref{item-ex-no-zip-section} is equivalent to $\iota^*\lambda_n\notin C_{\Ycal}' $ for all $n$, where $C'_{\Ycal}$ is the cone attached to the group $G'$. Using the methods of \cite[\S5]{Goldring-Koskivirta-zip-flags}, we find that the cone $C_{\Ycal}'$ is generated by the three vectors $v_1=(1, 0, -p)$, $v_2=(1, -(p-1), -p)$ and $v_3=-(p-1, p-1, p-1)$, as these are the pullbacks of the three fundamental weights for $G'$ relative to $\Delta$ along the map $\Psi$ (\S\ref{sec-flag-space}). By inverting the $3 \times 3$ matrix whose columns are $v_1,v_2,v_3$, we find that $C'_{\Ycal} $ is the set of tuples $(k_1,k_2,k_3)$ satisfying:
\begin{equation}
\label{eq-sp6-ineq}
\begin{array}{rrll}
  k_1 & -(p+1)k_2 &+k_3  &\geq 0,
\\
 -pk_1 & +(p+1)k_2 & -k_3  &\geq 0, 
\\
  pk_1& &+k_3 & \geq 0. \end{array}
 \end{equation}
\noindent For all $n\geq 1$, $\iota^*\lambda_n$ fails to satisfy the second inequality in~\eqref{eq-sp6-ineq}; hence $\iota^*\lambda_n \notin C'_{\Ycal}$.

Consider~\ref{lemma-sp6-example}\eqref{item-ex-ample}. By Moret-Bailly \cite{Moret-Bailly-abelian-varieties-book}, $\omega$ is ample on $\Acal_{3,K}$. Since $\lambda_0$ is $I$-dominant and regular, it follows from the discussion surrounding Kempf's vanishing theorem \cf \cite[II, \Prop~4.4]{jantzen-representations} that the line bundle $\Lscr_Y(\lambda_0)$ is relatively ample for $Y \to \Acal_{3,K}$. The result now follows from the general lemma which says that, if $f:S_1\to S_2$ is a morphism of schemes, $\Mcal$ is an ample line bundle on $S_2$ and $\Ncal$ is an $f$-ample line bundle on $S_1$, then  $f^*\Mcal^a \otimes \Ncal$ is ample on $S_1$ for sufficiently large $a$ (take $S_1=Y$, $S_2=\Acal_{3,K}$, $\Mcal=\omega$ and $\Ncal=\Vscr(\lambda_0)$).      

Finally,~\eqref{item-ex-yes-section} follows from~\eqref{item-ex-ample} because
every ample line bundle has a positive power which is very ample, whence admits a nonzero global section.
\end{proof}
\subsection{Concluding Remarks}
\label{sec-conclusion}
Conjecture~\ref{conj} concerns equality of the cones $C_Y$ and $C_\Ycal$. 
But we also have the cones $C_{\Sbt} \subset C_\Ycal$ of sections pulled back from $\Sbt$ (\S\ref{sec-flag-space}, \Def~\ref{def-partial-hasse-schubert}). The cone $C_{\Sbt}$ is easily determined for all $G$.

The cases of Conjecture~\ref{conj} proved here all satisfy $C_\Ycal=C_Y=C_{\Sbt}$. Proposition~\ref{prop-counterexample} shows that $C_Y \neq C_{\Sbt}$ when $X=\Acal_{3,K}$. In fact, $C_{\Sbt} \neq C_\Ycal$ in this case; the cone $C_\Ycal$ is much more complicated. This leaves hope that $C_\Ycal=C_Y$ holds even though $C_Y \neq C_{\Sbt}$. 

In conclusion, a first step to prove Conjecture~\ref{conj} for more general groups \eg $G=Sp(2n)$, is to determine the cone $C_\Ycal$. This is the object of our forthcoming work.

\bibliographystyle{plain}
\bibliography{biblio_overleaf}

\begin{thebibliography}{10}

\bibitem{Andreatta-Goren-low-dimension}
F.~Andreatta and E.~Goren.
\newblock Hilbert modular varieties of low dimension.
\newblock In A.~Adolphson, F.~Baldassari, P~Berthelot, N.~Katz, and F.~Loeser,
  editors, {\em Geometric aspects of {D}work theory}, pages 113--175. Walter de
  Gruyter, 2004.

\bibitem{Boxer-thesis}
G.~Boxer.
\newblock {\em Torsion in the coherent cohomology of {S}himura varieties and
  {G}alois representations}.
\newblock PhD thesis, Harvard University, Cambridge, Massachusetts, USA, 2015.

\bibitem{Brunebarbe-Goldring-Koskivirta-Stroh-ampleness}
Y.~Brunebarbe, W.~Goldring, J.-S. Koskivirta, and B.~Stroh.
\newblock Ampleness of automorphic bundles on zip-schemes.
\newblock In preparation.

\bibitem{Diamond-Kassaei}
F.~Diamond and P.~Kassaei.
\newblock Minimal weights of hilbert modular forms in characteristic p.
\newblock arXiv:1612.08725.

\bibitem{Goldring-Koskivirta-Strata-Hasse}
W.~Goldring and J.-S. Koskivirta.
\newblock Strata {H}asse invariants, {H}ecke algebras and {G}alois
  representations.
\newblock Preprint, arXiv:1507.05032.

\bibitem{Goldring-Koskivirta-zip-flags}
W.~Goldring and J.-S. Koskivirta.
\newblock Zip stratifications of flag spaces and functoriality.
\newblock To appear in IMRN, arXiv:1608.01504.

\bibitem{Goren-partial-hasse}
E.~Goren.
\newblock Hasse invariants for {H}ilbert modular varieties.
\newblock {\em Israel J. Math.}, 122:157--174, 2001.

\bibitem{Green-Griffiths-Kerr-Mumford-Tate-Domains-book}
M.~Green, P.~Griffiths, and M.~Kerr.
\newblock {\em Mumford-{T}ate groups and domains: {T}heir Geometry and
  Arithmetic}, volume 183 of {\em Ann. of Math. Studies}.
\newblock Princeton U. Press, Princeton, NJ, 2012.

\bibitem{jantzen-representations}
J.~Jantzen.
\newblock {\em Representations of algebraic groups}, volume 107 of {\em Math.
  Surveys and Monographs}.
\newblock American Mathematical Society, Providence, RI, 2nd edition, 2003.

\bibitem{Kisin-Honda-Tate-theory-Shimura-varieties}
M.~Kisin.
\newblock Honda-{T}ate theory for {S}himura varieties.
\newblock Oberwolfach Report No. 39/2015, ``Reductions of Shimura varieties''.

\bibitem{Koskivirta-compact-hodge}
J.-S. Koskivirta.
\newblock Sections of the {H}odge bundle over {E}kedahl-{O}ort strata of
  {S}himura varieties of {H}odge type, 2014.
\newblock arXiv:1410.1317v2, To appear in J. Algebra.

\bibitem{Koskivirta-Wedhorn-Hasse}
J.-S. Koskivirta and T.~Wedhorn.
\newblock Generalized {H}asse invariants for {S}himura varieties of {H}odge
  type.
\newblock Preprint, arXiv:1406.2178.

\bibitem{Lan-Stroh-stratifications-compactifications}
K.-W. Lan and B.~Stroh.
\newblock Compactifications of subschemes of integral models of {S}himura
  varieties.
\newblock Preprint.

\bibitem{Lee-newton-strata-nonempty}
D.~Lee.
\newblock Non-emptiness of {N}ewton strata of {S}himura varieties of {H}odge
  type.
\newblock Preprint, 2014.

\bibitem{MadapusiHodgeTor}
K.~Madapusi.
\newblock Toroidal compactifications of integral models of {S}himura varieties
  of {H}odge type.
\newblock Preprint.

\bibitem{Moonen-Wedhorn-Discrete-Invariants}
B.~Moonen and T.~Wedhorn.
\newblock Discrete invariants of varieties in positive characteristic.
\newblock {\em IMRN}, 72:3855--3903, 2004.

\bibitem{moonen-gp-schemes}
Ben Moonen.
\newblock Group schemes with additional structures and {W}eyl group cosets.
\newblock In {\em Moduli of abelian varieties ({T}exel {I}sland, 1999)}, volume
  195 of {\em Progr. Math.}, pages 255--298. Birkh\"auser, Basel, 2001.

\bibitem{Moret-Bailly-abelian-varieties-book}
L.~Moret-Bailly.
\newblock {\em Pinceaux de Vari\'et\'es ab\'eliennes}.
\newblock Ast\'erisque. Soc. Math. France, 1985.

\bibitem{Nie-Newton-EO}
S.~Nie.
\newblock Fundamental elements of an affine weyl group.
\newblock {\em Math. Ann.}, 362:485--499, 2015.

\bibitem{Ogus-height-strata-K3}
A.~Ogus.
\newblock Singularities of the height strata in the moduli of {K}3 surfaces.
\newblock In {\em Moduli of abelian varieties}, volume 195, pages 325--343,
  2001.

\bibitem{Oort-stratification-moduli-space-abelian-varieties}
Frans Oort.
\newblock A stratification of a moduli space of abelian varieties.
\newblock In {\em Moduli of abelian varieties ({T}exel {I}sland, 1999)}, volume
  195 of {\em Progr. Math.}, pages 345--416. Birkh\"auser, Basel, 2001.

\bibitem{Pink-Wedhorn-Ziegler-zip-data}
R.~Pink, T.~Wedhorn, and P.~Ziegler.
\newblock Algebraic zip data.
\newblock {\em Doc. Math.}, 16:253--300, 2011.

\bibitem{PinkWedhornZiegler-F-Zips-additional-structure}
R.~Pink, T.~Wedhorn, and P.~Ziegler.
\newblock ${F}$-zips with additional structure.
\newblock {\em Pacific J. Math.}, 274(1):183--236, 2015.

\bibitem{ZhangEOHodge}
C.~Zhang.
\newblock Ekedahl-{O}ort strata for good reductions of {S}himura varieties of
  {H}odge type.
\newblock To appear in Canad. J. Math, arXiv:1312.4869.

\end{thebibliography}
\end{document}